\tikzset{cross/.style={cross out, draw=black, minimum size=2*(#1-\pgflinewidth), inner sep=0pt, outer sep=0pt},
cross/.default={1pt}}
\newcommand{\diamondleft}[4]{[  ]
	\pgfmathsetmacro\n{(#4 + #3 - #1 - #2) / 2};
	\draw (#1,#2) -- (#3 -\n , #4 - \n) -- (#3,#4) -- (#1+\n ,#2 +\n) -- (#1,#2);
}
\newcommand{\diamondup}[4]{[  ]
	\pgfmathsetmacro\n{(#2 + #3 - #1 - #4) / 2};
	\pgfmathsetmacro\m{(- #3 + \n + #1)};
	\draw (#1,#2) -- (#3 -\n , #4 + \n) -- (#3,#4) -- (#1+\n ,#2-\n) -- (#1,#2);
}
\newcommand{\diamondleftlabel}[6]{[  ]
	\pgfmathsetmacro\n{(#4 + #3 - #1 - #2) / 2};
	\pgfmathsetmacro\lx{(#3 -#1)/2 + #1};
	\pgfmathsetmacro\ly{(#2 + \n - #4 +\n)/2 + #4 - \n};
	\draw[#5] (#1,#2) -- (#3 -\n , #4 - \n) -- (#3,#4) -- (#1+\n ,#2 +\n) -- (#1,#2);
	\node at (\lx,\ly) {#6};
}
\newcommand{\diamonduplabel}[6]{[  ]
	\pgfmathsetmacro\n{(#2 + #3 - #1 - #4) / 2};
	\pgfmathsetmacro\lx{(#3 -\n #1+\n)/2 + #3 -\n };
	\pgfmathsetmacro\ly{(#2 - #4)/2 + #4};
	\draw[#5] (#1,#2) -- (#3 -\n , #4 + \n) -- (#3,#4) -- (#1+\n ,#2-\n) -- (#1,#2);
	\node at (\lx,\ly) {#6};
}
\tikzset{2Dpoint/.style ={circle,inner sep=0pt,minimum size=3pt,fill=black}}
\tikzset{bold2Dpoint/.style ={inner sep=0pt,minimum size=4pt,fill=black}}
\tikzset{vertex/.style ={circle,inner sep=0pt,minimum size=4pt}}
\tikzset{cross/.pic = {
    \draw (-#1,0) -- (#1,0);
    \draw (0,-#1) -- (0, #1);
    }
}
\tikzstyle{path} = [color=black,opacity=.15,line cap=round, line join=round, line width=6pt]
\newcommand{\drawCoordOrigin}[2]{
		\draw[->,thick] (#1,#2) -- (#1 + 6,#2);
		\draw[->,thick] (#1,#2) -- (#1,#2 - 6);
		\draw[thick] (#1,#2) node[label=above left:{0}] {} pic {cross=2pt};
		\foreach \i in {1,...,5}{
			\draw[thick] (#1,#2 - \i) node[label=left:{\i}] {} pic {cross=2pt};
			\draw[thick] (#1 + \i,#2) node[label=above:{\i}] {} pic {cross=2pt};
		}
}
\definecolor{italyGreen}{RGB}{0, 146, 70}
\definecolor{italyRed}{RGB}{206, 43, 55}
\DeclarePairedDelimiterX{\set}[1]{\{ }{ \} }{\setargs{#1}}
\NewDocumentCommand{\setargs}{>{\SplitArgument{1}{;}}m}
{\setargsaux#1}
\NewDocumentCommand{\setargsaux}{mm}
{\IfNoValueTF{#2}{#1} {#1\mid\mathopen{}#2}}%
\DeclarePairedDelimiterX{\abs}[1]{\lvert}{\rvert}{#1}
\DeclarePairedDelimiterX{\ceil}[1]{\lceil}{\rceil}{#1}
\DeclarePairedDelimiterX{\floor}[1]{\lfloor}{\rfloor}{#1}
\DeclarePairedDelimiterX{\norm}[1]{\lVert}{\rVert}{#1}
\DeclarePairedDelimiterX{\infnorm}[1]{\lVert}{\rVert_\infty}{#1}
\theoremstyle{plain}
\newtheorem{lemma}{Lemma}
\newtheorem{proposition}[lemma]{Proposition}
\newtheorem{observation}[lemma]{Observation}
\theoremstyle{definition}
\newtheorem{definition}[lemma]{Definition}
\crefname{observation}{Observation}{Observations}
\crefname{claim}{Claim}{Claims}
\let\cref@old@stepcounter\stepcounter
\def\stepcounter#1{%
  \cref@old@stepcounter{#1}%
  \cref@constructprefix{#1}{\cref@result}%
  \@ifundefined{cref@#1@alias}%
    {\def\@tempa{#1}}%
    {\def\@tempa{\csname cref@#1@alias\endcsname}}%
  \protected@edef\cref@currentlabel{%
    [\@tempa][\arabic{#1}][\cref@result]%
    \csname p@#1\endcsname\csname the#1\endcsname}}
\newcommand{\DSP}[1]{$#1$-DSP}
\newcommand{\kDSP}{\DSP{k}}
\newcommand{\DPD}{\textsc{$p$-Disjoint Paths on DAGs}}
\newcommand{\NN}{\mathds{N}}
\newcommand{\RR}{\mathds{R}}
\newcommand{\yes}{\emph{yes}}
\newcommand{\varmarks}{\textnormal{marks}}
\newcommand{\varlabels}{\textnormal{labels}}
\newcommand*{\defeq}{:=}
\newcommand{\R}{\mathds{R}}
\newcommand{\calP}{\mathcal{P}}
\newcommand{\cross}{\mathcal{M}}
\newcommand{\crossing}{\mathcal{C}}
\DeclareMathOperator{\true}{true}
\DeclareMathOperator{\false}{false}
\DeclareMathOperator*{\argmin}{arg\,min}
\DeclareMathOperator*{\argmax}{arg\,max}
\DeclareMathOperator{\dist}{dist}
\newcommand{\dis}{\operatorname{dist}}
\DeclareMathOperator{\start}{start}
\DeclareMathOperator{\en}{end}
\DeclareMathOperator{\setp}{set}
\DeclareMathOperator{\iin}{in}
\DeclareMathOperator{\oout}{out}
\DeclareMathOperator{\segmentEnds}{\mathcal{T}}
\DeclareMathOperator{\marbles}{Ends}
\newcommand{\patharea}[2]{#1 \mathbin{\diamond} #2}
\DeclareRobustCommand{\pos}[1]{{\vv{#1}}}
\newcommand{\wilog}{without loss of generality}
\newcommand{\Wilog}{Without loss of generality}
\newcommand{\ie}{i.\,e.,}
\newcommand{\eg}{e.\,g.\;}
\newcommand{\mpath}{marble path}
\newcommand{\probDef}[4]{
\begin{center}   
	\fbox{~\begin{minipage}{.95\textwidth}
		\vspace{2pt} 

		\noindent
		\normalsize\textsc{#1}
		
		\vspace{4pt}
		\setlength{\tabcolsep}{3pt}
		\renewcommand{\arraystretch}{1.0}
		\begin{tabularx}{\textwidth}{@{}lX@{}}
			\normalsize\textbf{Input:} 	& \normalsize#2 \\
			\normalsize\textbf{#4:} 	& \normalsize#3
		\end{tabularx}
	\end{minipage}}
\end{center}
}
\newcommand{\optProb}[3]{\probDef{#1}{#2}{#3}{Task}}
\newcommand{\decProb}[3]{\probDef{#1}{#2}{#3}{Question}}
\tikzset{nodesmall/.style args={#1}{draw,circle,label=above left:{#1}}}
\tikzstyle{small}=[inner sep=2pt]
\tikzset{marbler/.style ={circle,inner sep=0pt,minimum size=5pt,shade,top color=white, bottom color=red}}
\tikzset{marbleb/.style ={circle,inner sep=0pt,minimum size=5pt,shade,top color=white, bottom color=blue}}
\newcommand{\tuaddress}{Technische Universität Berlin, Faculty IV, Algorithmics and Computational Complexity, Germany}
\title{Using a geometric lens to find $k$ disjoint shortest paths\footnote{This work was started at the research retreat of the group \emph{Algorithmics and Computational Complexity} in September 2019, held in Schloss Neuhausen (Prignitz).}}
\author{Matthias~Bentert}
\author{André~Nichterlein}
\author{Malte~Renken\thanks{Supported by the DFG project MATE (NI 369/17).}}
\author{Philipp~Zschoche}
\affil{\tuaddress}
\affil{\{matthias.bentert, andre.nichterlein, m.renken, zschoche\}@tu-berlin.de}
\date{}
\begin{document}
\maketitle

\begin{abstract}
Given an undirected $n$-vertex graph and $k$~pairs of terminal vertices~$(s_1,t_1), \ldots, (s_k,t_k)$, the \textsc{$k$-Disjoint Shortest Paths} (\kDSP) problem asks
whether there are $k$ pairwise vertex-disjoint paths~$P_1, \allowbreak\ldots, P_k$ such that~$P_i$ is a shortest~$s_i$-$t_i$-path for each~$i \in [k]$.
Recently, \citeauthor{Loc21} [SODA~'21] provided an algorithm that solves \kDSP{} in~$n^{O\left(k^{5^k}\right)}$ time, answering a~20-year old question about the computational complexity of \kDSP{} for constant~$k$.
On the one hand, we present an improved $n^{O(k!k)}$-time algorithm based on a novel geometric view on this problem.
For the special case~$k=2$ on $m$-edge graphs, we show that the running time can be further reduced to~$O(nm)$ by small modifications of the algorithm and a refined analysis.
On the other hand, we show that \kDSP{} is W[1]-hard with respect to~$k$, showing that the dependency of the degree of the polynomial running time on the parameter~$k$ is presumably unavoidable.
\end{abstract}

\section{Introduction}
The \textsc{$k$-Disjoint Paths} problem is a fundamental and well-studied combinatorial problem.
Given an undirected graph~$G$ on $n$ vertices and~$k$ terminal pairs $(s_i, t_i)_{i \in [k]}$, the question is whether there are pairwise disjoint%
\footnote{We only consider the vertex-disjoint setting to which the edge-disjoint version can be reduced to. We will also see later that the hardness results also hold for the edge-disjoint version.} 
$s_i$-$t_i$-paths~$P_i$ for each~$i \in [k]$.
The problem was shown to be NP-hard by~\citet{Kar75} when~$k$ is part of the input.
On the positive side, \citet{RS95} provided an algorithm running in~$O(n^3)$ time for any constant~$k$.
Later, \mbox{\citet{KKR12}} improved the running time to~$O(n^2)$, again for fixed~$k$.
On directed graphs, in contrast, the problem is NP-hard even for~$k=2$~\cite{FHW80}.
However, on directed acyclic graphs, the problem becomes again polynomial-time solvable for constant~$k$~\cite{FHW80} and linear-time solvable for~$k=2$~\cite{Tho12}.

Focusing on the undirected case, we study the problem variant where all paths in the solution have to be shortest paths. 
This variant was introduced by \citet{Eil98}.

\optProb{$k$ Disjoint Shortest Paths (\kDSP)}
{A graph~$G=(V,E)$ and $k \in \NN$~pairs~$(s_i,t_i)_{i \in [k]}$ of vertices.}
{Find $k$ disjoint paths $P_i$ such that $P_i$ is a shortest~$s_i$-$t_i$-path for each~$i \in [k]$.}

Throughout the paper, we assume that the input graph is connected.
\citet{Eil98} showed the NP-hardness of \kDSP{} when~$k$ is part of the input.
Moreover, \citeauthor{Eil98} provided a dynamic-programming based~$O(n^8)$-time algorithm for \DSP{2}.
This algorithm for \DSP{2} works also for positive edge lengths.
Recently, \mbox{\citet{GKW19}} and \citet{KS19} independently extended this result by providing polynomial-time algorithms for the case that the edge lengths are non-negative.
Very recently, \citet{Akh20} presented an algorithm solving~\DSP{2} in $O(n^6)$ time for unweighted graphs and in~$O(n^7)$ for positive edge lengths. 
As for directed graphs, \citet{BK17} provided a polynomial-time algorithm for strictly positive edge length.
Note that allowing zero-length edges generalizes \textsc{$2$-Disjoint Path} on directed graphs, which is NP-hard~\cite{FHW80}. 
Extending the problem to finding two disjoint $s_i$-$t_i$-paths of minimal total length (in undirected graphs), \citet{BH19} provided a randomized algorithm running in~$O(n^{11})$ time.

The existing algorithms for \DSP{2} are based on dynamic programming with tedious case distinctions.
We provide a new algorithm using a simple and elegant geometric perspective: %
\begin{restatable}
{theorem}{dsptwoThm}
\label{prop:2dsp}
\DSP{2} can be solved in $O(nm)$ time.
\end{restatable}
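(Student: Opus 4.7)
The plan is to instantiate the algorithm of \cref{thm:main-thm} at $k = 2$ and then exploit the particularly simple two-dimensional structure that the geometric lens provides in this case, in order to sharpen both the state space and the per-state cost of the underlying dynamic program. For two paths the relevant geometric object is a monotone lattice walk in the rectangle $[0,\ell_1] \times [0,\ell_2]$ with $\ell_i := \dist(s_i, t_i)$, where the entry at grid point $(i,j)$ records the pair $(u,v) \in V \times V$ reached by $P_1$ after $i$ edges and by $P_2$ after $j$ edges. The first step I would carry out is to argue that a DP state can be encoded by such a pair $(u,v)$ together with its layer $i + j$, cutting the state count from the high-degree polynomial of the general case down to $O(n^2)$.

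After an initial pair of BFS sweeps from $s_1$ and $s_2$ to precompute distance layers in $O(n+m)$ time, I would then process the DP in order of increasing $i + j$. A transition corresponds to advancing one of the two paths along a single edge $uu' \in E$ with $\dist(s_1,u') = \dist(s_1,u) + 1$ (or the analogous edge $vv'$ for $P_2$), and each such transition can be validated in constant time using the precomputed layers; summed over all states and incident edges the raw transition work is $O(nm)$.

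The hard part, and the place where the ``further refined analysis'' promised by the proposition must enter, is that the endpoint pair $(u,v)$ alone does not certify that the two prefixes leading to it are vertex-disjoint: different DP predecessors of $(u,v)$ may correspond to prefixes sharing an earlier vertex, so naive endpoint tracking would be unsound. The geometric picture for $k=2$ should make it possible to augment each endpoint pair with a compact ``interleaving witness'' recording how the two prefix paths are currently woven together, and I expect the main obstacle to be proving that at most $O(n)$ such witnesses per endpoint pair suffice, and that the full witness set can be maintained in $O(m)$ amortized work per layer. Once this combinatorial lemma is established, the total running time amounts to $O(n^2)$ augmented states times $O(m)$ work per state, namely $O(n^2 m)$, as claimed.
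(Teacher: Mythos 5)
There is a genuine gap, and it sits exactly where you point: the ``interleaving witness'' lemma that you defer is the entire content of the proof, and you neither state it precisely nor prove it. A DP over endpoint pairs $(u,v)$ ordered by $i+j$ is unsound for the reason you give, and the reason it cannot be repaired by a generic witness is structural: $P_1$ is monotone only in the $\dist(s_1,\cdot)$-coordinate and $P_2$ only in the $\dist(s_2,\cdot)$-coordinate, so the two prefixes are not advancing through a common DAG and there is no single topological order that prevents a newly added vertex of one prefix from lying on the interior of the other. (This is precisely what distinguishes the situation from 2-Disjoint Paths on a DAG, where the endpoint-pair DP \emph{is} sound.) Your accounting is also internally inconsistent: $O(n^2)$ endpoint pairs each carrying $O(n)$ witnesses gives $O(n^3)$ augmented states, not $O(n^2)$, so even granting the conjectured lemma the claimed $O(n^2m)$ bound does not follow from the arithmetic as written.

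The paper closes this gap by a completely different mechanism: it never augments the DP state, but instead uses the 2D geometry to make disjointness automatic everywhere except on a region where both paths are simultaneously monotone in \emph{both} coordinates. Concretely, it guesses a constant number of special vertices (the first/last vertices of the crossing, or the vertex $\delta$ in the noncrossing case), each with $O(n)$ choices; \cref{lem:four-diamonds} and \cref{lem:delta_disjointness} show that outside the crossing the two subpaths are confined to pairwise disjoint rectangles of the form $\patharea{\pos{x}}{\pos{y}}$, so those pieces can be found by independent breadth-first searches; and on the crossing itself both paths are strictly monotone in both coordinates, so the graph restricted to that region orients into a DAG and the $O(nm)$-time algorithm of \citet{PS78} for \textsc{2-Disjoint Paths} on DAGs applies. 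The total is $O(n)$ guesses times $O(nm)$ per guess, i.e.\ $O(n^2m)$. If you want to salvage your route, the missing lemma would essentially have to re-derive this geometric decomposition (the witness would collapse to ``which side of the crossing each prefix is on''), at which point you are reproducing the paper's argument rather than giving an alternative one.
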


Whether or not \kDSP{} for constant~$k \ge 3$ is polynomial-time solvable was first asked by \citet{Eil98} over 20 years ago and was posed again as a research challenge in 2019~\cite[open problem 4.6]{FMSZ19}.
Recently, \citet{Loc21} settled this long standing open question by showing that \kDSP{} can be solved in~$n^{O(k^{5^k})}$ time.
Using our geometric approach, we provide an improved~$O(k \cdot n^{16k \cdot k! +k + 1})$-time algorithm. 
\begin{restatable}
		{theorem}{algorithmThm}
	\label{thm:main-thm}
	\kDSP{} can be solved in~$O(k \cdot n^{16k \cdot k! +k + 1})$ time.
\end{restatable}

We describe the basic idea of our algorithms and the new geometric tools in \cref{sec:poly-time}. %
In \cref{sec:2D-geometry}, we formalize these geometric tools for two paths and prove \cref{prop:2dsp}.
In \cref{sec:towers}, we lift these arguments to $k > 2$ paths.
In \cref{sec:the-algorithm}, we present our algorithm for \cref{thm:main-thm} and prove its correctness.

Finally, in \cref{sec:eth}, 
we show that \kDSP{} is W[1]-hard with respect to~$k$.
Hence, under standard assumptions from parameterized complexity, there is no algorithm with running time~$f(k) n^{O(1)}$ for any function~$f$.
Thus, polynomial-time algorithms where~$k$ does not appear in the exponent (as the~$O(n^2)$-time algorithm for \textsc{$k$-Disjoint Path} for any constant~$k$~\cite{RS95,KKR12}) are unlikely to exist for \kDSP.
Furthermore, under the Exponential-Time Hypothesis (ETH), we show that there is no algorithm with running time~$f(k)\cdot n^{o(k)}$ for any computable function~$f$.

\begin{restatable}{proposition}{hardnessThm}
	\label{thm:eth}
	\kDSP{} is W[1]-hard with respect to~$k$.
	Moreover, assuming the ETH, there is no $f(k)\cdot n^{o(k)}$-time algorithm for \kDSP.
\end{restatable}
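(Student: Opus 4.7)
The approach is a parameterized reduction from \textsc{Multicolored Clique} (equivalently \textsc{Grid Tiling}), a problem that is $W[1]$-hard with respect to the number of color classes $k$ and, under ETH, has no algorithm of running time $f(k)\cdot n^{o(k)}$ for any computable $f$. Starting from an instance on a graph $G$ with classes $V_1,\dots,V_k$ of size $n$, the plan is to build a \kDSP instance on a graph $H$ of size $\poly(n,k)$ with $O(k)$ terminal pairs whose solutions correspond to multicolored cliques. Because the new parameter is linear in $k$ and $|V(H)|$ is polynomial in $n$, both the $W[1]$-hardness and the $n^{o(k)}$ lower bound in \cref{thm:eth} transfer.

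The graph $H$ would be laid out as a $k\times k$ array of \emph{cell gadgets} indexed by pairs $(i,j)$, together with $2k$ terminal pairs: row pairs $(s_i^R,t_i^R)$ for $i\in[k]$ and column pairs $(s_j^C,t_j^C)$ for $j\in[k]$. Within cell $(i,j)$, the row path $P_i^R$ must pick one of $|V_i|$ horizontal tracks corresponding to a vertex $u\in V_i$, and the column path $P_j^C$ picks one of $|V_j|$ vertical tracks corresponding to a vertex $v\in V_j$; the internal layout of the cell is arranged so that a horizontal track for $u$ and a vertical track for $v$ are vertex-disjoint if and only if $uv\in E(G)$. The cells are stitched together by uniform-length corridors that, together with a selector attached to $s_i^R$ (resp.~$s_j^C$), force each path to keep the \emph{same} choice across all $k$ of its cells, so that $P_i^R$ consistently encodes a single $u_i\in V_i$.

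The most delicate part of the plan is the engineering of the cell gadget. For \emph{every} pair of choices the two crossing tracks must have exactly the same total length (otherwise shortness would already rule out some valid combinations), yet forbidden pairs must be metrically impossible to realize disjointly within the prescribed shortest-path budget. Achieving both goals simultaneously is typically done by padding tracks of different shapes with matching auxiliary detours and embedding the entire grid inside a rigid distance-preserving scaffold, so that $d_H(s_i^R,t_i^R)$ (and similarly for the column pairs) is realized \emph{only} by a straight cell-by-cell traversal of the intended row (resp.~column). The most error-prone step will be to rule out ``global rerouting'', where a path would abandon its row or column in order to steal segments from another's territory; this calls for careful arithmetic on the edge lengths.

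Once the gadget is in place, the forward direction of the reduction is routine: a multicolored clique $\{u_1,\dots,u_k\}$ in $G$ yields disjoint shortest paths in $H$ by letting each $P_i^R$ (resp.\ $P_j^C$) follow the track of $u_i$ (resp.\ $u_j$) in every cell of its row (resp.\ column). The backward direction reads off the choice made by each path in each cell, invokes the consistency corridors to identify a single $u_i\in V_i$ per path, and uses the cell-disjointness condition to certify that every pair $\{u_i,u_j\}$ is an edge of $G$. Since the resulting instance has $2k=O(k)$ terminal pairs and size polynomial in $n$, the $W[1]$-hardness and the conditional $n^{o(k)}$ lower bound both follow.
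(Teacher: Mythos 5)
Your high-level plan is the right one and matches the paper's in outline: a parameterized reduction from \textsc{Multicolored Clique} to \kDSP{} with $2k$ terminal pairs (one ``row'' and one ``column'' pair per color class), arranged as a grid of crossing horizontal and vertical paths, so that both the W[1]-hardness and the ETH-based $n^{o(k)}$ bound transfer because the new parameter is $O(k)$ and the instance size is polynomial. However, the proposal stops exactly where the proof begins. The entire content of the argument is the gadget you defer: a construction in which (a) every candidate row/column path is a shortest path of the same length, (b) two crossing paths share a vertex if and only if the corresponding vertex pair is forbidden (same color or non-adjacent), and (c) the shortest-path metric rules out any ``global rerouting.'' You explicitly flag (c) as the error-prone step and do not carry it out, and you likewise only assert, rather than construct, the ``selector'' and ``consistency corridors'' that are supposed to force each path to make the same choice in all $k$ of its cells. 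As written, there is no verifiable reduction.

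It is also worth noting that the paper's construction is substantially simpler than the cell-gadget architecture you envision, and this simplicity is what makes the verification go through. There is no per-cell track selection and hence no consistency mechanism at all: for each vertex $v$ of $G$ one builds a single horizontal path $P_v = s_{c(v)} p_v^1 \cdots p_v^n t_{c(v)}$ spanning the entire row of its color and a single vertical path $Q_v$ spanning the entire column, so that choosing a shortest $s_a$-$t_a$-path already amounts to choosing one vertex of color $a$ globally. Forbidden pairs are encoded by merging the crossing vertices $p_{v_i}^j$ and $q_{v_j}^i$ whenever $c(v_i)=c(v_j)$ or $\{v_i,v_j\}\notin E$; in particular the same-color merges force the row choice and the column choice for color $a$ to coincide, which is exactly the job your consistency corridors were meant to do. Rerouting is excluded by an invariant preserved under merging, namely $\dist(s_a,p_{v_i}^j)\ge n+j$ (and symmetrically for the $q$'s), combined with subdividing the first and last edge of every $P_v$ and $Q_v$ by $n$ vertices so that no shortest path can pass through a foreign terminal; a short induction then shows that the shortest $s_a$-$t_a$-paths are \emph{exactly} the paths $P_v$ with $c(v)=a$. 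If you want to complete your write-up, supplying this concrete construction and the distance invariant (or an equivalent one for your cell gadgets) is the missing step.
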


\subparagraph*{Preliminaries.} %
We set~$\NN := \{0,1,2,\ldots,\}$ and $[n] := \{1,2,\dots,n\}$.
We always denote by~$G = (V,E)$ a graph (undirected and connected unless explicitly stated otherwise) and by~$n$ and~$m$ the number of vertices and edges in~$G$, respectively.
A \emph{path} of length $\ell \geq 0$ in a graph~$G$ is a sequence of distinct vertices $v_0v_1\dots v_\ell$ such that each pair $v_{i-1}, v_i$ is connected by an edge in~$G$.
When no ambiguity arises, we do not distinguish between a path and its set of vertices.
The first and last vertex $v_0$ and $v_\ell$ of a path~$P$ are called the \emph{end vertices} or \emph{ends} of $P$ and are denoted by $s_P$ and $t_P$.
We also say that $P$ is a path \emph{from} $v_0$ \emph{to} $v_\ell$, a path \emph{between}~$v_0$ and~$v_\ell$, or a~$v_0$-$v_\ell$-path.
For~$v,w \in P$, we denote by~$P[v,w]$ the subpath of~$P$ with end vertices~$v$ and~$w$.
For two vertices~$v,w$, we denote the length of a shortest~$v$-$w$-path in~$G$ by~$\dist_G(v,w)$ or $\dist(v,w)$ if the graph~$G$ is clear from the context.
If for all $i \in [k]$ there is a path $P_i$ that is a shortest~$s_i$-$t_i$-path and disjoint with $P_j$ for all $j \in[k]\setminus\{i\}$, then we say that the paths~$(P_i)_{i \in [k]}$ are a solution for an instance $(G,(s_i,t_i)_{i\in[k]})$ of \kDSP.

\section{The Key Concepts behind our Polynomial-Time Algorithm} \label{sec:poly-time}

In this section, we describe our approach to solve \kDSP{} in polynomial-time for any fixed~$k$.
As a warm-up, we start with sketching an algorithm for \DSP{2} based on the same approach. 

\subparagraph*{Solving \DSP{2} in the plane.}

Before describing the algorithm, we show the central geometric idea behind it.  
Recall that we want to find two shortest paths~$P_1$ and~$P_2$ from~$s_1$ to~$t_1$ and from~$s_2$ to~$t_2$, respectively.
We now arrange the vertices on a 2-dimensional grid where the first coordinate of each vertex is the distance to~$s_1$ and the second coordinate the distance to~$s_2$; see left side of \cref{fig:diamond-introduction} for an example graph with the corresponding coordinates and the right side for an arrangement of the vertices in a grid with a continuous drawing of the paths (drawing straight lines between points occurring in the paths).
Clearly, with two breadth-first searches from~$s_1$ and~$s_2$, we can compute the coordinates of all vertices in linear time.
Note that there might be multiple vertices with the same coordinates.
However, at most one vertex per coordinate can be part of a shortest $s_1$-$t_1$- or $s_2$-$t_2$-path.
\begin{figure}[t]
	\centering
	\small

	\begin{tikzpicture}[scale=.6]
		\begin{scope}[xshift=-2.5cm,yshift=-6cm,yscale=1.25]
			\node[circle,inner sep=0pt,minimum size=3pt,fill=black,label=left:{\textbf{\boldmath{$s_1$}}},label={[shift={(0,-.1)}]{$\binom{0}{3}$}}] (s1) at (0,2){};
			\node[circle,inner sep=0pt,minimum size=3pt,fill=black,label={[shift={(-.15,-.1)}]{$\binom{1}{2}$}}] (a1) at (2,2){};
			\node[circle,inner sep=0pt,minimum size=3pt,fill=black,label={[shift={(0,-.1)}]{$\binom{2}{2}$}}] (a2) at (3,2.75){};
			\node[circle,inner sep=0pt,minimum size=3pt,fill=black,label={[shift={(.15,-.1)}]{$\binom{2}{1}$}}] (a3) at (4,2){};
			\node[circle,inner sep=0pt,minimum size=3pt,fill=black,label=right:{\textbf{\boldmath{$s_2$}}},label={[shift={(.1,-.1)}]{$\binom{3}{0}$}}] (s2) at (6,2){};
			
			\node[circle,inner sep=0pt,minimum size=3pt,fill=black,label=below:{\textbf{\boldmath{$t_1$}}},label={[shift={(-.1,-.1)}]{$\binom{5}{4}$}}] (t1) at (-1.5,.5){};
			\node[circle,inner sep=0pt,minimum size=3pt,fill=black,label={[shift={(-.2,-.1)}]{$\binom{4}{3}$}}] (b0) at (0,.5){};
			\node[circle,inner sep=0pt,minimum size=3pt,fill=black,label={[shift={(-.2,-.1)}]{$\binom{3}{3}$}}] (b1) at (2,.5){};
			\node[circle,inner sep=0pt,minimum size=3pt,fill=black,label=below:{$\binom{2}{2}$}] (b2) at (3,1.25){};
			\node[circle,inner sep=0pt,minimum size=3pt,fill=black,label={[shift={(.2,-.1)}]{$\binom{3}{3}$}}] (b3) at (4,.5){};
			\node[circle,inner sep=0pt,minimum size=3pt,fill=black,label={[shift={(0,-.1)}]{$\binom{4}{4}$}}] (b4) at (6,.5){};
			\node[circle,inner sep=0pt,minimum size=3pt,fill=black,label=below:{\textbf{\boldmath{$t_2$}}},label={[shift={(0,-.1)}]{$\binom{5}{5}$}}] (t2) at (7.5,.5){};	
			
			\node[circle,inner sep=0pt,minimum size=3pt,fill=black,label={[shift={(0,-.1)}]{$\binom{5}{2}$}}] (c1) at (1.5,4.25){};
			\node[circle,inner sep=0pt,minimum size=3pt,fill=black,label={[shift={(0,-.1)}]{$\binom{4}{1}$}}] (c2) at (4.5,4.25){};	
			\foreach \x/\y in {s1/a1, a1/a2, a2/a3, a1/a2, a3/s2, a1/b2, a2/b1, a1/a3, a2/b3, b2/a3, t1/b0, b0/b1, b1/b2,b2/b3,b3/b4,b4/t2, b0/c1, c1/c2, c2/s2}{
				\draw (\x) -- (\y);
			} 
			\begin{pgfonlayer}{background}
				\draw[path,draw=red] (s1.center) -- (a1.center) -- (a2.center) -- (b1.center) -- (t1.center);
				\draw[path,draw=blue,dash pattern=on 4pt off 8pt] (s2.center) -- (a3.center) -- (b2.center) -- (b3.center) -- (t2.center);
			\end{pgfonlayer}
		\end{scope}     
		\small
		\begin{scope}[xshift=7cm]
			\foreach \x in {0,1,...,6}{
				\foreach \y in {0,-1,...,-6}{
					\node[cross, gray] at (\x,\y){};
				}
			}
			\drawCoordOrigin{0}{0}
		
			\foreach \x/\y in {0/-3,1/-2,2/-2,2/-1,3/0,4/-1, 5/-2,3/-3,5/-4,4/-3, 5/-5, 4/-4}{
				\node[2Dpoint] at (\x,\y){};
			}

			\filldraw (0,-3) circle node[right] {\textbf{\boldmath{$s_1$}}} --
					(2,-1) circle node[right] {} --
					(5,-4) circle node[right] {\textbf{\boldmath{$t_1$}}} --
					(3,-6) circle node[right] {} -- (0,-3);
			\filldraw(1.5,-1.5) circle node[left] {} --
					(3,0) circle node[below,yshift=-3pt] {\textbf{\boldmath{$s_2$}}} --
					(6.5,-3.5) circle node[right] {} --
					(5,-5) circle node[below] {\textbf{\boldmath{$t_2$}}} -- (1.5,-1.5);
			\diamonduplabel{3}{0}{5}{-5}{fill=blue, opacity=.2}{};
			\diamondleftlabel{0}{-3}{5}{-4}{fill=red, opacity=.2}{};
			\draw[red, ultra thick] (0,-3) --(1,-2) --(2,-2)--(3,-3)--(4,-3)--(5,-4);
			\draw[blue,very thick, dashed] (3,0) --(2,-1) --(2,-2)--(5,-5);
		\end{scope} 
	\end{tikzpicture} 
	\hspace{-0.4cm}
	\caption{
		\emph{Left side:} A graph with distinguished vertices~$s_1,s_2,t_1,t_2$. The vertex coordinates are written next to the vertices and two shortest paths are highlighted.
		\emph{Right side:} 
		The 2D-arrangement of the vertices. 
		The red and blue rectangles spanned by~$s_1$ and~$t_1$ and by~$s_2$ and~$t_2$ contain the shortest $s_1$-$t_1$-path (solid red) and~$s_2$-$t_2$-path (dashed blue).
	}
	\label{fig:diamond-introduction}
	\vspace{-0.4cm}
\end{figure}
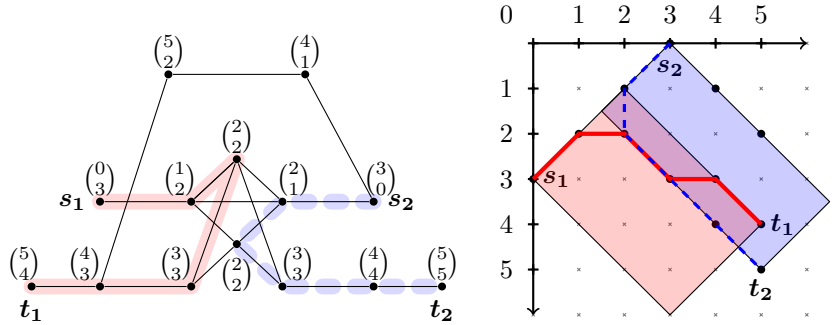

This arrangement of the vertices allows the following simple geometric observation: The drawing of each shortest $s_1$-$t_1$-path has to be within a rectangle with angles of $45^\circ$ to the coordinate axes and with corner points~$s_1$ and~$t_1$ (see red and blue rectangles in the right side of \cref{fig:diamond-introduction}). 
As a consequence, shortest paths that have to stay within two disjoint such rectangles cannot intersect.
We use this argument extensively in the subsequently sketched algorithm.%

We first assume that the drawings of~$P_1$ and~$P_2$ cross as displayed in the right side of \cref{fig:diamond-introduction} (the other case is significantly easier to deal with and we will explain it when describing the full algorithm for \kDSP).
Therein, we distinguish whether the intersection of the drawings of~$P_1$ and~$P_2$ contain a point with integer coordinates or not, that is, our algorithm tries to find solutions for both cases.

If the intersection does not have a point with integer coordinates, then it is easy to see that the intersection of the drawing of~$P_1$ and~$P_2$ has to be a single point~$p$ (with non-integer coordinates).
We guess\footnote{Whenever we pretend to guess something, the algorithm actually exhaustively tests all possible choices.} the four coordinate pairs~$(x,y),(x,y+1),(x+1,y),$ and~$(x+1,y+1)$ with~$x,y \in \NN$ surrounding the intersection point of the drawings of~$P_1$ and~$P_2$.
Note that this can be done in~$O(n)$ time by guessing the vertex on the coordinate pair~$(x,y)$ and guessing whether it belongs to~$P_1$ or~$P_2$.
The goal is now to turn the input graph~$G$ into a directed acyclic graph~$D$ such that each shortest $s_i$-$t_i$-path in~$G$ corresponds to an $s_i$-$t_i$-path in~$D$.
To this end, we partition the grid into four areas~$A_1, B_1, A_2, B_2$ (each area is defined by one of the guessed points and the closer endpoint of the path going through the point) and orient each edge according to the area it lies in (see left side of \cref{fig:to-dag} for an illustration).
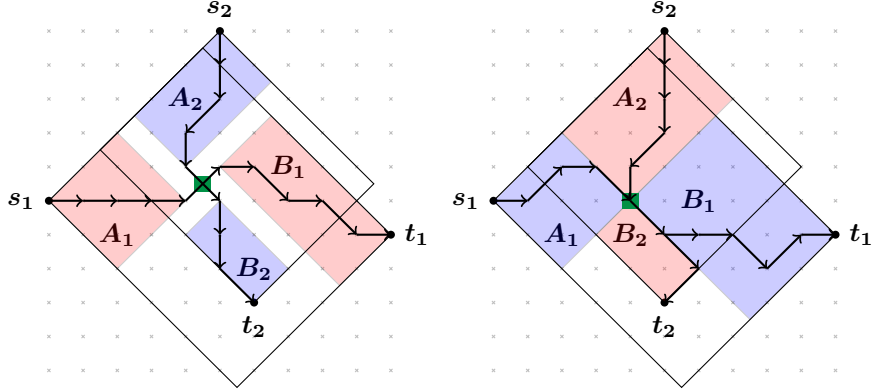
\begin{figure}[t]
	\centering
	\hspace{-0.4cm}
	\begin{tikzpicture}[scale=.45]
	\small
	\begin{scope}[xshift=0cm,rotate=90]
		\foreach \x in {0,1,...,10}{
			\foreach \y in {0,1,...,10}{
				\node[cross, gray!50] at (\x,\y){};
			}
		}
		\node[2Dpoint,label=below:\textbf{\boldmath{$t_2$}}] at (2,5) {};
		\node[2Dpoint,label=above:\textbf{\boldmath{$s_2$}}] at (10,5) {};
		\node[2Dpoint,label=right:\textbf{\boldmath{$t_1$}}] at (4,0) {};
		\node[2Dpoint,label=left:\textbf{\boldmath{$s_1$}}] at (5,10) {};
		
		\node at (4,6) {\textbf{\boldmath{$B_2$}}};
		\node at (8,6) {\textbf{\boldmath{$A_2$}}};

		\node at (4,8) {\textbf{\boldmath{$A_1$}}};
		\node at (5,4) {\textbf{\boldmath{$B_1$}}};

		\diamondleftlabel{2}{5}{5}{6}{fill=red,  opacity=.2}{}; %
		\diamondleftlabel{5}{6}{10}{5}{fill=red,  opacity=.2}{}; %
		\diamonduplabel{5}{10}{5}{6}{fill=blue,  opacity=.2}{};; %
		\diamonduplabel{5}{6}{4}{0}{fill=blue, opacity=.2}{}; %

		\node[bold2Dpoint,minimum size = 6pt,italyGreen] at (5,6) {};
	
		\diamondleft{4}{0}{5}{10};
		\diamondleft{2}{5}{10}{5};
		
		\draw[thick,<-] (2,5) -- (3,4);
		\draw[thick,<-] (3,4) -- (4,5);

		\draw[thick,->]  (5,6) -- (4,5) ;

		\draw[thick,->]  (10,5) -- (9,5);
		\draw[thick,->]  (9,5) -- (8,5);
		\draw[thick,->]  (8,5) -- (7,5);
		\draw[thick,->]  (7,5) -- (6,6);
		\draw[thick,->]  (6,6) -- (5,6); 

		\draw[thick,<-] (4,0) -- (4, 1);
		\draw[thick,<-] (4,1) -- (3, 2);
		\draw[thick,<-] (3, 2) -- (4, 3);
		\draw[thick,<-] (4, 3)-- (4,4);
		\draw[thick,<-] (4,4) -- (4,5) ;

		\draw[thick,<-] (5,6) -- (6,7);
		\draw[thick,<-] (6,7) -- (6, 8);
		\draw[thick,<-] (6, 8) -- (5, 9);
		\draw[thick,<-] (5, 9) --  (5, 10); 
	\end{scope}
	\begin{scope}[xshift=-23cm]
		\foreach \x in {0,1,...,10}{
			\foreach \y in {0,1,...,10}{
				\node[cross, gray!50] at (\x,\y){};
			}
		}
		\node[2Dpoint,label=left:\textbf{\boldmath{$s_1$}}] at (0,5) {};
		\node[2Dpoint,label=right:\textbf{\boldmath{$t_1$}}] at (10,4) {};
		\node[2Dpoint,label=below:\textbf{\boldmath{$t_2$}}] at (6,2) {};
		\node[2Dpoint,label=above:\textbf{\boldmath{$s_2$}}] at (5,10) {};
		\node at (2,4) {\textbf{\boldmath{$A_1$}}};
		\node at (7,6) {\textbf{\boldmath{$B_1$}}};
		\node at (4,8) {\textbf{\boldmath{$A_2$}}};
		\node at (6,3) {\textbf{\boldmath{$B_2$}}};

		\diamondleftlabel{0}{5}{4}{5}{fill=red,  opacity=.2}{};
		\diamondleftlabel{5}{6}{10}{4}{fill=red,  opacity=.2}{};
		\diamonduplabel{5}{10}{4}{6}{fill=blue,  opacity=.2}{};;
		\diamonduplabel{5}{5}{6}{2}{fill=blue, opacity=.2}{};
		
		\node[bold2Dpoint,minimum size = 6pt,italyGreen] at (4.5,5.5) {};
		\diamondleft{0}{5}{10}{4};
		\diamondleft{5}{10}{6}{2};
		
		\draw[thick,->] (4,5) -- (5,6) ;
		\draw[thick,->] (4,6) --(5,5); 

		\draw[thick,->] (0,5) -- (1,5) ;
		\draw[thick,->] (1,5) -- (2,5) ;
		\draw[thick,->] (2,5) -- (3,5) ;
		\draw[thick,->] (3,5) -- (4,5) ;

		\draw[thick,<-] (6,2) --(5,3); 
		\draw[thick,<-] (5,3) --(5,4); 
		\draw[thick,<-] (5,4) --(5,5); 

		\draw[thick,->] (5,10) --(5,9);
		\draw[thick,->] (5,9) --(5,8);
		\draw[thick,->] (5,8) -- (4,7);
		\draw[thick,->] (4,7) -- (4,6); 
		\draw[thick,<-] (10,4) --(9,4);
		\draw[thick,<-] (9,4) --(8,5);
		\draw[thick,<-] (8,5) --(7,5);
		\draw[thick,<-] (7,5) -- (6,6);
		\draw[thick,<-] (6,6) -- (5,6); 
	\end{scope}
	\end{tikzpicture}
	\caption{ An illustration of the directed acyclic graph used for \cref{prop:2dsp}. 
			The first point~$p$ in the intersection of the drawing of~$P_1$ and~$P_2$ is marked by a green square.
			\emph{Left side:} The straight-line drawings of $P_1$ and~$P_2$ only intersect in a non-integer point.
			\emph{Right side:} Both~$P_1$ and~$P_2$ use vertices with same coordinates.
		}
	\label{fig:to-dag}
\end{figure}
An edge~$\{v,w\}$ in the area~$A_i$ or~$B_i$, $i \in [2]$, is oriented towards the vertex~$w$ with the larger~$i$ coordinate.
Edges between~$A_i$ and~$B_i$ are oriented towards the vertex in~$B_i$.
All remaining (unoriented) edges are removed.
Note that this results in a directed acyclic graph (DAG).
Furthermore, a shortest $s_i$-$t_i$-path in~$G$ that goes through the guessed intersection induces an $s_i$-$t_i$-path in~$D$ and each $s_i$-$t_i$-path in~$D$ is a shortest path in~$G$ because it is strictly monotone increasing in the $i$-coordinate and all strictly monotone increasing paths have the same length as each path contains one vertex for each integer $i$-coordinate between the~$i$-coordinates of~$s_i$ and~$t_i$.
Observe that in this case the two paths cannot intersect as~$P_1$ can only reach vertices with coordinates in~$A_1$ and~$B_1$ and~$P_2$ can only use vertices with coordinates in~$A_2$ and~$B_2$.
Hence, one can find~$P_1$ and~$P_2$ in linear time.
Altogether, this gives a running time of~$O(nm)$ in this case.

Assume now that there is a point with integer coordinates in the intersection of~$P_1$ and~$P_2$; this case requires more work.
We assume that if there are two points $(x_1,y_1)$ and $(x_2,y_2)$ in the intersection of the drawings of $P_1$ and~$P_2$, then we have $x_1 < x_2 \Leftrightarrow y_1 < y_2$.
If this is not the case, then repeat the algorithm below with swapped $s_2$ and $t_2$.
We guess the first point~$p$ in the intersection (note that it has integer coordinates). 
This can be done in $O(n)$ time by guessing a vertex on~$p$.
Now we arrange the areas slightly different.
The areas are defined by~$p$ and one coordinate of~$s_1, t_1, s_2, t_2$; see the right side of \cref{fig:to-dag} for an illustration.
Edges in the area~$A_i \setminus B_j$ or~$B_i \setminus A_j$ are oriented towards the vertex with the larger~$i$-coordinate.
Note that edges on the line in~$A_i \cap B_j, i \neq j,$ could either be used by~$P_1$ or~$P_2$ (but not both), meaning that we have to direct the edges towards the vertex with either the larger $1$- or $2$-coordinate.
Since there are only two possibilities for orienting the edges in~$A_1 \cap B_2$, there are only four different possibilities to orient the edges on~$A_1 \cap B_2$ and~$A_2 \cap B_1$.
We try all four possible orientations and if at least one of them yields a solution, then we know that there is a solution.
All other (unorientated) edges are removed---a shortest $s_i$-$t_i$-path cannot use it.
Note that this results for each of the four described cases in a DAG.
Furthermore, again a shortest $s_i$-$t_i$-path in~$G$ induces an $s_i$-$t_i$-path in at least one DAG and each $s_i$-$t_i$-path in~$D$ is an shortest path in~$G$.

Finally, we use a $O(n+m)$-time algorithm of \citet{Tho12} for \textsc{2-Disjoint Paths} on a DAG to find $P_1$ and $P_2$.
Since there are~$O(n)$ possibilities for the point~$p$ and~$4$ possibilities for directing the edges between~$A_i$ and~$B_j$, $i \neq j$, we call~$O(n)$ instances of the algorithm of \mbox{\citet{Tho12}}.
Thus, we obtain \cref{prop:2dsp} which is formally proven in 
\cref{sec:2D-geometry}.

\subparagraph*{Generalizing to \kDSP.} 
We now discuss how to generalize the ideas from above to \DSP{k}, where $k>2$.
One central idea for $k=2$ is that the subpaths within the areas~$A_1, A_2, B_1, B_2$ (see \cref{fig:to-dag}) can hardly overlap.
The only overlap is possible along the borders.
In our approach for~$k > 2$, we simplify this even further by guessing the vertices on each path before and after the intersection (thus incurring a higher running time).
This results in four cases; see \cref{fig:areas} for an overview of the cases and the guessed vertices (marked by black squares; the guessed vertices will be called marbles latter).
\begin{figure}[t]
	\centering
	\begin{tikzpicture}[scale=.28]
		\begin{scope}
			\foreach \x in {0,1,...,10}{
				\foreach \y in {0,1,...,10}{
					\node[cross, gray!50] at (\x,\y){};
				}
			}
			\node at (0,10) {\textbf{(i)}};
			\node[2Dpoint,label=above:\textbf{\boldmath{$s_1$}}] at (0,5) {};
			\node[2Dpoint,label=above:\textbf{\boldmath{$t_1$}}] at (10,4) {};
			\node[2Dpoint,label=right:\textbf{\boldmath{$t_2$}}] at (5,1) {};
			\node[2Dpoint,label=left:\textbf{\boldmath{$s_2$}}] at (5,10) {};
			
			\diamondleftlabel{0}{5}{4}{5}{fill=red, opacity=.2}{};
			\diamondleftlabel{5}{6}{10}{4}{fill=red, opacity=.2}{};
			\diamonduplabel{5}{10}{4}{6}{fill=blue, opacity=.2}{};;
			\diamonduplabel{5}{5}{5}{1}{fill=blue, opacity=.2}{};
			
			\node[bold2Dpoint] at (4,5) {};
			\node[bold2Dpoint] at (5,6) {};
			\node[bold2Dpoint] at (4,6) {};
			\node[bold2Dpoint] at (5,5) {};
			\diamondleft{0}{5}{10}{4};
			\diamondleft{5}{1}{5}{10};
			
			\draw[thick] (4,5) -- (5,6);
			\draw[thick] (4,6) --(5,5); 
			\draw[thick] (0,5) -- (4,5);
			\draw[thick] (5,1) --(5,5); 
			\draw[thick] (5,10) --(5,8) -- (4,7) -- (4,6); 
			\draw[thick] (10,4) --(8,4) -- (6,6) -- (5,6); 
		\end{scope}
		\begin{scope}[xshift=12cm]
			\foreach \x in {0,1,...,10}{
				\foreach \y in {0,1,...,10}{
					\node[cross, gray!50] at (\x,\y){};
				}
			}
			\node at (0,10) {\textbf{(ii)}};
			\node[2Dpoint,label=below:\textbf{\boldmath{$s_1$}}] at (0,5) {};
			\node[2Dpoint,label=below:\textbf{\boldmath{$t_1$}}] at (10,4) {};
			\node[2Dpoint,label=right:\textbf{\boldmath{$t_2$}}] at (5,1) {};
			\node[2Dpoint,label=left:\textbf{\boldmath{$s_2$}}] at (5,10) {};
			
			\node[bold2Dpoint] at (2,6) {};
			\diamondleftlabel{0}{5}{2}{6}{fill=red, opacity=.2}{};
			\node[bold2Dpoint] at (6,4) {};
			\diamondleftlabel{6}{4}{10}{4}{fill=red, opacity=.2}{};
			\node[bold2Dpoint] at (6,3) {};
			\diamonduplabel{5}{1}{6}{3}{fill=blue, opacity=.2}{};;
			\node[bold2Dpoint] at (3,7) {};
			\diamonduplabel{3}{7}{5}{10}{fill=blue, opacity=.2}{};
			
			\node[bold2Dpoint] at (3,6) {};
			\node[bold2Dpoint] at (5,4) {};
			\diamondleft{0}{5}{10}{4};
			\diamondleft{5}{1}{5}{10};
			
			\draw[thick] (0,5) -- (1,5) -- (2,6) -- (3,6);
			\draw[thick, dashed] (3,6) -- (5,4) ;
			\draw[thick] (5,4) --(6,4) -- (7,5) -- (8,4) -- (10,4); 
			\draw[thick] (5,10) -- (5, 9) -- (3,7) -- (3,6) ;
			\draw[thick] (5,4) --(6,3) -- (5,2) -- (5, 1); 
		\end{scope}
		\begin{scope}[xshift=24cm]
			\foreach \x in {0,1,...,10}{
				\foreach \y in {0,1,...,10}{
					\node[cross, gray!50] at (\x,\y){};
				}
			}			
			\node at (0,10) {\textbf{(iii)}};
			\node[2Dpoint,label=below:\textbf{\boldmath{$s_1$}}] at (0,5) {};
			\node[2Dpoint,label=right:\textbf{\boldmath{$t_1$}}] at (4,6) {};
			\diamondleft{0}{5}{4}{6};
			\diamonduplabel{0}{5}{4}{6}{fill=red, opacity=.2}{};
			\node[2Dpoint,label=left:\textbf{\boldmath{$t_2$}}] at (5,2) {};
			\node[2Dpoint,label=left:\textbf{\boldmath{$s_2$}}] at (5,10) {};
			\diamondup{5}{10}{5}{2};
			\diamonduplabel{5}{10}{7}{6}{fill=blue, opacity=.2}{};
			\diamonduplabel{7}{6}{5}{2}{fill=blue, opacity=.2}{};
			\node[bold2Dpoint,label={[shift={(0.3,-.3)}]\small{\textbf{\boldmath{$\delta$}}}}] at (7,6) {};
			\draw[thick] (5,10) -- (5,9) -- (7,7) -- (7,6) -- (6,5) -- (6,4) -- (5,3) -- (5,2); 
			\draw[thick] (0,5) -- (1,5) -- (3,7) -- (4,6); 
		\end{scope}
		\begin{scope}[xshift=36cm]
			\foreach \x in {0,1,...,10}{
				\foreach \y in {0,1,...,10}{
					\node[cross, gray!50] at (\x,\y){};
				}
			}
			\node at (0,10) {\textbf{(iv)}};
			\node[2Dpoint,label=below:\textbf{\boldmath{$s_1$}}] at (0,5) {};
			\node[2Dpoint,label=right:\textbf{\boldmath{$t_1$}}] at (4,3) {};
			\diamondup{0}{5}{4}{3};
			\diamonduplabel{0}{5}{4}{3}{fill=red, opacity=.2}{};
			\node[2Dpoint,label=right:\textbf{\boldmath{$t_2$}}] at (6,4) {};
			\node[2Dpoint,label=left:\textbf{\boldmath{$s_2$}}] at (5,10) {};
			\diamondup{5}{10}{6}{4};
			\diamonduplabel{5}{10}{6}{4}{fill=blue, opacity=.2}{};
			\draw[thick] (5,10) -- (4,9) -- (7,6) -- (6,5) -- (6,4); 
			\draw[thick] (0,5) -- (1,5) -- (3,3) -- (4,3); 
		\end{scope}
	\end{tikzpicture}
	\caption{The four cases for the projection of two paths~$P_1$ and~$P_2$ in the two-dimensional grid. From left to right: 
		(i) The projection of the paths cross in one point with non-integer coordinates.
		(ii) The projection of the paths cross in at least one point with integer coordinates.
		(iii) The rectangles defined by the endpoints of~$P_1$ and~$P_2$ intersect, but the projections of~$P_1$ and~$P_2$ do not intersect. 
		(iv) The rectangles defined by the endpoints of~$P_1$ and~$P_2$ do not intersect.
	}
	\label{fig:areas}
\end{figure}
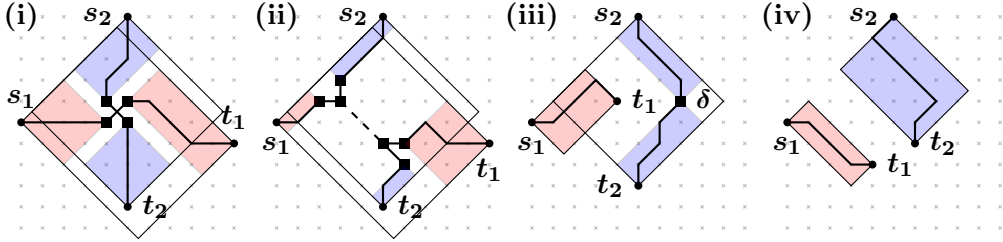
It is easy to see in \cref{fig:areas} that in each case no subpath within one red or blue area can possibly intersect with a subpath within another red or blue area.
As can be seen in \cref{fig:areas}, there is only one case where subpaths of~$P_1$ and~$P_2$ have to be computed carefully due to possible intersections: Both paths use the dashed line in case (ii).
However, this is the part where both paths are strictly monotone in both coordinates.
This is what allowed us for~$k=2$ to transform the graph into a DAG
while preserving the solutions (both paths being strictly monotone in at least one coordinate is actually sufficient for this transformation).

Considering~$k$ paths, we associate with each vertex~$v \in V$ a position in the $k$-dimensional Euclidean vector space.
For brevity, we say that a path has color~$i$ if it is strictly monotone in its~$i$-coordinate.
Thus, each path~$P_j$ has color~$j$.
The problem \textsc{$k$-Disjoint Paths on a DAG} is solvable in polynomial time for constant~$k$~\cite{FHW80}.
Thus, if we want to find subpaths from~$u_i$ to~$v_i$ for all~$i \in [k]$ that all have the same color (i.\,e.\ for each~$i \in [k]$ we have that the difference of the $i$-th coordinate of~$u_i$ and~$v_i$ is~$\dist(u_i,v_i)$), then we can use the algorithm of \citet{FHW80}.
For completeness, we provide a dynamic program with a precise running time analysis 
as \citet{FHW80} only state ``polynomial time''.
The general approach to solve the given~\mbox{\kDSP} instance is thus as follows: 
Split the paths~$P_1,\ldots,P_k$ into $f(k)$ subpaths (i.\,e. guess the endpoints of the subpaths) and find a partition of the subpaths such that
\begin{enumerate}[(i)]
		\item subpaths in the same part of the partition share a common color and hence can be computed by the algorithm of \citet{FHW80} or our dynamic program, and 
		\item subpaths in different parts of the partition cannot intersect.
\end{enumerate}
We remark that this is essentially the same general approach used by \citet{Loc21}.
However, he does not use the geometric view of the paths (as we do).
As a result, even for~$k=2$ he only bounds the number of created subpaths by~$9^{91}$ (cf.\ \citet[Lemma 4.2]{Loc21}).
While this constant is certainly not optimized, one can easily see in \cref{fig:areas} that our approach splits the two paths in at most five parts (in case (ii)).
Moreover, our geometric view allows us to use a more efficient way of splitting the paths for general~$k$, which we describe below.

\newcommand{\colvec}[3]{
	\begin{pmatrix}
		#1\\#2\\#3
	\end{pmatrix}
}

Recall that for~$k=2$ the two paths~$P_1$ and~$P_2$ have at most one intersection (a point or a straight line); see \cref{fig:areas}.
However, in three dimensions~$k > 2$ this is no longer true as neither~$P_1$ nor~$P_2$ needs to be monotone in a third dimension, see \cref{fig:3D-mess} for an example.
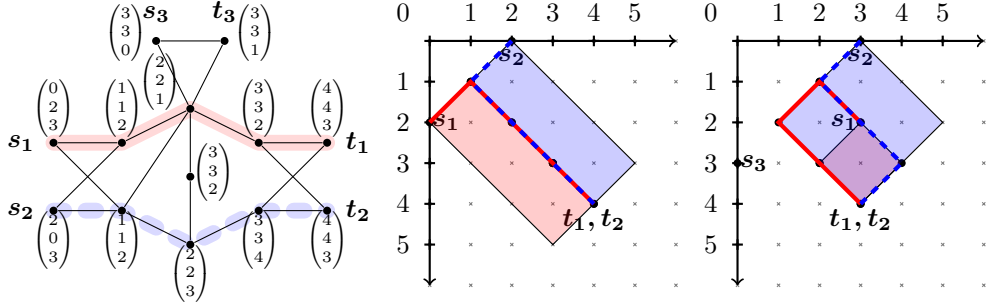
\begin{figure}[t]
	\centering
	\small
	\begin{tikzpicture}[scale=0.9]
		\begin{scope}

			\foreach[count=\i] \x / \y / \shift / \a / \b / \c in {
				0/0/{(0,-.95)}/2/0/3, 0/1/{(0,-.1)}/0/2/3, 1/0/{(0,-.95)}/1/1/2, 1/1/{(0,-.1)}/1/1/2, 2/-.5/{(0,-.95)}/2/2/3, 2/.5/{(.3,-.5)}/3/3/2, 2/1.5/{(-.4,-.2)}/2/2/1, 3/0/{(0,-.95)}/3/3/4, 3/1/{(0,-.1)}/3/3/2, 4/0/{(0,-.95)}/4/4/3, 4/1/{(0,-.1)}/4/4/3, 1.5/2.5/{(-.4,-.45)}/3/3/0, 2.5/2.5/{(.4,-.45)}/3/3/1
			}{
				\node[circle,inner sep=0pt,minimum size=3pt,fill=black,label={[shift={\shift}]{\tiny $\colvec{\a}{\b}{\c}$}}] (v\i) at (\x,\y) {};
			}
			\node[label=above:{\textbf{\boldmath{$s_3$}}}] () at (v12) {};
			\node[label=left:{\textbf{\boldmath{$s_2$}}}] () at (v1) {};
			\node[label=left:{\textbf{\boldmath{$s_1$}}}] () at (v2) {};
			\node[label=right:{\textbf{\boldmath{$t_2$}}}] () at (v10) {};
			\node[label=right:{\textbf{\boldmath{$t_1$}}}] () at (v11) {};
			\node[label=above:{\textbf{\boldmath{$t_3$}}}] () at (v13) {};

			\foreach \x/\y in {1/3, 1/4, 2/3, 2/4, 3/5, 4/7, 5/6, 5/8, 6/7, 7/9, 7/12, 8/11, 8/10, 9/10, 9/11, 3/7, 7/13, 12/13}{
				\draw (v\x) -- (v\y);
			} 
			\begin{pgfonlayer}{background}
				\draw[path,draw=blue,dash pattern=on 4pt off 8pt] (v1.center) -- (v3.center) -- (v5.center) -- (v8.center) -- (v10.center);
				\draw[path,draw=red] (v2.center) -- (v4.center) -- (v7.center) -- (v9.center) -- (v11.center);
			\end{pgfonlayer}
		\end{scope}     
		\small
		\begin{scope}[xshift=5.5cm,yshift=2.5cm,scale=.6]
			\foreach \x in {0,1,...,6}{
				\foreach \y in {0,-1,...,-6}{
					\node[cross, gray] at (\x,\y){};
				}
			}
			\drawCoordOrigin{0}{0}
		
			\foreach \x/\y in {2/0, 0/2, 1/1, 1/1, 2/2, 3/3, 2/2, 3/3, 3/3, 4/4, 4/4, 3/3}{
				\node[2Dpoint] at (\x,-\y){};
			}
			\node at (2,-.4) {\textbf{\boldmath{$s_2$}}};
			\node at (.4,-2) {\textbf{\boldmath{$s_1$}}};

			\node at (4,-4.4) {\textbf{\boldmath{$t_1,t_2$}}};
			
			\diamonduplabel{2}{0}{4}{-4}{fill=blue, opacity=.2}{};
			\diamondup{2}{0}{4}{-4}
			\diamonduplabel{0}{-2}{4}{-4}{fill=red, opacity=.2}{};
			\diamondup{0}{-2}{4}{-4}
			\draw[red, ultra thick] (0,-2) -- (1,-1) --(2,-2)--(3,-3)--(4,-4);
			\draw[blue,ultra thick, dashed] (2,0) -- (1,-1) --(2,-2)--(3,-3)--(4,-4);
		\end{scope} 
		\begin{scope}[xshift=10cm,yshift=2.5cm,scale=.6]
			\foreach \x in {0,1,...,6}{
				\foreach \y in {0,-1,...,-6}{
					\node[cross, gray] at (\x,\y){};
				}
			}
			\drawCoordOrigin{0}{0}
		
			\foreach \x/\y in {3/0, 3/2, 2/1, 2/1, 3/2, 2/3, 1/2, 4/3, 2/3, 3/4, 3/4, 0/3}{
				\node[2Dpoint] at (\x,-\y){};
			}
			\node at (3,-.4) {\textbf{\boldmath{$s_2$}}};
			\node at (.4,-3) {\textbf{\boldmath{$s_3$}}};
			\node at (2.6,-2) {\textbf{\boldmath{$s_1$}}};
			\node at (3,-4.4) {\textbf{\boldmath{$t_1,t_2$}}};

			\diamonduplabel{3}{-2}{3}{-4}{fill=red, opacity=.2}{};
			\diamondup{3}{-2}{3}{-4}

			\diamonduplabel{3}{0}{3}{-4}{fill=blue, opacity=.2}{};
			\diamondup{3}{0}{3}{-4}
			\draw[red, ultra thick] (3,-2) --(2,-1) --(1,-2)--(2,-3)--(3,-4);
			\draw[blue,ultra thick, dashed] (3,0) -- (2,-1) --(3,-2)--(4,-3)--(3,-4);
		\end{scope} 
	\end{tikzpicture} 
	\caption{
		\emph{Left side:} A graph with the coordinates displaying the distances from~$s_1$, $s_2$, and~$s_3$ 
		and two paths~$P_1$ and~$P_2$. 
		\emph{Middle:} The~$\{1,2\}$-projection of the two paths falsely suggesting only one consecutive part of overlap of the paths in~$\R^3$.
		\emph{Right side:} The~$\{2,3\}$-projection of the two paths suggesting (correctly) that there are actually two parts of overlap between the paths in~$\R^3$.
	}
	\label{fig:3D-mess}
	\vspace{-0.4cm}
\end{figure}

Thus, to exploit the properties shown in \cref{fig:areas} for two paths~$P_i$ and~$P_j$, we first project into two dimensions using the~$i$ and~$j$ coordinate.
Whenever two paths~$P_i$ and~$P_j$ intersect, then we know that the two subpaths in the intersection 
have both colors~$i$ and~$j$.
Thus, we can use for these subpaths the two-dimensional observations behind \cref{fig:areas} with new projections. %
We store for each subpath~$P'$ of~$P_i$ the set~$\Phi$ of all indices of the paths that $P'$ intersects, that is, $\Phi$ is a subset of all colors that~$P'$ has.
Now, if~$P'$ and~$P_j$ intersect, then there is a subpath of~$P_j$ that has colors~$\Phi \cup \{i\}$.
Hence this set~$\Phi$ of colors can be seen as a ``tower of colors'' that is transferred to other paths.   
Our algorithm transfers these towers from one path to another as long as possible.
These towers will be defined over permutations of subsets of~$[k]$ that encode how these color-towers are produced; see \cref{sec:towers}.
As there are at most~$k \cdot k!$ such permutations, this explains the exponent of our algorithm.
In the end, we proof our main result (\cref{thm:main-thm}) in \cref{sec:the-algorithm}.

\section{The Geometry of Two Shortest Paths}
\label{sec:2D-geometry}
We now formalize and generalize the idea behind the geometric view (visualized in \cref{fig:diamond-introduction,fig:areas}).
We start with some notation for projections.
For any~$\emptyset \subset I \subseteq [k]$ and any vector~${x \in \RR^k}$, 
we denote by~${x^I \in \RR^{|I|}}$ the orthogonal projection of~$x$ to the coordinates in~$I$.
That is, $x^I$ is the~$|I|$-dimensional vector obtained by deleting all dimensions in~$x$ that are not in~$I$.
We usually drop the brackets in the exponent, 
thus writing  
\eg $(5,6,7,8,9)^{1,3,4} \defeq (5, 7, 8)$ or~${(5, 6, 7)^2 \defeq (6)}$.
Similarly, for~$R \subseteq \RR^k$ we define $R^I \defeq \set{x^I ; x \in R} \subseteq \RR^{|I|}$.

\subsection{Coordinates and Vertices}

We associate with each vertex~$v \in V$ a vector in the $k$-dimensional Euclidean vector space.
Formally,~$\pos{v} \defeq (\pos{v}^i)_{i\in[k]} \defeq (\dis(s_i, v))_{i\in[k]} \in \NN^k$ and for $U \subseteq V$ we use~$\pos{U} \defeq \set{\pos{u} ; u \in U}$ to denote the set of all vectors of vertices in~$U$.
For a given instance of \kDSP, one can compute the vector of each vertex in~$O(km)$ time by performing a breadth-first-search from each vertex~$s_i$.

Subsequently, we will use the following notations for any index set $\emptyset \subset I \subseteq [k]$:
\[v \simeq^I w \Leftrightarrow \forall a \in I\colon \pos{v}^a \simeq \pos{w}^a,\]
for any ${\simeq} \in \{<, \le, =, \ge, >\}$ and any vertices $v, w$ and
\[V \simeq^I W \Leftrightarrow \{\pos{v}^I \mid v \in V\} \simeq \{\pos{w}^I \mid w \in W\},\]
for any ${\simeq} \in \{\subset, \subseteq, =, \supseteq, \supset\}$ and any sets~$V$ and $W$ of vertices.

\begin{lemma}
	\label{thm:basic_inequality}
	For any pair of vertices $v, w \in V$, we have $\infnorm{\pos{v} - \pos{w}} \leq \dis(v, w)$.
\end{lemma}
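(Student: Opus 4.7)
The plan is to unfold the definition of $\pos{v}$ and $\pos{w}$ and then apply the triangle inequality for shortest-path distances coordinatewise. Recall that by definition $\pos{v} = (\dist(s_i, v))_{i \in [k]}$ and $\pos{w} = (\dist(s_i, w))_{i \in [k]}$, so the vector $\pos{v} - \pos{w}$ has $i$-th coordinate equal to $\dist(s_i, v) - \dist(s_i, w)$. Consequently, $\infnorm{\pos{v} - \pos{w}} = \max_{i \in [k]} \abs{\dist(s_i, v) - \dist(s_i, w)}$, and it suffices to bound each of these $k$ numbers individually.

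Fix an arbitrary $i \in [k]$. The usual triangle inequality for the metric $\dist$ on the graph $G$ gives $\dist(s_i, v) \leq \dist(s_i, w) + \dist(w, v)$ and, symmetrically, $\dist(s_i, w) \leq \dist(s_i, v) + \dist(v, w)$; note that $\dist(v, w) = \dist(w, v)$ since $G$ is undirected. Rearranging these two inequalities yields $\abs{\dist(s_i, v) - \dist(s_i, w)} \leq \dist(v, w)$.

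Since this bound holds for every $i \in [k]$, taking the maximum over the coordinates establishes $\infnorm{\pos{v} - \pos{w}} \leq \dist(v, w)$, as claimed. There is no real obstacle here: the statement is essentially a reformulation of the triangle inequality applied to each of the $k$ basepoints $s_1, \ldots, s_k$ in turn, and the argument goes through without any reference to the shortest-path structure beyond the metric axioms.
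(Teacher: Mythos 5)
Your proof is correct, but it takes a slightly different route from the paper. You decompose along the coordinates: for each $i \in [k]$ you invoke the triangle inequality of the graph metric to get the $1$-Lipschitz bound $\abs{\dist(s_i,v) - \dist(s_i,w)} \leq \dist(v,w)$, and then take the maximum over $i$. The paper instead decomposes along a shortest $v$-$w$-path $P$: it notes that each edge $\{p,q\}$ of $P$ satisfies $\infnorm{\pos{p}-\pos{q}} \leq 1$ and then sums these contributions using the triangle inequality of the $\ell_\infty$-norm, so that $\infnorm{\pos{v}-\pos{w}}$ is at most the number of edges of $P$, i.e.\ $\dist(v,w)$. The two arguments are of essentially equal length, but yours is arguably cleaner: it needs only the metric axioms of $\dist$ and never has to justify the per-edge bound (which the paper even states as an equality, $\infnorm{\pos{p}-\pos{q}} = 1$, when $\leq 1$ is what actually holds and is all that is needed — adjacent vertices can be equidistant from every $s_i$). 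Your closing remark that the statement is just the coordinatewise triangle inequality for the $k$ basepoints is exactly the right way to see it.
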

\begin{proof}
	Let $P=vu_1u_2\ldots u_{|P|-2}w$ be a shortest $v$-$w$-path.
	We say~$v = u_0$ and~$w=u_{|P|-1}$.
	Each edge $\{u_{j-1},u_{j}\}$ in $P$ fulfills~$\infnorm{\pos{u_{j-1}} - \pos{u_{j}}} \leq 1$ as~$|\dis(s_i,u_{j-1}) - \dis(s_i,u_j)| \leq 1$ for each vertex~$s_i$.
	Thus, by the triangle inequality, \[\infnorm{\pos{v}-\pos{w}} \leq \sum_{j=1}^{|P|-1} \infnorm{\pos{u_{j-1}}-\pos{u_{j}}} \leq \sum_{j=1}^{|P|-1} 1 = |P|-1 = \dis(v, w).\qedhere\]
\end{proof}

For two vertices~$u, w \in V$, let
\[\patharea{u}{w} \defeq \set{v \in V \mid \dis(u,v) + \dis(v,w) = \dis(u,w)}\]
be the set of all vertices that lie on a shortest~$u$-$w$-path. 
Similarly, for any~${x,y \in \NN^k}$, let
\[\patharea{x}{y} \defeq \set{z \in \RR^k \mid \infnorm{x-z} + \infnorm{z-y} = \infnorm{x-y}}\]
be the hyperrectangle spanned by~$x$ and~$y$ (whose sides form an angle of~$45^\circ$ with the coordinate axes (see~\cref{fig:diamond-introduction})).
We continue with a formal definition of \emph{colors}.

\begin{definition}\label{def:colors}
	Let $s, t$ be two vertices and let~$P$ be a shortest $s$-$t$-path.
	The pair~$(s, t)$ and the path~$P$ are~$c$-\emph{colored} if~$\dis(s, t) = \abs{\pos{s}^c-\pos{t}^c}$.
	Let
	\[{C(P) \defeq C(s, t) \defeq \set{c \in [k]; \abs{\pos{s}^c - \pos{t}^c} = \dis(s, t)}}\]
	be the set of all colors of~$P$.
	The pair~$(s, t)$ and the path~$P$ are \emph{colored} if~$C(s,t) \neq \emptyset$ and~${C(P) \neq \emptyset}$, respectively.
\end{definition}

Note that this definition of a~$c$-colored path is equivalent to saying that~$P$ is strictly monotonous in its~$c$-coordinates.
For arbitrary~$u,w \in V$ we do \emph{not} always have~$\pos{\patharea{u}{w}} \subseteq \patharea{\pos{u}}{\pos{w}}$, that is, the vectors of all vertices on a shortest~\mbox{$u$-$w$-path} are not necessarily contained in the set of vectors ``spanned'' by~$\pos{u}$ and~$\pos{w}$; see \cref{fig:3D-mess} (middle) for an example where the $s_3$-$t_3$-path does not stay within the hyperrectangle spanned by~$\pos{s_3}$ and~$\pos{t_3}$.
However, this inclusion holds for colored vertex pairs as shown next:

\begin{lemma}\label{thm:diamond_inclusion} 
	Let $v, w \in V$ be a $b$-colored pair.
	Then,~$\pos{\patharea{v}{w}} \subseteq \patharea{\pos{v}}{\pos{w}}$.
\end{lemma}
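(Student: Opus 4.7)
The plan is to take an arbitrary vertex $u \in \patharea{v}{w}$ and show that its position $\pos{u}$ satisfies the defining equation of $\patharea{\pos{v}}{\pos{w}}$, namely
$\infnorm{\pos{v}-\pos{u}} + \infnorm{\pos{u}-\pos{w}} = \infnorm{\pos{v}-\pos{w}}$.
I will obtain this as the collapse of a sandwich of inequalities, in which the $a$-coloring is the ingredient that glues the two sides together.

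First, I apply \cref{thm:basic_inequality} to the pairs $(v,u)$ and $(u,w)$, yielding $\infnorm{\pos{v}-\pos{u}} \leq \dist(v,u)$ and $\infnorm{\pos{u}-\pos{w}} \leq \dist(u,w)$. Summing and invoking the defining property $\dist(v,u) + \dist(u,w) = \dist(v,w)$ of $u \in \patharea{v}{w}$, this gives $\infnorm{\pos{v}-\pos{u}} + \infnorm{\pos{u}-\pos{w}} \leq \dist(v,w)$.

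Next, the $a$-coloring assumption tells us that $\dist(v,w) = \abs{\pos{v}^a - \pos{w}^a}$, and since the $\infty$-norm dominates every single coordinate, this quantity equals $\infnorm{\pos{v}-\pos{w}}$ (it certainly cannot exceed it, and it cannot be strictly smaller since it matches the $\infty$-norm by definition of $a$-colored). So the upper bound becomes $\infnorm{\pos{v}-\pos{u}} + \infnorm{\pos{u}-\pos{w}} \leq \infnorm{\pos{v}-\pos{w}}$. The reverse inequality is the plain triangle inequality for the $\infty$-norm on $\RR^k$, so equality holds throughout, giving $\pos{u} \in \patharea{\pos{v}}{\pos{w}}$ as required.

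I do not anticipate any real obstacle: the argument is two applications of \cref{thm:basic_inequality}, one triangle inequality in $\RR^k$, and the single observation that $a$-coloring forces $\dist(v,w) = \infnorm{\pos{v}-\pos{w}}$. The only thing worth being careful about is emphasizing that the inclusion can fail without the coloring hypothesis, since in general $\dist(v,w)$ may strictly exceed $\infnorm{\pos{v}-\pos{w}}$ and the sandwich breaks.
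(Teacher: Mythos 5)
Your proof is correct and follows essentially the same strategy as the paper's: sandwich $\infnorm{\pos{v}-\pos{u}} + \infnorm{\pos{u}-\pos{w}}$ between $\infnorm{\pos{v}-\pos{w}}$ and $\dist(v,w)$ using \cref{thm:basic_inequality} twice together with $\dist(v,u)+\dist(u,w)=\dist(v,w)$ and the colored hypothesis. The only (cosmetic) difference is that you close the sandwich with the triangle inequality for the $\infty$-norm directly, whereas the paper pins down $\pos{u}^a$ coordinate-wise first; your version is, if anything, slightly cleaner.
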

\begin{proof}
	\Wilog{}, let $v \leq^b w$.
	Let~$u$ be an arbitrary vertex in~$\patharea{v}{w}$.
	Then, $\dis(v,w) = \dis(v,u)+\dis(u,w)$. 
	\cref{def:colors,thm:basic_inequality} yield
	\begin{align*}
		\pos{w}^b 	& = \pos{v}^b + \dis(v,w) = \pos{v}^b + \dis(v,u) + \dis(u,w) 
					\ge \pos{u}^b + \dis(u,w) \ge  \pos{w}^b.
	\end{align*}
	Hence, $\pos{u}^b = \pos{v}^b + \dis(v, u)$ and~$\pos{w}^b = \pos{u}^b + \dis(u,w)$.
	\cref{thm:basic_inequality} yields~${\dis(v, u) = \infnorm{\pos{v}-\pos{u}}}$ and~$\dis(u,w) = \infnorm{\pos{v}-\pos{u}}$.
	Hence, 
	\[ \infnorm{\pos{v}-\pos{w}} = \dis(v,w) = \dis(v,u)+\dis(u,w) = \infnorm{\pos{v}-\pos{u}} + \infnorm{\pos{u}-\pos{w}}.\]
	This yields $\pos{u} \in \patharea{\pos{v}}{\pos{w}}$ and thus~$\pos{\patharea{v}{w}} \subseteq \patharea{\pos{v}}{\pos{w}}$.
\end{proof}

We will usually be concerned with the projection of $\patharea{\pos{v}}{\pos{w}}$ to some set of coordinates~$I \subseteq [k]$.
Note in this context that $\left(\patharea{\pos{v}}{\pos{w}}\right)^I = \patharea{\pos{v}^I}{\pos{w}^I}$.
Of particular interest are projections to 2-dimensional subspaces.
Recall that the area defined by $\patharea{{x}}{{y}}$ for $x,y \in \NN^2$ is a rectangle in the plane whose sides form an angle of $45^\circ$ to the coordinate axes.
The following lemma lists necessary and sufficient conditions for those rectangles to intersect.
\begin{lemma}\label{lem:patharea-coord-intersect-iff}
	Let~$x, y, \hat{x}, \hat{y} \in \RR^2$.
	Then~$\patharea{x}{y} \cap \patharea{\hat{x}}{\hat{y}} \ne \emptyset$ if and only if all of the following hold:
	\begin{enumerate}[(i)]
		\item $\min \{x^1 - x^2, y^1 - y^2\} \le \max \{\hat{x}^1 - \hat{x}^2, \hat{y}^1 - \hat{y}^2\}$, \label{rectangle-intersect-i}
		\item $\min \{\hat{x}^1 - \hat{x}^2, \hat{y}^1 - \hat{y}^2\} \le \max \{x^1 - x^2, y^1 - y^2\}$, \label{rectangle-intersect-ii}
		\item $\min \{x^1 + x^2, y^1 + y^2\} \le \max \{\hat{x}^1 + \hat{x}^2, \hat{y}^1 + \hat{y}^2\}$, and \label{rectangle-intersect-iii}
		\item $\min \{\hat{x}^1 + \hat{x}^2, \hat{y}^1 + \hat{y}^2\} \le \max \{x^1 + x^2, y^1 + y^2\}$. \label{rectangle-intersect-iv}
	\end{enumerate}
\end{lemma}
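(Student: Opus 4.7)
The plan is to rotate the plane by 45 degrees so that the ``diamond'' rectangles of $\patharea{\cdot}{\cdot}$ become axis-aligned rectangles, at which point the intersection criterion reduces to a well-known fact about axis-parallel boxes in the plane.

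Concretely, I would introduce the linear change of coordinates $T\colon \RR^2 \to \RR^2$, $T(z) = (z^1 + z^2,\, z^1 - z^2)$. For any $a,b \in \RR$ the identity $|a+b| + |a-b| = 2\max(|a|,|b|)$ yields $\|x - z\|_\infty = \tfrac{1}{2}\bigl(|T(x)^1 - T(z)^1| + |T(x)^2 - T(z)^2|\bigr)$ for all $x, z \in \RR^2$. Substituting this identity into the defining equation $\|x-z\|_\infty + \|z-y\|_\infty = \|x-y\|_\infty$ of $\patharea{x}{y}$ and invoking the triangle inequality coordinate-wise on the right-hand side, the equation splits into the two independent conditions that $T(z)^1$ lies between $T(x)^1$ and $T(y)^1$, and $T(z)^2$ lies between $T(x)^2$ and $T(y)^2$. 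Hence
\[
T(\patharea{x}{y}) = \bigl[\min\{x^1+x^2, y^1+y^2\}, \max\{x^1+x^2, y^1+y^2\}\bigr] \times \bigl[\min\{x^1-x^2, y^1-y^2\}, \max\{x^1-x^2, y^1-y^2\}\bigr],
\]
which is an axis-aligned rectangle.

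Since $T$ is a bijection, $\patharea{x}{y}$ and $\patharea{\hat x}{\hat y}$ intersect iff $T(\patharea{x}{y})$ and $T(\patharea{\hat x}{\hat y})$ do. Two axis-aligned rectangles $[\alpha_1,\beta_1] \times [\alpha_2,\beta_2]$ and $[\hat\alpha_1,\hat\beta_1] \times [\hat\alpha_2,\hat\beta_2]$ intersect if and only if $\alpha_j \le \hat\beta_j$ and $\hat\alpha_j \le \beta_j$ for $j \in \{1,2\}$. Plugging in the explicit $\min/\max$ expressions above yields precisely conditions \ref{rectangle-intersect-iii}, \ref{rectangle-intersect-iv}, \ref{rectangle-intersect-i}, \ref{rectangle-intersect-ii}.

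The only non-routine step is the first one, namely correctly verifying that the change of coordinates turns an $L^\infty$-diamond into an axis-parallel box; I expect this to be the main place where one must be careful about signs and about separating the coordinate-wise triangle inequalities. Once that is done, the equivalence with the four stated inequalities is immediate from the elementary interval-intersection criterion.
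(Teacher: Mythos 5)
Your proof is correct and follows essentially the same route as the paper: rotate/shear the plane by $45^\circ$ so the diamonds become axis-parallel rectangles and then apply the standard interval-overlap criterion; the paper uses the matrix $\begin{psmallmatrix}1 & -1\\ 1 & 1\end{psmallmatrix}$ where you use $T(z)=(z^1+z^2,\,z^1-z^2)$, which is the same transformation up to a reflection. If anything, your verification via $\lvert a+b\rvert+\lvert a-b\rvert = 2\max(\lvert a\rvert,\lvert b\rvert)$ that the transformed set is exactly the claimed box is more explicit than the paper's appeal to rotation invariance.
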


\begin{proof}
		Let~$R_1, R_2 \subseteq \NN^2$ be two axis-parallel rectangles defined by the opposite corners~${q, r \in \NN^2}$ and~$\hat{q}, \hat{r} \in \NN^2$.
	It is easy to see that~$R_1$ and~$R_2$ intersect if and only if
	\begin{enumerate}[(a)]
		\item $\min \{q^1, r^1\} \le \max \{\hat{q}^1, \hat{r}^1\}$ and $\min \{\hat{q}^1, \hat{r}^1\} \le \max \{q^1, r^1\}$ \\ 
			(\ie{} there is an overlap in the first coordinate), and
		\item $\min \{q^2, r^2\} \le \max \{\hat{q}^2, \hat{r}^2\}$ and $\min \{\hat{q}^2, \hat{r}^2\} \le \max \{q^2, r^2\}$ \\ 
			(\ie{} there is an overlap in the second coordinate).
	\end{enumerate}
	Since the intersection of two rectangles is invariant under rotation and scaling, we simply rotate~$\patharea{x}{y}$ and~$\patharea{\hat{x}}{\hat{y}}$ by 45$^\circ$ (and scale it by factor~$\sqrt{2}$) by multiplying all vectors with the matrix
	\[{\displaystyle R={\begin{bmatrix}1 &-1 \\1 &1 \\\end{bmatrix}}}.\]
	Now the above characterization for axis-parallel rectangles translates into the conditions stated in the lemma.
\end{proof}

The next lemma states that the distance between~$x$ and any~$z \in \patharea{x}{y}$ where~$x$ and~$y$ have distance~$|y^c - x^c|$ is at most~$|z^c - x^c|$.
See \cref{fig:diamond_condition} for an illustration.

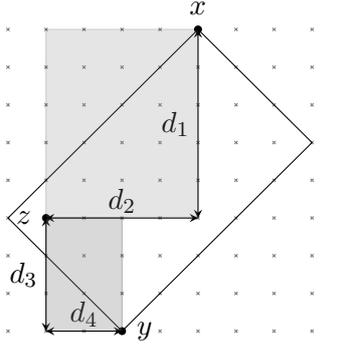
\begin{figure}
	\centering
	\begin{tikzpicture}
			\foreach \x in {-1.5,-1,...,3}{
			\foreach \y in {3,3.5,...,7}{
				\node[cross, gray] at (\x,\y){};
			}
		}

		\diamondup{0}{3}{1}{7};
		\node[2Dpoint,label=above:$x$] at (1,7) {};
		\node[2Dpoint,label=right:$y$] at (0,3) {};
		\node[2Dpoint,label=left:$z$] at (-1,4.5) {};
		\draw[<->,>=stealth] (1,7) to (1,4.5);
		\draw[<->,>=stealth] (-1,4.5) to (1,4.5);
		\draw[<->,>=stealth] (-1,3) to (0,3);
		\draw[<->,>=stealth] (-1,4.5) to (-1,3);
		\node[small,label=left:$d_1$] at (1.1,5.75) {};
		\node[small,label=below:$d_2$] at (0,5.1) {};
		\node[small,label=left:$d_3$] at (-.9,3.75) {};
		\node[small,label=below:$d_4$] at (-.5,3.6) {};
		\draw[fill=gray,opacity=0.2] (1,7) -- (1,4.5) -- (-1,4.5) -- (-1,7) -- cycle;
		\draw[fill=gray,opacity=0.3] (-1,4.5) -- (-1,3) -- (0,3) -- (0,4.5) -- cycle;
	\end{tikzpicture}
	\caption{The rectangle~$\patharea{x}{y}$ spanned by two points~$x$ and~$y$ in two dimensions and a point~${z \in \patharea{x}{y}}$.
		Note that~$\infnorm{y-x}$ is the vertical distance between~$x$ and~$y$.
		\cref{thm:diamond_condition} states that~$d_1 \geq d_2$ and~$d_3 \geq d_4$, that is, both gray rectangles are at least as high as they are wide.
	}
	\label{fig:diamond_condition}
\end{figure}
\begin{lemma}\label{thm:diamond_condition}
	Let~$b, c \in [k]$ and let~$x, y \in \NN^k$ with~$\infnorm{y-x} = y^c - x^c$.
	Then, it holds for all~$z \in \patharea{x}{y}$ that~$z^c - x^c \geq \abs{z^b - x^b} \geq 0$
	and~$y^c - z^c \geq \abs{y^b - z^b} \geq 0$.
\end{lemma}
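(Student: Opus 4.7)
The plan is to squeeze everything out of the two inequalities hidden inside the definition of $\patharea{x}{y}$. By definition, $z \in \patharea{x}{y}$ means $\infnorm{x-z} + \infnorm{z-y} = \infnorm{x-y}$, and by hypothesis $\infnorm{x-y} = y^a - x^a$. I would start by writing down the two elementary bounds $\infnorm{x-z} \ge |z^a - x^a|$ and $\infnorm{z-y} \ge |y^a - z^a|$ coming directly from the fact that any coordinate difference is bounded by the supremum norm.

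The main observation is then that the triangle inequality on the reals gives
\[
y^a - x^a = \infnorm{x-z} + \infnorm{z-y} \;\ge\; |z^a - x^a| + |y^a - z^a| \;\ge\; |(z^a - x^a) + (y^a - z^a)| = y^a - x^a,
\]
so every inequality above must be an equality. From the equality $|z^a - x^a| + |y^a - z^a| = (z^a - x^a) + (y^a - z^a)$, I read off that both summands are nonnegative, i.e.\ $z^a - x^a \ge 0$ and $y^a - z^a \ge 0$. From equality in the first step I furthermore get $\infnorm{x-z} = z^a - x^a$ and $\infnorm{z-y} = y^a - z^a$.

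To finish, I apply the elementary coordinate bound once more, now for the index $b$: for any coordinate we have $|z^b - x^b| \le \infnorm{x-z}$ and $|y^b - z^b| \le \infnorm{z-y}$. Substituting the two equalities established in the previous step yields
\[
z^a - x^a \;=\; \infnorm{x-z} \;\ge\; |z^b - x^b| \;\ge\; 0, \qquad y^a - z^a \;=\; \infnorm{z-y} \;\ge\; |y^b - z^b| \;\ge\; 0,
\]
which is exactly the claim. There is no real obstacle here; the whole proof is just careful bookkeeping of when each triangle inequality collapses to an equality, so I expect the argument to be a few lines long.
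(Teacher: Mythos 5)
Your proof is correct and follows essentially the same route as the paper: both establish the chain $y^a-x^a=\infnorm{x-z}+\infnorm{z-y}\ge\abs{z^a-x^a}+\abs{y^a-z^a}\ge y^a-x^a$, conclude equality throughout, and then apply the coordinate bound for $b$. The only cosmetic difference is that you invoke the triangle inequality $\abs{u}+\abs{v}\ge\abs{u+v}$ where the paper uses $\abs{t}\ge t$ directly, and your final pairing of $\infnorm{x-z}$ with $\abs{z^b-x^b}$ (and $\infnorm{z-y}$ with $\abs{y^b-z^b}$) is in fact the correct one, whereas the paper's last sentence swaps these by a small typo.
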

\begin{proof} 
	By assumption and the definition of~$\patharea{x}{y}$, it holds that
	\begin{align*}
		y^c - x^c &= \infnorm{y-x} = \infnorm{y-z} + \infnorm{z-x} \\
		&\geq \abs{y^c-z^c} + \abs{z^c-x^c} \geq (y^c - z^c) + (z^c -  x^c)\\
		&= y^c - x^c.
	\end{align*}
	Thus, we have equalities everywhere.
	In particular, we have~$y^c - z^c = \infnorm{y-z} \ge |y^b - z^b|$ and~${z^c - x^c = \infnorm{z-x} \ge |z^b - x^b|}$ (as shown by the equality between the last term in the first row and the first term in the second row).
\end{proof}

We next formalize the lines we used in \cref{fig:diamond-introduction} to connect the vectors of vertices in a path.
To this end, for any path $P = v_1v_2\ldots v_i$ we define~${\zeta(P) \subset \RR^k}$ as the piecewise linear curve connecting the points of~$\pos{P}$ in the order given by $P$.
Recall that~$C(P)$ denotes the set of all colors~$b$ such that~$P$ is~$b$-colored.
The next observation states that~$\zeta(P)^{C(P)}$ is a straight line, which is equivalent to the statement that~$P$ is strictly monotone in each coordinate in~$C(P)$.

\begin{observation}\label{thm:diagonal}
	Let $P$ be a colored path.
	Then $\zeta(P)^{C(P)}$ is a straight line segment.
\end{observation}
\begin{proof}
	Let $\ell \defeq \infnorm{\pos{s_P}- \pos{t_P}}$ and $k' \defeq \abs{C(P)}$.
	The path~$P$ contains exactly~$\ell$ edges,
	each of which has an Euclidean length of at most~$\sqrt{k'}$ in the projection~$\zeta(P)^{C(P)}$.
	Hence, the length of~$\zeta(P)^{C(P)}$ is at most~$\ell \cdot \sqrt{k'}$
	which is exactly the Euclidean distance between~$\pos{s_P}^{C(P)}$ and~$\pos{t_P}^{C(P)}$.
	Thus, $\zeta(P)^{C(P)}$ is a straight line segment.
\end{proof}

\Cref{thm:diagonal} has another interesting consequence.
The intersection of two paths $P$ and~$Q$ in the~\mbox{${(C(P) \cup C(Q))}$-projection}
is always a straight line segment with an angle of~$45^\circ$ to the coordinate axes as shown in \cref{fig:diamond-introduction} (right side) and \cref{fig:areas} (case (ii)).
\begin{lemma}\label{thm:crossing_unique}
	Let~$P$ and~$Q$ be two colored paths,
	and~$C \subseteq C(P) \cup C(Q)$.
	Then~$\zeta(P)^C \cap \zeta(Q)^C$ is a (possibly empty) straight line segment.
\end{lemma}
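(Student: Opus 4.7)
The plan is to prove two things: that all points of $\zeta(P)^C \cap \zeta(Q)^C$ lie on a common affine line $L \subseteq \RR^C$, and that the intersection is a connected subinterval of $L$. Set $C_P := C \cap C(P)$ and $C_Q := C \cap C(Q)$, so that $C = C_P \cup C_Q$ by assumption. I parametrize $\zeta(P)$ and $\zeta(Q)$ linearly by position along the respective paths; by \cref{thm:diagonal} and the definition of $C(P)$, the map $t_P \mapsto \zeta(P)(t_P)^a$ is affine of slope $\sigma_P(a) \in \{\pm 1\}$ whenever $a \in C(P)$, and otherwise merely piecewise-linear of slope in $[-1,1]$, with symmetric statements for $Q$.

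To identify the direction of $L$, pick any two intersection points $x,y$ with preimages $p_x,p_y$ on $P$ and $q_x,q_y$ on $Q$. Monotonicity of $P$ in every $a \in C_P$ forces $|y^a - x^a|$ to equal the length $\Delta_P$ of $P[p_x,p_y]$, while \cref{thm:basic_inequality} applied to $P[p_x,p_y]$ yields $|y^a - x^a| \leq \Delta_P$ for every $a \in C$. Mirroring both inequalities on $Q$ gives $\Delta_P = \Delta_Q =: \Delta$, and then $(y-x)|_{C_P} = \pm \Delta\,\sigma_P|_{C_P}$ and $(y-x)|_{C_Q} = \pm \Delta\,\sigma_Q|_{C_Q}$, leaving at most two candidate directions. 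A three-point additivity argument rules out the mixed case: for three distinct intersection points, the length-additivity identity $\Delta_{ij} + \Delta_{jk} = \Delta_{ik}$ forced by the ordering of $p_i,p_j,p_k$ along $P$ is incompatible with the analogous identity along $Q$ unless the two orderings agree up to global reversal, which is exactly when the two candidate directions coincide. Hence all pairwise displacements are parallel and the intersection lies on a single affine line $L$.

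For connectedness I parametrize $L$ by a scalar $u$ and analyze the constraints $x(u) \in \zeta(P)^C$ and $x(u) \in \zeta(Q)^C$. The monotonicity step above determines $t_P$ and $t_Q$ as affine functions of $u$ (via any $a \in C_P$ or $a \in C_Q$ respectively). Each remaining coordinate $a \in C \setminus C_P$ then gives a piecewise-linear equation $\zeta(P)(t_P(u))^a = x(u)^a$ whose two sides have slopes in $[-1,1]$ and $\pm 1$ in $u$; the difference is monotone, so its zero set is a (possibly empty or degenerate) interval. Symmetrically for $a \in C \setminus C_Q$. Combined with the domain constraints $t_P(u) \in [0,\ell_P]$ and $t_Q(u) \in [0,\ell_Q]$, the set of admissible $u$ is a finite intersection of intervals, hence an interval, and its image in $L$ is the desired straight line segment.

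The hard part will be the three-point additivity step that collapses two candidate directions into one, since the $L^\infty$ bound from \cref{thm:basic_inequality} only fixes the absolute magnitudes on $C_P$ and $C_Q$ separately; everything else reduces to routine bookkeeping once the line $L$ is identified. The edge case when one of $C_P, C_Q$ is empty needs slightly more care because the affine pinning of $t_P$ or $t_Q$ is no longer read off from a single $C$-coordinate, but the equality $\Delta_P = \Delta_Q$ still holds (via the $L^\infty$ bound in the non-degenerate direction) and the interval-of-zeros argument carries through unchanged.
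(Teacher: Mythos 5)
Your argument is correct where the paper actually uses the lemma, namely when $C \cap C(P)$ and $C \cap C(Q)$ are both non-empty, and its engine is the same as the paper's: a colored coordinate of a path advances by exactly one per edge while every coordinate advances by at most one, so for any two intersection points $x,y$ one gets $\abs{x^a - y^a} = \abs{x^b - y^b}$ for all $a \in C \cap C(P)$ and $b \in C \cap C(Q)$ (your $\Delta_P = \Delta_Q$ step is precisely the chain of inequalities compressed into the paper's line ``$\abs{x^a-y^a} = \infnorm{x-y} = \abs{x^b-y^b}$''). The difference is in what happens afterwards. The paper stops at this pairwise magnitude identity and appeals to \cref{thm:diagonal}; it neither resolves the sign ambiguity (collinearity of three pairwise ``diagonal'' points) nor argues that the intersection is connected, so as written it only establishes that any two intersection points differ diagonally. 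Your three-point additivity argument (equality in the triangle inequality for the $Q$-parameters forces the two path orderings to agree up to global reversal) closes the first gap --- collinearity can alternatively be extracted purely algebraically from pairwise diagonality, but your route is sound --- and your monotone piecewise-linear zero-set argument supplies the connectedness claim, which the paper does not address at all. So: same approach, carried substantially further than the paper's own proof.

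One correction: your closing remark that the degenerate case $C \cap C(Q) = \emptyset$ ``carries through unchanged'' is not right. There you only obtain $\Delta_P \le \Delta_Q$ (no coordinate of $C$ pins the arc length along $Q$ from above), $t_Q$ is not an affine function of $u$, and the interval argument for membership in $\zeta(Q)^C$ collapses. In fact the conclusion itself is in doubt there: with $C = \{1,2\} \subseteq C(P)$, with $\zeta(P)^{1,2}$ the diagonal from $(0,0)$ to $(5,5)$, and with a $Q$ whose $\{1,2\}$-projected vertex positions are $(1,1),(2,1),(3,2),(3,3)$ (all local Lipschitz constraints are satisfied), the intersection consists of the two isolated points $(1,1)$ and $(3,3)$. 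This does not affect the paper, since the lemma is only ever invoked with a color of each path inside $C$ and the paper's own proof tacitly assumes the same, but you should state that hypothesis explicitly rather than claim the degenerate case for free.
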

\begin{proof}
	Note that $\zeta(P)$ and $\zeta(Q)$ are piecewise linear curves.
	According to~\cref{thm:diamond_inclusion}, for any two points $x, y \in \zeta(P)$,
	it holds that $\infnorm{x-y} = \abs{x^c - y^c}$ for all~$c \in C(P)$
	and for any two points~$x', y' \in \zeta(Q)$, it holds that~${\infnorm{x'-y'} = \abs{x'^b - y'^b}}$ for all~$b \in C(Q)$.
	Hence, for any~$z_1, z_2 \in \RR^k$ with~${z_1^C,z_2^C \in \zeta(P)^C \cap \zeta(Q)^C}$, each~$c \in C \cap C(P)$, and each~$b \in C \cap C(Q)$, it follows that 
	\[\abs{z_1^c - z_2^c} = \infnorm{z_1-z_2} = \abs{z_1^b - z_2^b}.\]
	The claim now follows analogously to the proof of~\cref{thm:diagonal}.
\end{proof}

Note that even if~$\zeta(P)^{C(P) \cup C(Q)} \cap \zeta(Q)^{C(P) \cup C(Q)}$ is non-empty, then it does not need to contain points from~$\NN^{\abs{C(P) \cup C(Q)}}$ as can be seen in \cref{fig:areas} in case (i).

\subsection{Marbles}\label{ssec:marbles-2D}

The following definition starts to formalize the notion of marbles, that is, the special vertices we guess in the different cases in \cref{fig:areas}.

\paragraph*{Crossing Paths.}
We start with the two cases in which the lines of~$P$ and~$Q$ cross (cases (i) and (ii)).

\begin{definition}\label{def:alpha-beta}
	Let~$P$ and $Q$ be two colored paths, let~$b \in C(P)$, and let~$c \in C(Q)$.
	The paths~$P$ and~$Q$ are \emph{$\{b,c\}$-crossing} if the intersection
	\[X \defeq \zeta(P)^{b,c} \cap \zeta(Q)^{b,c}\]
	is non-empty and they are~\emph{$\{b,c\}$-non-crossing} otherwise.

	If $\pos{P}^{b,c} \cap X \neq \emptyset$, 
	then we define $\alpha^{b,c}_{P}(Q)$ and~$\omega^{b,c}_{P}(Q)$ to be the first and last vertex~$v$ of~$P$ with $v^{b,c} \in X$, respectively.
	In all other cases set~$\alpha^{a,b}_P(Q) := \omega^{a,b}_P(Q) := \bot$.
	If $P$ and $Q$ are~$\{b,c\}$-crossing,
	we further define~$\partial^{b,c}_P(Q)$ and~$\varpi^{b,c}_P(Q)$ to be the last vertex before and the first vertex after that intersection, respectively.
	If no such vertex exists, we set~$\partial^{b,c}_P(Q) := \bot$ (resp.\ $\varpi^{b,c}_P(Q) := \bot$).
	In all these notations we will omit $a$, $b$, and $Q$ if they are clear from the context.
\end{definition}
Note that~$\partial^{b,c}_P(Q) := \bot$ can happen in two cases: either~$P$ and~$Q$ are $\{b,c\}$-non-crossing or~$\alpha^{b,c}_{P}(Q)$ is an endpoint of~$P$.
Moreover, subpaths between the respective~$\alpha$- and~$\omega$-vertices are by \cref{thm:crossing_unique} straight lines and~$b,c$-colored.

\begin{observation}\label{thm:crossing_colored}
If $P$ and $Q$ are two paths with $\alpha^{b,c}_P(Q) \neq \bot$, then
\[P[\alpha_P^{b,c}(Q), \omega_P^{b,c}(Q)] =^{b,c} Q[\alpha_Q^{b,c}(P), \omega_Q^{b,c}(P)] =^{b,c} (\patharea{\pos{\alpha_P^{b,c}(Q)}}{\pos{\omega_P^{b,c}(Q)}}) \cap \NN^k.\]
In particular, both of these subpaths are $b,c$-colored.
\end{observation}

It remains to consider the subpaths between~$s$- and~$\alpha$-vertices and between~$\omega$- and~$t$-vertices.
By~\cref{thm:diamond_inclusion}, these have to lie in the hyperrectangle areas
\begin{equation}
\patharea{\pos{s_P}}{\pos{\partial^{b,c}_P(Q)}}, \patharea{\pos{\varpi^{b,c}_P(Q)}}{\pos{t_P}}, \patharea{\pos{s_Q}}{\pos{\partial^{b,c}_Q(P)}}, \text{ and }\patharea{\pos{\varpi^{b,c}_Q(P)}}{\pos{t_Q}}. \label{eq:areas}
\end{equation}
Cases (i) and (ii) in \cref{fig:areas} suggest that these four areas are pairwise disjoint.
We will show that this is indeed the case and, to this end, we show the following two observations.
The first one states that the~$\partial$-vertex on~$P$ has a~$b$-coordinate that is at most the~$b$-coordinate of the~$\partial$-vertex on~$Q$, where~$b$ is the ``original'' color of~$P$.
Note that this~$\partial$-vertex is right before the respective~$\alpha$-vertex or before the single crossing point with non-integer coordinates.
Since~$P$ is strictly~$b$-monotone, the path~$Q$ can at most increase or decrease as fast as~$P$ from the point of intersection.

\begin{observation}\label{obs:partial-a-b-relation} 
	Let~$P$ and $Q$ be two $\{b,c\}$-crossing paths with~$\partial_P^{b,c}(Q), \varpi_P^{b,c}(Q) \neq \bot$. 
	If~$P$ is a subpath of a shortest~$s_b$-$t_b$-path and~$Q$ is a subpath of a shortest~$s_c$-$t_c$-path, then
	\begin{align*}
	\pos{\partial^{b,c}_P(Q)}^b \leq \pos{\partial^{b,c}_Q(P)}^b,\ \pos{\partial^{b,c}_Q(P)}^c \leq \pos{\partial^{b,c}_P(Q)}^c,\ \pos{\varpi^{b,c}_P(Q)}^b \geq \pos{\varpi^{b,c}_Q(P)}^b,  \text{ and }\pos{\varpi^{b,c}_Q(P)}^c \geq \pos{\varpi^{b,c}_P(Q)}^c.
	\end{align*}
\end{observation}
\begin{proof} 
	Let~$z \in \zeta(P)^{b,c} \cap \zeta(Q)^{b,c} \subseteq \RR^2$ have minimal $b$-coordinate.
	Note that
	\[\infnorm*{z - \pos{\partial^{b,c}_P(Q)}^{b,c}} = \infnorm*{z - \pos{\partial^{b,c}_Q(P)}^{b,c}}.\]
	Since $\zeta(P)$ is strictly increasing in its $b$-coordinate, we can infer that
	\[z^b - \pos{\partial^{b,c}_P(Q)}^b = \infnorm*{z - \pos{\partial^{b,c}_P(Q)}^{b,c}} = \infnorm*{z - \pos{\partial^{b,c}_Q(P)}^{b,c}} \geq z^b - \pos{\partial^{b,c}_Q(P)}^b.\]
	This yields $\pos{\partial^{b,c}_P(Q)}^b \leq \pos{\partial^{b,c}_Q(P)}^b$ and the other three inequalities follow analogously.
\end{proof}

The second observation is a simple but useful reformulation of \cref{thm:basic_inequality}.

\begin{observation}\label{obs:partial-a-b-to-s_a-relation}
	Let~$b,c \in [k]$ and let~$(v, w)$ be a $b$-colored pair of vertices with $v <^b w$.
	Then~$\pos{w}^b - \pos{w}^c \ge \pos{v}^b - \pos{v}^c$.
\end{observation}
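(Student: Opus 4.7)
The plan is to unpack the definition of an $a$-colored pair and then apply the trivial bound that the infinity norm dominates any single coordinate difference. Concretely, since $(v,w)$ is $a$-colored and $v <^a w$, Definition~\ref{def:colors} gives
\[ \pos{w}^a - \pos{v}^a \;=\; \abs{\pos{w}^a - \pos{v}^a} \;=\; \infnorm{\pos{v}-\pos{w}}. \]
On the other hand, for the $b$-coordinate we have the obvious estimate
\[ \infnorm{\pos{v}-\pos{w}} \;\ge\; \abs{\pos{w}^b - \pos{v}^b} \;\ge\; \pos{w}^b - \pos{v}^b. \]
Chaining these yields $\pos{w}^a - \pos{v}^a \ge \pos{w}^b - \pos{v}^b$, which, rearranged, is exactly the claim $\pos{w}^a - \pos{w}^b \ge \pos{v}^a - \pos{v}^b$.

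There is essentially no obstacle here; the whole statement is a one-line rewriting once one has observed that being $a$-colored forces the $a$-gap between $\pos{v}$ and $\pos{w}$ to attain the infinity norm, while the $b$-gap is bounded by the same infinity norm. Note that Lemma~\ref{thm:basic_inequality} is not even needed, because the coloring condition already encodes the equality we use. I would therefore present the proof as the two-line chain above, without invoking any further lemma.
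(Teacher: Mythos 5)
Your proof is correct and is essentially the same one-line argument as in the paper: the paper writes $\pos{w}^b - \pos{v}^b \le \dist(v,w) = \pos{w}^a - \pos{v}^a$ via \cref{thm:basic_inequality}, whereas you stay with $\infnorm{\pos{v}-\pos{w}}$ directly, which the coloring definition already equates to the $a$-gap. The difference is purely cosmetic.
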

\begin{proof}
	By \cref{thm:basic_inequality}, $\pos{w}^c - \pos{v}^c \leq \dis(v, w) = \pos{w}^b - \pos{v}^b$.
	A simple arithmetic reformulation yields~$\pos{w}^c -\pos{w}^b \leq \pos{v}^c - \pos{v}^b$ and multiplying both sides with~$-1$ completes the proof.
\end{proof}

We are now in the position to prove the statement that the four areas defined in Term~(\ref{eq:areas}) are pairwise disjoint.

\begin{lemma}\label{lem:four-diamonds}
		Let $P$ and $Q$ be two $\{b,c\}$-crossing paths.
	The sets
	\[\left(\patharea{\pos{s_P}}{\pos{\partial^{b,c}_P(Q)}}\right)^{b,c}\hspace*{-.1cm}, \left(\patharea{\pos{\varpi^{b,c}_P(Q)}}{\pos{t_P}}\right)^{b,c}\hspace*{-.1cm}, \left(\patharea{\pos{s_Q}}{\pos{\partial^{b,c}_Q(P)}}\right)^{b,c}\hspace*{-.1cm}, \text{\;and } \left(\patharea{\pos{\varpi^{b,c}_Q(P)}}{\pos{t_Q}}\right)^{b,c} \]
	are pairwise disjoint (or undefined).
\end{lemma}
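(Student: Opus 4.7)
The plan is to verify all six pairwise disjointnesses uniformly by changing to the rotated coordinates $(u, v) := (a+b, a-b)$, in which each diamond $\patharea{\cdot}{\cdot}^{a,b}$ becomes an axis-aligned rectangle and the $45^\circ$ crossing segment $X := \zeta(P)^{a,b} \cap \zeta(Q)^{a,b}$ (a line segment by \cref{thm:crossing_unique}) becomes axis-aligned. WLOG, orient $P$ and $Q$ so that $\pos{s_P}^a < \pos{t_P}^a$ and $\pos{s_Q}^b < \pos{t_Q}^b$, and WLOG $X$ lies on $\{v = v_0\}$ with $u$-range $[u_1, u_2]$; the complementary case of $X$ on $\{u = u_0\}$ is symmetric by exchanging the roles of $u$ and $v$.

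First I would show that both $u$ and $v$ are non-decreasing along $P$, while along $Q$ the coordinate $u$ is non-decreasing and $v$ is non-increasing. The statement for $v$ along $P$ is precisely \cref{obs:partial-a-b-to-s_a-relation}; the companion inequality $\pos{v}^a + \pos{v}^b \leq \pos{w}^a + \pos{w}^b$ for an $a$-colored pair $(v,w)$ with $v <^a w$ follows by repeating the same proof with $-b$ in place of $b$, and analogous inequalities along $Q$ give the other two statements. Combining these monotonicities with \cref{obs:partial-a-b-relation} (which yields $v(\partial_P) \le v_0 \le v(\partial_Q)$, and the mirrored statement for $\varpi$) confines the four diamonds to the four closed quadrants cut out by the lines $\{v = v_0\}$, $\{u = u_1\}$, and $\{u = u_2\}$: $\patharea{\pos{s_P}}{\pos{\partial_P}}^{a,b} \subseteq \{u \le u_1, v \le v_0\}$, $\patharea{\pos{\varpi_P}}{\pos{t_P}}^{a,b} \subseteq \{u \ge u_2, v \ge v_0\}$, $\patharea{\pos{s_Q}}{\pos{\partial_Q}}^{a,b} \subseteq \{u \le u_1, v \ge v_0\}$, and $\patharea{\pos{\varpi_Q}}{\pos{t_Q}}^{a,b} \subseteq \{u \ge u_2, v \le v_0\}$.

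Given this layout, four of the six pairs are separated in $u$ (using $u_1 < u_2$) and the remaining two, namely the $(\partial_P, \partial_Q)$- and $(\varpi_P, \varpi_Q)$-pairs, are separated in $v$ by the line $\{v = v_0\}$. The main obstacle is ruling out equality on the boundary lines $u = u_1$, $u = u_2$, or $v = v_0$; I expect this to follow from $\pos{\partial_P}^{a,b}, \pos{\varpi_P}^{a,b}, \pos{\partial_Q}^{a,b}, \pos{\varpi_Q}^{a,b} \notin X$ (by definition) together with the fact that each edge of $P$ or $Q$ changes each of the $a$- and $b$-coordinates by at most $1$, which pins down the strict inequalities $u(\partial_P) < u_1$, $u(\varpi_P) > u_2$, and their $Q$-analogues by tracing the edge joining the relevant endpoint to its neighbour on $X$. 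A uniform fallback, should this bookkeeping turn out delicate, is to apply \cref{lem:patharea-coord-intersect-iff} directly to each of the six pairs and verify that the cited observations together with the strict monotonicities force at least one of the four conditions to fail.
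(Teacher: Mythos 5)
Your overall framework — rotate to $(u,v) = (a+b,\,a-b)$, note that every diamond $\patharea{x}{y}$ becomes an axis-aligned box, use \cref{obs:partial-a-b-to-s_a-relation} (and its $u$-analogue) to get monotonicity of $u,v$ along the two paths, and then separate the boxes coordinate-wise — is essentially the paper's proof: \cref{lem:patharea-coord-intersect-iff} \emph{is} this $45^\circ$ rotation, and its four conditions are exactly the $u$- and $v$-range overlap tests for your boxes. The quadrant picture around $X$ is a reasonable way to organize the six checks, and the weak containments you state do hold.

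The gap is in the strictness bookkeeping, which is precisely where the paper's proof does its real work. The specific strict inequalities you announce are false in degenerate configurations. If $X$ is a single non-integer point, then $u_1 = u_2$, so "four of the six pairs are separated in $u$ using $u_1 < u_2$" fails as stated; and if the edge of $P$ from $\partial_P$ into the crossing has direction $(+1,-1)$ in $(a,b)$ (so $u$ is constant along it), then $u(\partial_P) = u_1$, refuting your claimed $u(\partial_P) < u_1$. Symmetrically, $v(\partial_P) = v_0$ is possible when that edge has direction $(+1,+1)$, so the local "trace the edge to its neighbour on $X$" argument cannot by itself give $v(\partial_P) < v_0 < v(\partial_Q)$; what is true is only that \emph{not both} of $\partial_P, \partial_Q$ can sit on the line $\{v = v_0\}$, and establishing that requires a non-local argument. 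This is exactly what the paper supplies: $\pos{\partial_P}^{a,b} \neq \pos{\partial_Q}^{a,b}$ (else that common point would lie on both curves, hence in $X$), so \cref{obs:partial-a-b-relation} gives $\partial_P >^b \partial_Q$ or $\partial_Q >^a \partial_P$ with at least one strict, hence the single strict inequality $\pos{\partial_P}^a - \pos{\partial_P}^b < \pos{\partial_Q}^a - \pos{\partial_Q}^b$, i.e.\ $v(\partial_P) < v(\partial_Q)$, which separates the two $v$-ranges without needing $v_0$ to lie strictly between them. Your "uniform fallback" of checking \cref{lem:patharea-coord-intersect-iff} pair by pair is indeed the right move, but it is not a fallback — it is the proof, and without the $\pos{\partial_P}^{a,b} \neq \pos{\partial_Q}^{a,b}$ observation the required strictness does not follow from monotonicity and edge-length-one considerations alone. (The same-path pairs, by contrast, are easy: there one separates strictly in the $a$- resp.\ $b$-coordinate, since $\pos{\partial_P}^a < \pos{\varpi_P}^a$, which for axis-aligned boxes forces disjointness in $u$ or in $v$.)
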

\begin{proof}
	\Wilog, 
	let~$P$ be $b$-colored,
	let~$Q$ be~$c$-colored,
	let~$s_P <^b t_P$, and
	let~$s_Q <^c t_Q$.
	Recall that~$s_P$ and~$t_P$ are the start and end vertices of~$P$, respectively.
	We further assume that all above sets are defined, that is, none of the described end points is~$\bot$.
	By \cref{thm:diamond_condition}, for any $x \in \patharea{\pos{s_P}}{\pos{\partial_P}}$ and $y \in \patharea{\pos{\varpi_P}}{\pos{t_P}}$
	it holds that~$x^b \leq \pos{\partial_P}^b < \pos{\varpi_P}^b \leq  y^b$,
	and thus~$\left(\patharea{\pos{s_P}}{\pos{\partial_P}}\right)^{b,c} \cap \left(\patharea{\pos{\varpi_P}}{\pos{t_P}}\right)^{b,c} = \emptyset$.
	An analogous argument holds for~$\left(\patharea{\pos{s_P}}{\pos{\partial_P}}\right)^{b,c}$ and~$\left(\patharea{\pos{\varpi_P}}{\pos{t_P}}\right)^{b,c}$.
	
	We will now use~\cref{lem:patharea-coord-intersect-iff} to show that~$\left(\patharea{\pos{s_P}}{\pos{\partial_P}}\right)^{b,c} \cap \left(\patharea{\pos{s_Q}}{\pos{\partial_Q}}\right)^{b,c} = \emptyset$.
	Since all other remaining cases are analogous, this will conclude the proof.
	By \cref{obs:partial-a-b-to-s_a-relation}, it holds that
	\begin{align*}
	\max\{\pos{s_P}^b - \pos{s_P}^c, \pos{\partial_P}^b - \pos{\partial_P}^c\} &=  \pos{\partial_P}^b - \pos{\partial_P}^c \text{ and}\\
	\min\{\pos{s_Q}^b - \pos{s_Q}^c, \pos{\partial_Q}^b - \pos{\partial_Q}^c\} &= \pos{\partial_Q}^b - \pos{\partial_Q}^c.
	\end{align*}
	Observe that~$\pos{\partial_P}^{b,c} \neq \pos{\partial_Q}^{b,c}$ as otherwise~$\partial_P$ would lie on the intersection of~$P$ and~$Q$, a contradiction to the definition of~$\partial_P$.
	Hence~$\pos{\partial_P}^{b} \neq \pos{\partial_Q}^{b}$ or~$\pos{\partial_P}^{c} \neq \pos{\partial_Q}^{c}$.
	In the former case, \cref{obs:partial-a-b-relation} states that~$\partial_Q >^b \partial_P$ and in the latter case it states~$\partial_P >^c \partial_Q$.
	Hence, $\pos{\partial_P}^c + \pos{\partial_Q}^b > \pos{\partial_P}^b + \pos{\partial_Q}^c$
	and thus $\pos{\partial_P}^b - \pos{\partial_P}^c < \pos{\partial_Q}^b - \pos{\partial_Q}^c$.
		
	Setting~$x = \pos{s_P}^{b,c}$, $y = \pos{\partial_P}^{b,c}$, $\hat{x} = \pos{s_Q}^{b,c}$, and~$\hat{y} = \pos{\partial_Q}^{b,c}$ in \cref{lem:patharea-coord-intersect-iff} then violates condition~\ref{rectangle-intersect-ii} and hence~$\left(\patharea{\pos{s_P}}{\pos{\partial_P}}\right)^{b,c} \cap \left(\patharea{\pos{s_Q}}{\pos{\partial_Q}}\right)^{b,c} = \emptyset$.
\end{proof}

\paragraph*{Noncrossing Paths.}

We continue with the definition of marbles for the cases where the two paths~$P$ and~$Q$ are~$\{b,c\}$-non-crossing.
\cref{fig:areas} shows that even in this case~$\patharea{s_P}{t_P}$ and~$\patharea{s_Q}{t_Q}$ in general are not disjoint (case (iii)).
Since the two bottom cases are distinguished by the intersection of~$\patharea{s_P}{t_P}$ and $\patharea{s_Q}{t_Q}$, we start with a definition of this intersection.

\begin{definition} \label{def:common-area}
	Let~$b,c \in [k]$.
	Let~$P$ be a~$b$-colored path and let~$Q$ be a~$c$-colored path.
	The \emph{common $b,c$-area} of~$P$ and~$Q$ is
	\[\Delta^{b,c}(P,Q) \defeq (\patharea{\pos{s_P}}{\pos{t_P}})^{b,c} \cap (\patharea{\pos{s_Q}}{\pos{t_Q}})^{b,c}.\]
\end{definition}

Note that if~$\Delta^{b,c}(P,Q) = \emptyset$, then by \cref{thm:diamond_inclusion}, they do not share vertices with common vectors and hence they do not share common vertices.
If~$\Delta^{b,c}(P,Q) \neq \emptyset$ and~$P$ and~$Q$ are~$\{b,c\}$-crossing, then we can use \cref{def:alpha-beta} to define the marbles.
It hence remains to study the case where~$\Delta^{b,c}(P,Q) \neq \emptyset$ and~$P$ and~$Q$ are~$\{b,c\}$-non-crossing.
In this case we need at most one marble per path and this marble is defined as follows.

\begin{definition}
	\label{def:delta}
	Let $P$ be a $b$-colored path, $Q$ be a $c$-colored path, and let \wilog{} be~$s_Q <^b t_Q$.
	Define
	\[B \defeq \set{v \in V \mid v=^b s_Q \land v <^c s_Q} \cup \set{v \in V \mid v=^b t_Q \land v>^c t_Q}.\]
	If~$P \cap B \neq \emptyset$, then $\delta^{b,c}_P(Q)$ is the unique vertex in~$P \cap B$.
	If~$P \cap B = \emptyset$, then~$\delta^{b,c}_P(Q) = \bot$.
\end{definition}

\begin{observation}
	$\delta^{b,c}_P(Q)$ is well-defined.
\end{observation}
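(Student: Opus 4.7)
The plan is to show that $|P \cap B| \leq 1$, which directly gives the uniqueness required by the definition. First I would split $B = B_1 \cup B_2$ where $B_1 := \{v \in V : v =^a s_Q \land v <^b s_Q\}$ and $B_2 := \{v \in V : v =^a t_Q \land v >^b t_Q\}$. Since $P$ is $a$-colored, it is strictly monotone in its $a$-coordinate (noted directly after \cref{def:colors}), so all vertices of $P$ have pairwise distinct $a$-coordinates. Consequently $P$ contains at most one vertex with $a$-coordinate $\pos{s_Q}^a$ and at most one with $a$-coordinate $\pos{t_Q}^a$, so $|P \cap B_1| \leq 1$ and $|P \cap B_2| \leq 1$.

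If $\pos{s_Q}^a = \pos{t_Q}^a$, then $B_1$ and $B_2$ lie on a common vertical line in the $(a,b)$-plane, and by the same monotonicity $P$ meets this line in at most one vertex, finishing the argument. The main work is therefore the remaining case $\pos{s_Q}^a \neq \pos{t_Q}^a$, which I would handle by contradiction: assume $v_1 \in P \cap B_1$ and $v_2 \in P \cap B_2$.

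Since $P$ is an $a$-colored shortest path, the subpath $P[v_1, v_2]$ is itself an $a$-colored shortest path, so $\dist(v_1, v_2) = |\pos{v_1}^a - \pos{v_2}^a| = |\pos{s_Q}^a - \pos{t_Q}^a|$. The $b$-coloring of $Q$ together with $s_Q <^b t_Q$ gives $\pos{t_Q}^b - \pos{s_Q}^b = \infnorm{\pos{s_Q} - \pos{t_Q}} \geq |\pos{s_Q}^a - \pos{t_Q}^a|$. On the other hand, the definitions of $B_1$ and $B_2$ force $\pos{v_1}^b < \pos{s_Q}^b < \pos{t_Q}^b < \pos{v_2}^b$, so $|\pos{v_1}^b - \pos{v_2}^b| > \pos{t_Q}^b - \pos{s_Q}^b$. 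Chaining these with \cref{thm:basic_inequality} yields
\[
|\pos{v_1}^a - \pos{v_2}^a| = \dist(v_1, v_2) \geq \infnorm{\pos{v_1} - \pos{v_2}} \geq |\pos{v_1}^b - \pos{v_2}^b| > |\pos{s_Q}^a - \pos{t_Q}^a| = |\pos{v_1}^a - \pos{v_2}^a|,
\]
a contradiction.

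The only slightly delicate step is the justification that $P[v_1, v_2]$ inherits $a$-coloring; this follows because subpaths of shortest paths are themselves shortest, and an $a$-colored shortest path changes its $a$-coordinate by exactly $\pm 1$ along each edge with a consistent sign, so the edge-count of $P[v_1, v_2]$ equals $|\pos{v_1}^a - \pos{v_2}^a|$. Beyond this bookkeeping, the proof reduces to a direct application of \cref{thm:basic_inequality}, so I do not expect any significant obstacle.
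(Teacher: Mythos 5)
Your proof is correct and follows essentially the same route as the paper's: split $B$ into $B_1$ and $B_2$, use strict $a$-monotonicity of $P$ to get at most one vertex from each, and then rule out one vertex from each via the inequality chain $\abs{\pos{v_1}^b - \pos{v_2}^b} > \abs{\pos{s_Q}^b - \pos{t_Q}^b} \geq \abs{\pos{s_Q}^a - \pos{t_Q}^a} = \abs{\pos{v_1}^a - \pos{v_2}^a}$, which contradicts the pair $(v_1,v_2)$ being $a$-colored. The paper phrases this contrapositively (``$(b_1,b_2)$ is not $a$-colored, hence not both on $P$'') while you argue by contradiction, and your extra case $\pos{s_Q}^a = \pos{t_Q}^a$ is harmless but unnecessary since the same inequality chain already covers it.
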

\begin{proof}
	Since $P$ is strictly monotone in the $b$-coordinate, it can clearly contain at most one point from 
	$B_1 \defeq \set{v \in V; v=^b s_Q \land v <^c s_Q}$ and one from~${B_2 \defeq \set{v \in V; v=^b t_Q \land v>^c t_Q}}$.
	It remains to show that it cannot intersect both sets.
	To this end, observe that \cref{thm:diamond_condition} states for any~$b_1 \in B_1$ and~$b_2 \in B_2$ that~$\abs{b_1^c - b_2^c} > \abs{s_Q^b - t_Q^b} \geq \abs{s_Q^b - t_Q^b} = \abs{b_1^b - b_2^b}$, and
	therefore the pair~$(b_1, b_2)$ is not $b$-colored and thus~$P$ cannot contain both~$b_1$ and~$b_2$.
\end{proof}

We next show that if~$\Delta^{b,c}(P,Q) \neq \emptyset$ and~$P$ and~$Q$ are~$\{b,c\}$-non-crossing, then at least one of the vertices~$\delta^{b,c}_P(Q)$ or $\delta^{b,c}_Q(P)$ exists.
Afterwards we will show that these vertices guarantee that the respective new areas are disjoint.

\begin{lemma}\label{lem:delta_exists}
	Let~$P$ and $Q$ be two~$\{b,c\}$-non-crossing paths with~$\Delta^{b,c}(P,Q) \neq \emptyset$.
	
	Then, ${\delta^{b,c}_P(Q) \neq \bot}$ or $\delta^{b,c}_Q(P) \neq \bot$.
\end{lemma}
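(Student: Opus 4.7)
The plan is to argue by contradiction, assuming $\delta^{a,b}_P(Q) = \bot = \delta^{a,b}_Q(P)$. First I would apply the standard normalizations: $P$ is $a$-colored with $s_P <^a t_P$ and $Q$ is $b$-colored with $s_Q <^b t_Q$. Then I invoke \cref{obs:non-empty-common-area-contains-a-t} to obtain an endpoint whose $(a,b)$-projection lies in $\Delta^{a,b}(P,Q)$ and which belongs to exactly one of the two paths. Exchanging the roles of $(P,a)$ and $(Q,b)$ if necessary, we may assume this endpoint belongs to $P$; reversing $P$ if needed, we may further assume it is $\pos{s_P}^{a,b}$. Thus, writing $D := (\patharea{\pos{s_Q}}{\pos{t_Q}})^{a,b}$, we have $\pos{s_P}^{a,b} \in D$, while neither $\pos{s_Q}^{a,b}$ nor $\pos{t_Q}^{a,b}$ lies in $(\patharea{\pos{s_P}}{\pos{t_P}})^{a,b}$.

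Next I would trace $\zeta(P)^{a,b}$ inside $D$. By \cref{thm:diamond_inclusion}, $\zeta(Q)^{a,b}$ is contained in $D$ and, since $Q$ is $b$-colored, strictly $b$-monotonic; hence it splits $D$ into two open connected regions, informally a "left half" and a "right half". The $a,b$-noncrossing hypothesis forces $\zeta(P)^{a,b}$ to stay on one side of $\zeta(Q)^{a,b}$ while inside $D$; WLOG the right half. For each integer $a$-coordinate $\alpha^\ast \in \{\pos{s_Q}^a, \pos{t_Q}^a\}$ that lies in the $a$-range $[\pos{s_P}^a, \pos{t_P}^a]$, the strict $a$-monotonicity of $P$ yields a unique vertex $v$ of $P$ with $\pos{v}^a = \alpha^\ast$. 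The assumption $\delta^{a,b}_P(Q)=\bot$ forbids $v \in B$, which translates into $\pos{v}^b \geq \pos{s_Q}^b$ when $\alpha^\ast = \pos{s_Q}^a$, and $\pos{v}^b \leq \pos{t_Q}^b$ when $\alpha^\ast = \pos{t_Q}^a$. Combining these inequalities with \cref{thm:diamond_condition} and the noncrossing constraint, I would show that $\zeta(P)^{a,b}$ cannot exit $D$ through the two sides of $\partial D$ incident to $\pos{s_Q}^{a,b}$ or $\pos{t_Q}^{a,b}$ on the right half.

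Symmetrically, I would then use $\delta^{a,b}_Q(P) = \bot$ together with the fact that $\pos{s_Q}^{a,b}$ and $\pos{t_Q}^{a,b}$ lie outside $P$'s rectangle to preclude $\zeta(P)^{a,b}$ from exiting $D$ through the two remaining sides (those incident to the left or right tips $L$ and $R$ of $D$). Once all four sides are ruled out, $\zeta(P)^{a,b}$ is trapped in $D$, so $\pos{t_P}^{a,b} \in D$, whence $\pos{t_P}^{a,b} \in \Delta^{a,b}(P,Q)$ as well. This contradicts the "exactly one endpoint" conclusion of \cref{obs:non-empty-common-area-contains-a-t} (which was already witnessed by $\pos{s_P}^{a,b}$).

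The main obstacle will be the four-sided case analysis. While the shape of each argument is clear, checking that each potential exit of $\zeta(P)^{a,b}$ through $\partial D$ really is ruled out by the combination of the $a,b$-noncrossing hypothesis, the two $\delta = \bot$ assumptions, and the bounds provided by \cref{thm:diamond_condition} requires a careful accounting of where the vertices of $P$ at $a$-coordinates $\pos{s_Q}^a$ and $\pos{t_Q}^a$ sit relative to the sides of $D$, and a symmetric accounting for the vertices of $Q$ at $b$-coordinates $\pos{s_P}^b$ and $\pos{t_P}^b$.
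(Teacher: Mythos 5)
Your route is genuinely different from the paper's, which never touches \cref{obs:non-empty-common-area-contains-a-t}: it extends $\zeta(P)^{a,b}$ by the two horizontal rays at the $b$-levels of $s_P$ and $t_P$ into a curve separating the plane, uses $\delta^{a,b}_Q(P)=\bot$ together with noncrossing to place $Q$ entirely on one side of that curve, uses $\delta^{a,b}_P(Q)=\bot$ to force $\pos{t_Q}^a \notin [\pos{s_P}^a,\pos{t_P}^a]$ (wlog $t_Q >^a t_P$ and $t_Q <^b t_P$), and then shows $\Delta^{a,b}(P,Q)=\emptyset$ by a one-line chain of inequalities from \cref{thm:diamond_condition}. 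Your plan has a gap already at its final step: \cref{obs:non-empty-common-area-contains-a-t} asserts that $\Delta^{a,b}(P,Q)$ contains an end vertex of \emph{exactly one of the paths}, not exactly one end vertex. If $\zeta(P)^{a,b}$ really were trapped inside $D$, the conclusion would be that both $\pos{s_P}^{a,b}$ and $\pos{t_P}^{a,b}$ lie in $\Delta^{a,b}(P,Q)$ while no endpoint of $Q$ does --- a situation fully consistent with the observation. So even a complete trapping argument yields no contradiction as written.

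The trapping argument itself also cannot be carried out with the tools you name. Pass to the rotated coordinates $u = z^a - z^b$, $w = z^a + z^b$, in which $D$ is an axis-parallel box $[u_{\min},u_{\max}]\times[w_{\min},w_{\max}]$ and an $a$-colored path, traversed in the direction of increasing $a$, is non-decreasing in both $u$ and $w$ (each edge changes $a$ by $1$ and $b$ by at most $1$). The two rays forming the set $B$ of \cref{def:delta} lie in the open quadrants $\{u > u_{\max},\ w < w_{\min}\}$ and $\{u < u_{\min},\ w > w_{\max}\}$, i.e.\ diagonally beyond the corners $\pos{s_Q}^{a,b}$ and $\pos{t_Q}^{a,b}$ of $D$; neither quadrant is reachable from a starting point inside $D$ by a curve monotone in $u$ and $w$. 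Hence, once you normalize so that an endpoint of $P$ projects into $D$, the hypothesis $\delta^{a,b}_P(Q)=\bot$ holds \emph{automatically} (this is just the contrapositive of \cref{lem:delta_disjointness}); it carries no information and cannot rule out any exit of $\zeta(P)^{a,b}$ from $D$. The whole burden then falls on $\delta^{a,b}_Q(P)=\bot$, which constrains the vertices of $Q$ rather than the exits of $P$; turning it into a restriction on where $\zeta(P)^{a,b}$ may leave $D$ is precisely the separation argument the paper performs, and it is absent from your sketch. (A smaller issue: your split of the four sides of $D$ into ``those incident to $s_Q$ or $t_Q$'' versus ``those incident to $L$ or $R$'' is not a partition, since every side of $D$ is incident to one corner of each kind, and only two of the sides bound the half of $D$ you restrict to.)
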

\begin{proof}
	Note that the definition of~$\Delta^{b,c}(P,Q)$ requires that \wilog{}~$P$ is~$b$-colored and~$Q$ is~$c$-colored.
	We further assume \wilog{} that~$\{b,c\} = [2]$ and that~$s_P <^b t_P$ and $s_Q <^c t_Q$.
	
	Suppose towards a contradiction that~$\delta^{b,c}_P(Q) = \delta^{b,c}_Q(P) = \bot$.
	Since $P$ and $Q$ are~$\{b,c\}$-non-crossing and $\delta^{b,c}_Q(P) = \bot$, the path $Q$ cannot cross the curve
	\[\set{x \in \RR^2; x^c = s_P \land x^b < s_P} \cup \zeta(P)^{b,c} \cup \set{x \in \RR^2; x^c = t_P \land x^b > t_P}.\]
	Since~$Q$ cannot cross the line, it is located completely on one side of it.
	Assume \wilog{} that $Q$ (and thus in particular $t_Q$) is located on the side containing $(0, 0)$, that is, for all~$v$ in~$P$ and~$w$ in~$Q$ with~$v^b = w^b$ it holds that~$v^c > w^c$.

	Then there are three possible cases: $t_Q^b < s_P^b$,~$t_Q^b \in [s_P^b,t_P^b]$, or~$t_Q^b > t_P^b$.
	Note that in the first case by \cref{thm:diamond_condition} it holds for any~$z \in \Delta^{b,c}(P,Q)$ that
	\[z^c - t_Q^c \geq z^b - t_Q^b > z^b - s_P^b \geq z^c - s_P^c,\]
	a contradiction to~$z \in (\patharea{\pos{s_Q}}{\pos{t_Q}})^{b,c}$.
	In the last case, a similar argument holds with
	\[s_P^b < t_Q^b - z^b \leq t_Q^c - z^c < s_P^c - z^c,\]
	which is a contradiction to~$z \in (\patharea{\pos{s_P}}{\pos{t_P}})^{b,c}$.
	It remains to analyze the case where~${t_Q^b \in [s_P^b,t_P^b]}$.
	Note that in this case there is a vertex~$p$ in~$P$ with~$p^b = t_Q^b$.
	Note that~$t_Q^c < p^c$ and therefore~${p \in \set{v \in V; v=^b t_Q \land v>^c t_Q}}$.
	Thus,~$\delta^{b,c}_P(Q) = p \neq \bot$, a contradiction.
\end{proof}

The next lemma shows that if $P$ and $Q$ are~$\{b,c\}$-non-crossing, but they have a common area $\Delta_{b,c}$ and~$\delta_P^{b,c}(Q) \neq \bot$, then the area between~$s_Q$ and~$t_Q$ is disjoint from the two areas between~$s_P$ and~$\delta_P^{b,c}(Q)$ and between~$\delta_P^{b,c}(Q)$ and~$t_P$.
Hence~$\delta_P^{b,c}(Q)$ is the last type of marble needed.

\begin{lemma}\label{lem:delta_disjointness}
	Let~$P$ be a $b$-colored path
	and let~$Q$ a $c$-colored path such that~$\Delta_{b,c}(P,Q) \neq \bot$ and~$\delta_P^{b,c}(Q) \neq \bot$.
	Then,
	\[\left(\patharea{\pos{s_Q}}{\pos{t_Q}}\right)^{b,c} \text{ is disjoint from } \left(\patharea{\pos{s_P}}{\pos{\delta_P^{b,c}(Q)}}\right)^{b,c} \cup \left(\patharea{\pos{\delta_P^{b,c}(Q)}}{\pos{t_P}}\right)^{b,c}.\]
\end{lemma}

\begin{proof}
	For the sake of readability, we use~$\delta \defeq \delta_P^{b,c}(Q)$.
	We will show that
	\[\left(\patharea{\pos{s_Q}}{\pos{t_Q}}\right)^{b,c} \cap \left(\patharea{\pos{s_P}}{\pos{\delta}}\right)^{b,c} = \emptyset.\]
	The proof for~$(\patharea{\pos{\delta}}{\pos{t_P}})^{b,c}$ is then completely analogous.
	Assume \wilog{} that~$\delta =^b t_Q$ and~$\delta >^c t_Q$.
	Notice that
	\begin{align*}
		\max\{\pos{s_P}^b - \pos{s_P}^c, \pos{\delta}^b - \pos{\delta}^c\} 
			\overset{\text{Obs.~\ref{obs:partial-a-b-to-s_a-relation}}}{=}& \pos{\delta}^b - \pos{\delta}^c \\
			<  \pos{t_Q}^b - \pos{t_Q}^c
			\overset{\text{Obs.~\ref{obs:partial-a-b-to-s_a-relation}}}{=}& \min\{\pos{s_Q}^b - \pos{s_Q}^c, \pos{t_Q}^b - \pos{t_Q}^c\}.
	\end{align*}
	Setting~$x \defeq \pos{s_P}^{b,c}$, $y \defeq \pos{\delta}^{b,c}$, $\hat{x} \defeq \pos{s_Q}^{b,c}$, and~$\hat{y} \defeq \pos{t_Q}^{b,c}$ in \cref{lem:patharea-coord-intersect-iff} then yields that condition~\ref{rectangle-intersect-ii} is violated and thus~$(\patharea{\pos{s_P}}{\pos{\delta}})^{b,c} \cap (\patharea{\pos{s_Q}}{\pos{t_Q}})^{b,c} = \emptyset$.
\end{proof}

\subsection{Optimized Algorithm for Finding 2 Disjoint Shortest Paths}

With the basics on crossing and noncrossing paths, we now have all ingredients for the proof of \cref{prop:2dsp}.

\dsptwoThm*
\begin{proof}%
		Let $I :=\left(G=(V,E),k,((s_1,t_1),(s_2,t_2)\right)$ be an instance of \DSP{2}.
		Compute $\pos{v}$ for all~${v \in V}$ via two breadth-first searches in $O(n+m)$ time.
		We assume without loss of generality that $G$ is connected.
		To ensure that we report $I$ being a \yes-instance only if $I$ is indeed a \yes-instance, 
		we perform a sanity-check in the very end to verify that our guesses were correct.
		Note that such a sanity-check can be done in $O(m)$ time.
		Hence, we only need to show that we find in $O(nm)$ time a solution to $I$ if there is one.
		To this end, assume there are disjoint shortest~$s_i$-$t_i$-paths $P_i$ for $i \in [2]$. 
		By \cref{thm:crossing_unique}, we have three cases for~$\zeta(P_1) \cap \zeta(P_2)$:
		It is either empty, contains only points with non-integer coordinates, or it contains at least one point with only integer coordinates.

		(Case 1): $\zeta(P_1) \cap \zeta(P_2)$ is empty.
 		If $\Delta^{1,2}(P_1, P_2) = \emptyset$, then, by \cref{thm:diamond_inclusion}, a solution can easily be found by two independent breadth-first-searches.
		Otherwise, we guess in $O(n)$ time the vertex $\delta^{1,2}_{P_1}(P_2)$ on $P_1$ or~$\delta^{2,1}_{P_2}(P_1)$ on $P_2$ from \cref{def:delta}.
		By \cref{lem:delta_exists}, at least one of them exists (assume without loss of generality that~$\delta^{1,2}_{P_1}(P_2)$ exists). 
		By \cref{lem:delta_disjointness}, $\delta^{1,2}_{P_1}(P_2) \neq \bot$ and 
		any shortest~$s_1$-$\delta^{1,2}_{P_1}(P_2)$-path ($\delta^{1,2}_{P_1}(P_2)$-$t_1$-path) is vertex disjoint from 
		any shortest~$s_2$-$t_2$-path.
		Hence, we now can check in $O(m)$ time whether $I$ is a \yes-instance.

		(Case 2): $\zeta(P_1) \cap \zeta(P_2)$ has no point with integer coordinates.
		Then, we guess the four points surrounding $\zeta(P_1) \cap \zeta(P_2)$.
		This can be done in~$O(n)$ time by guessing the vertex~$\partial^{1,2}_{P_1}(P_2)$ and by guessing whether~$\varpi^{1,2}_{P_1}(P_2)$ has a higher or lower~$2$-coordinate than~$\partial^{1,2}_{P_1}(P_2)$.
		Let~$p_{s_i}$ and~$p_{t_i}$ be the guessed points used by~$P_i$ 
		such that~$p_{s_i}^i + 1 = p_{t_i}^i$, for~$i \in [2]$.
		Now we construct a directed graph $D$ on the vertices~$V$
		such that there is an arc~$(v,w)$ if 
		$\{v,w\} \in E$, $\pos{v}^i +1 = \pos{w}^i$,
		and~$\pos{\{v,w\}} \subseteq \patharea{\pos{s_i}}{p_{s_i}} \cup \patharea{p_{t_i}}{\pos{t_i}}$ for some $i \in [2]$.
		Note that $D$ is acyclic and that each $s_i$-$t_i$-path in $D$ corresponds (same set of vertices) to a shortest $s_i$-$t_i$-path in $G$, and
		has an arc $(v,w)$ such that $\pos{v} = p_{s_i}$ and  $\pos{w} = p_{t_i}$.
		Hence, by \cref{lem:four-diamonds} we can simply use two breadth-first-searches from~$s_1$ and~$s_2$ to find a solution.

		(Case 3): $\zeta(P_1) \cap \zeta(P_2)$ has at least one point with integer coordinates.
		Without loss of generality, 
		we assume that $\pos{\alpha_{P_1}^{1,2}(P_2)} = \pos{\alpha_{P_2}^{1,2}(P_1)}$, 
		otherwise we swap one terminal pair in the input instance.
		We guess in $O(n)$ time the discrete point~$p \in \zeta(P_1)^{1,2} \cap \zeta(P_2)^{1,2}$ 
		such that~$\pos{p}^1$ is minimized.
		Let $A_i := \patharea{\pos{s_i}}{\pos{p}}$ and 
		$B_i := \patharea{\pos{p}}{\pos{t_i}}$, for all $i \in [2]$.
		Now we construct a directed acyclic graph $D$ on the vertices $V$
		such that there is an arc $(v,w)$ if for some $i\in[2]$
		we have
		(1)~$\{v,w\} \in E$, 
		(2)~$\pos{v}^i +1 = \pos{w}^i$,
		and 
		(3)~(a) $\pos{v} \in A_i \setminus B_i $ and
		$\pos{w} \in A_i$ or (b)
		$\pos{v} \in B_i$ and~$\pos{w} \in B_i \setminus A_i$.

		To add the edges with coordinates in $A_1 \cap B_2,A_2 \cap B_1$ to $D$, 
		we observe that our assumption implies 
		that $\zeta(P_1) \cap \zeta(P_2) \subseteq B_1 \cap B_2$.
		Hence, all edges where the vertices have coordinates in~$A_i \cap B_j$ can only be used by either $P_1$ or $P_2$, for each $\{i,j\} = [2]$.
		Thus, we branch in four cases and add the edges accordingly.
		Note that~$D$ is acyclic and that each~$s_i$-$t_i$-path in~$D$ corresponds to a shortest $s_i$-$t_i$-path in $G$ going through 
		point $p$.
		Hence, by \cref{lem:four-diamonds} $I$ is a \yes-instance if and only if
		there are disjoint $s_i$-$t_i$-path in~$D$, for all $i \in [2]$.
		Thus, we apply an~$O(n+m)$-time algorithm of \citet{Tho12} for \textsc{2-Disjoint Paths} on a DAG.
		This yields a total running time of~$O(nm)$.
\end{proof}

\section{The Geometry of Many Shortest Paths} \label{sec:towers}

In the previous section, we looked at two shortest paths~$P$ and~$Q$ from~$s_P$ to~$t_P$ and~$s_Q$ and~$t_Q$, respectively.
We showed that we could split~$P$ and~$Q$ in at most three parts each such that these parts are either geometrically separated or can be handled together through a reduction to DAGs.
The geometric separation ensured that we could find subpaths that are guaranteed to not intersect with the other subpaths.
In this section, we generalize this approach to more than two paths.
To this end, we start with a formalization of the splitting of the paths and then transfer the separation argument to this notion.

\subsection{From Geometry To Shortest Paths} 

We plan to refine the search for shortest paths by guessing a few vertices on the shortest paths. 
Formally, this is captured in the following definition of \mpath{s}.

\begin{definition}
	\label{def:mpaths}
	An \emph{$i$-\mpath}~$T$ is a set of vertices such that for each~$u,v \in T$ the pair~$(u,v)$ is $i$-colored.
	The unique vertices with minimal resp.\ maximal $i$-coordinate in~$T$ are $\start(T)$ resp.\ $\en(T)$.
	A path $P$ \emph{follows}~$T$ if~$T \subseteq P$.
	An $i$-\mpath{}~$S$ is called a \emph{segment} of $T$ if~$S \subseteq T$ and $S$ contains all vertices $v \in T$ with $\start(S) <^i v <^i \en(S)$.
\end{definition}

Our algorithmic approach is as follows: 
For each vertex pair~$(s_i,t_i)_{i\in[k]}$ we maintain a \mpath{}, starting with~$\{s_i,t_i\}$.
Then, more and more vertices will be guessed (added to the \mpath{}) with the goal to find shortest paths for the segments and to combine them.
In order to generalize the geometric arguments that allowed us in \cref{sec:2D-geometry} to freely combine subpaths, we adjust our definition of intersection a bit.
More precisely, we still look at the area spanned by the endpoints of minimal segments in a \mpath{}.
However, two ``consecutive'' segments clearly overlap in their end respectively start point.
To accommodate this, we use the following notion which allows the rectangles to overlap in endpoints and in non-integer coordinates (as the later do not correspond to coordinates of vertices).

\begin{definition}[$I$-avoiding]
	\label{def:avoiding}
	Let $\emptyset \subset I \subseteq [k]$.
	Two vertex pairs~$(s_p,t_p)$ and~$(s_q,t_q)$ are \emph{$I$-avoiding} if
	\[\NN^k \cap (\patharea{\pos{s_p}^I}{\pos{t_p}^I}) \cap (\patharea{\pos{s_q}^I}{\pos{t_q}^I}) \subseteq \{\pos{s_p}^I,\pos{t_p}^I\} \cap \{\pos{s_q}^I, \pos{t_q}^I\}\]
	and they are \emph{avoiding} if they are~$[k]$-avoiding.	
\end{definition}

Note that if two vertex pairs~$(s_p,t_p)$ and~$(s_q,t_q)$ are $I$-avoiding for some~$I \subset [k]$, then they are also~$I'$-avoiding for any~$I' \supset I$.
Thus, if they are~$I$-avoiding for some~$I \subseteq [k]$, then they are avoiding.

As in \cref{sec:2D-geometry}, the above geometric notion for separating paths ensures vertex-disjointness of the shortest paths.
However, it only ensures that paths are internally vertex disjoint, that is, we still allow paths to intersect but they may only intersect in vertices that are endpoints of both paths.

\begin{observation}\label{obs:avoiding-implies-disjoint}
	Let $P$ be an $s_P$-$t_P$-path and $Q$ be an $s_Q$-$t_Q$-path with~$(s_P,t_P)$ and~$(s_Q,t_Q)$ being $I$-avoiding for some~$I \subseteq [k]$.
	Then $P$~is internally vertex-disjoint from~$Q$.
\end{observation}

We transfer the notion of avoiding in a natural way to \mpath{s}.

\begin{definition}
	A \mpath{} is called \emph{minimal} if it contains exactly two vertices and it is called~\emph{$j$-colored} if $(\start(T), \en(T))$ is $j$-colored.
	Let~$I \subseteq [k]$.
	Two minimal marble paths~$T$ and~$T'$ are \emph{$I$-avoiding} if $(\start(T), \en(T))$ and $(\start(T'), \en(T'))$ are $I$-avoiding.
	Two \mpath{s} are \emph{$I$-avoiding} if all their minimal segments are pairwise $I$-avoiding.
\end{definition}

An elementary but important observation is that ``refining'' a \mpath{}, i.e., adding intermediate vertices, will not destroy the avoiding property.
Intuitively this is clear as adding further vertices shrinks the area in which the \mpath{} can lie.

\begin{lemma}
	\label{thm:supersegments-avoiding}
	Let~$S$ and~$S'$~be $i$-\mpath{s} and let $T$ and~$T'$~be $j$-\mpath{s} with
	\begin{align*}
	S' &\supseteq S, & T' &\supseteq T,\\
	\start(S') &= \start(S), & \start(T') &= \start(T),\\
	\en(S') &= \en(S),\text{ and } &\en(T') &= \en(T).
	\end{align*}
	If $S$ and~$T$ are $I$-avoiding, then so are $S'$ and~$U'$.
\end{lemma}

\begin{proof}
	Let~$\bar{S'}$ be a minimal segment of~$S'$ and~$\bar{S}$ be the minimal segment of~$S$ that contains $\bar{S'}$, that is, $\start(\bar{S}) \le^i \start(\bar{S'})$ and~$\en(\bar{S}) \ge^i \en(\bar{S'})$.
	Let~$\bar{T'}$ and~$\bar{T}$ be analogously defined for~$T$ and~$T'$.
	We need to show that $\bar{S'}$ and~$\bar{T'}$ are avoiding.
	
	As $S'$ is $i$-colored and $T'$ is $j$-colored, it follows from the choice of~$\bar{S}$ and~$\bar{T}$ that
	\begin{align*}
		\patharea{\pos{\start(\bar{S})}^I}{\pos{\en(\bar{S})}^I} & \supseteq \patharea{\pos{\start(\bar{S'})}^I}{\pos{\en(\bar{S'})}^I} \\
		\patharea{\pos{\start(\bar{T})}^I}{\pos{\en(\bar{T})}^I} & \supseteq \patharea{\pos{\start(\bar{T'})}^I}{\pos{\en(\bar{T'})}^I}.
	\end{align*}
	Thus, $\bar{S'}$ and~$\bar{T'}$ are avoiding as~$S$ and~$T$ are.
\end{proof}

We now describe which vertices we add to a \mpath{} in the case of~$k=2$.
To keep the arguments as simple as possible, we avoid optimizations as in \cref{prop:2dsp}.
The reason is that the case~$k \ge 3$ uses the following definition and lemmas as base case in inductive arguments.

We start with defining the set of marbles for a pair of paths.

\begin{definition}\label{def:2d-crossing-set}
	Let $P$ be a $b$-colored $s_P$-$t_P$-path and $Q$ be a $c$-colored $s_Q$-$t_Q$-path.
	The set of~\emph{$\{b,c\}$-marbles} of~$P$ with respect to~$Q$ is
	\[ \cross_P^{b,c}(Q) \defeq \set{s_P, t_P, \mu^{b,c}_P(Q) ; \mu \in \{\alpha, \omega, \partial, \varpi, \delta\} } \setminus \{\bot\} . \]
\end{definition}

The next two lemmas state that the \mpath{s} defined in \cref{def:2d-crossing-set} are generally avoiding, except for the segments located between the respective~$\alpha$- and $\omega$-vertices.
This is essentially a translation of our results in \cref{ssec:marbles-2D} to \mpath{s}.

First we deal with the case where no such segments exist.

\begin{lemma}
	\label{lem:no-alpha-means-avoiding}
	Let~$P$ be a~$b$-colored~\mbox{$s_P$-$t_P$-path} and let~$Q$ be a~$c$-colored~\mbox{$s_Q$-$t_Q$-path}.
	
	If~${\alpha^{b,c}_P(Q) = \bot}$, then the \mpath{s}~$\cross^{b,c}_P(Q)$ and~$\cross^{b,c}_Q(P)$ are avoiding.
\end{lemma}

\begin{proof}
	We consider the two cases whether or not~$P$ and~$Q$ are~$\{b,c\}$-crossing.
	If~$P$ and~$Q$ are~$\{b,c\}$-crossing, then, since $\alpha^{b,c}_P(Q) = \bot$, it follows that (see \cref{fig:areas} case (i))
	\[P = P[s_P,\partial^{b,c}_P(Q)] \bullet P[\varpi^{b,c}_P(Q),t_P] \text{ and } Q = Q[s_Q,\partial^{b,c}_Q(P)] \bullet Q[\varpi^{b,c}_Q(P),t_Q].\]
	Since $\alpha^{b,c}_P(Q) = \bot$, the segments~$\{\partial^{b,c}_P(Q),\varpi^{b,c}_P(Q)\}$ and~$\{\partial^{b,c}_Q(P),\varpi^{b,c}_Q(P)\}$ are avoiding.
	Moreover, by \cref{lem:four-diamonds}, the sets
	\[\left(\patharea{\pos{s_P}}{\pos{\partial^{b,c}_P(Q)}}\right)^{b,c}\hspace*{-.1cm}, \left(\patharea{\pos{\varpi^{b,c}_P(Q)}}{\pos{t_P}}\right)^{b,c}\hspace*{-.1cm}, \left(\patharea{\pos{s_Q}}{\pos{\partial^{b,c}_Q(P)}}\right)^{b,c}\hspace*{-.1cm}, \text{\;and } \left(\patharea{\pos{\varpi^{b,c}_Q(P)}}{\pos{t_Q}}\right)^{b,c} \]
	are pairwise disjoint (or undefined).
	Thus, the three minimal segments of~$\cross^{b,c}_P(Q)$ avoid each of the three minimal segments of~$\cross^{b,c}_Q(P)$.
	
	If~$P$ and~$Q$ are $\{b,c\}$-non-crossing, then consider~$\Delta^{b,c}(P,Q) = (\patharea{\pos{s_P}}{\pos{t_P}})^{b,c} \cap (\patharea{\pos{s_Q}}{\pos{t_Q}})^{b,c}$ (see \cref{def:common-area}).
	We consider the two cases~$\Delta^{b,c}(P, Q) = \emptyset$ and~$\Delta^{b,c}(P, Q) \neq \emptyset$.
	In the former case it holds that~$\cross^{b,c}_P(Q) = \{s_P,t_P\}$ and~$\cross^{b,c}_Q(P) = \{s_Q, t_Q\}$ are avoiding.
	In the latter case, there is a~$\delta_P^{b,c}(Q) \neq \bot$ or~$\delta_Q^{b,c}(P) \neq \bot$ by \cref{lem:delta_exists}.
	Without loss of generality, assume that~$\delta_P^{b,c}(Q) \neq \bot$.
	Then,~${\delta_P^{b,c}(Q) \in \cross^{b,c}_P(Q) \subseteq P'}$ and by \cref{lem:delta_disjointness} we have that~$\cross^{b,c}_P(Q)$ and~$\cross^{b,c}_Q(P)$ are avoiding.
\end{proof}

The next lemma deals with the case where~$\alpha^{b,c}_P(Q) \neq \bot$ (i.\,e.\ case (ii) of \cref{fig:areas}).

\begin{lemma}\label{lem:outside-means-avoiding}
	Let~$P$ be a~$b$-colored~\mbox{$s_P$-$t_P$-path}, let~$Q$ be a~$c$-colored~\mbox{$s_Q$-$t_Q$-path}, let~${\alpha^{b,c}_P(Q) \neq \bot}$,
	and let~$S_1$ and~$S_2$ be the segments of~$\cross^{b,c}_P(Q)$ with
	\begin{align*}
	\start(S_1) = s_P,\ \en(S_1) = \alpha_P^{b,c}(Q),\ \start(S_2) = \omega_P^{b,c}(Q),\text{ and }\en(S_2)=t_P.
	\end{align*}
	Then,~$S_1$ and~$\cross^{b,c}_Q(P)$ are avoiding, and so are~$S_2$ and~$\cross^{b,c}_Q(P)$.
\end{lemma}
\begin{proof}
	By \cref{lem:four-diamonds}, the sets
	\[\left(\patharea{\pos{s_P}}{\pos{\partial^{b,c}_P(Q)}}\right)^{b,c}\hspace*{-.1cm}, \left(\patharea{\pos{\varpi^{b,c}_P(Q)}}{\pos{t_P}}\right)^{b,c}\hspace*{-.1cm}, \left(\patharea{\pos{s_Q}}{\pos{\partial^{b,c}_Q(P)}}\right)^{b,c}\hspace*{-.1cm}, \text{\;and } \left(\patharea{\pos{\varpi^{b,c}_Q(P)}}{\pos{t_Q}}\right)^{b,c} \]
	are pairwise disjoint (or undefined).
	Moreover, by \cref{thm:crossing_colored}, it follows that
	\[\NN^k \cap (\patharea{\pos{\alpha_P^{b,c}(Q)}}{\pos{\omega_P^{b,c}(Q)}}) =^{b,c} (\patharea{\pos{\alpha_Q^{b,c}(P)}}{\pos{\omega_Q^{b,c}(P)}}) \cap \NN^k.\]
	As~$P$ is $b$-colored, it follows that~$\left(\patharea{\pos{s_P}}{\pos{\partial^{b,c}_P(Q)}}\right)^{b,c}$ and~$\left(\patharea{\pos{\varpi^{b,c}_P(Q)}}{\pos{t_P}}\right)^{b,c}$ are disjoint from any segment in~$\cross^{b,c}_Q(P)$.
	
	It remains to show that the two minimal segments~$\{\partial^{b,c}_P(Q),\alpha^{b,c}_P(Q)\}$ and~$\{\omega^{b,c}_P(Q),\varpi^{b,c}_P(Q)\}$ avoid~$\cross^{b,c}_Q(P)$.
	To this end, observe that~$\alpha_Q^{b,c}(P),\omega_Q^{b,c}(P) \in \cross^{b,c}_Q(P)$. 
	Moreover, as established above, 
	\[\pos{\partial^{b,c}_P(Q)}^{b,c} \notin \left(\patharea{\pos{s_Q}}{\pos{\partial^{b,c}_Q(P)}}\right)^{b,c} \cup \left(\patharea{\pos{\varpi^{b,c}_Q(P)}}{\pos{t_Q}}\right)^{b,c}\]
	and
	\[\pos{\partial^{b,c}_P(Q)}^{b,c} \notin \left(\patharea{\pos{\alpha_P^{b,c}(Q)}}{\pos{\omega_P^{b,c}(Q)}}\right)^{b,c} = \left(\patharea{\pos{\alpha_Q^{b,c}(P)}}{\pos{\omega_Q^{b,c}(P)}}\right)^{b,c}.\]
	Hence, $\{\partial^{b,c}_P(Q),\alpha^{b,c}_P(Q)\}$ and~$\cross^{b,c}_Q(P)$ are avoiding.
	The case for~$\{\omega^{b,c}_P(Q),\varpi^{b,c}_P(Q)\}$ follows analogously.
\end{proof}

\subsection{Crossing Set}

We now give some intuition on how to lift our ideas to arbitrary~$k$ leading to \cref{def:crossing-sets}, a generalization of \cref{def:2d-crossing-set}.

Initially, each path~$P_i$ gets label~$i$.
Intuitively, we will set a label~$j$ to a (sub)path whenever we will in future exploit that the subpath is~$j$-colored.
For example, if a subpath of~$P_i$ is shown to be geometrically separated from all other subpaths, then we do not need the information that this subpath has a color~$j \neq i$.

Whenever two paths~$P_i$ and~$P_j$ in the solution intersect in the~\mbox{$(i,j)$-projection} (that is, the respective~$\alpha$- and~$\omega$-vertices are not~$\bot$), then the subpaths~$P_i'$ and~$P_j'$ in the intersection get both labels~$i$ and~$j$. 
If a third path~$P'$ also intersects with~$P'_j$, then we try to use the intersections to move the label~$i$ via path~$P_j$ to some subpath of~$P'$.
Generalizing this, we consider for each sequence~$\sigma = (\ell_1, \ell_2, \ldots, \ell_h)$ whether label~$\ell_1$ could be ``transported'' from~$P_{\ell_1}$ to~$P_{\ell_2}$, from~$P_{\ell_2}$ to~$P_{\ell_3}$, and so on until~$P_{\ell_{h}}$.

Keeping track of these sequences allows us to bound the number of resulting subpaths:
We will show that for each such sequence~$\sigma = (\ell_1, \ell_2, \ldots, \ell_h)$ at most one subpath of~$P_{\ell_h}$ can receive label~$\ell_1$ via~$\sigma$.

In the following, we use~$\setp(\tau) \defeq \{\ell_1, \ldots, \ell_h\}$ to denote the set with all entries in a sequence~$\tau = (\ell_1, \ldots, \ell_h)$.
We next define the \emph{crossing set}~$\crossing$ recursively for each~$\Phi \subseteq [k]$.
This should be seen as the set of marbles of a solution.
We will then show a result similar to \cref{lem:no-alpha-means-avoiding,lem:outside-means-avoiding} for arbitrary~$k$ that then allows us to find the desired partition of paths.

\begin{definition}\label{def:crossing-sets}
	Let~$(G,(s_i,t_i)_{i\in [k]})$ be an instance of \kDSP{} and let~$\calP = (P_i)_{i \in [k]}$ be a solution to this instance, that is, $P_i$ is the path between~$s_i$ and~$t_i$ in the solution.
	For each~$\Phi \subseteq [k]$ and each permutation~${\sigma = (\ell_1, \ldots, \ell_{|\Phi|})}$ of~$\Phi$, we define the crossing set~$\crossing^{\sigma}$ and the endpoints~$\segmentEnds(\sigma)$ of intersections as follows.
	\begin{itemize}
		\item If $|\Phi|=1$ with $\sigma = (i)$, then let $\crossing^{\sigma} \defeq\segmentEnds(\sigma) \defeq \{s_i, t_i\}$.
		\item If $|\Phi|=2$ with $\sigma = (i,j)$, then let
		\[
			\segmentEnds(\sigma) \defeq \{\alpha^{i,j}_{P_j}(P_i), \omega^{i,j}_{P_j}(P_i)\} \text{ and }
			\crossing^{\sigma} \defeq \cross^{i,j}_{P_j}(P_i) \setminus \{\bot\}.
		\]
		\pagebreak[0]
		\item If $|\Phi| \ge 3$, then let~$\sigma_{\start} \defeq (\ell_1, \ldots, \ell_{|\Phi|-1})$ and~$\sigma_{\en} \defeq (\ell_2, \ldots, \ell_{|\Phi|})$. 
			We denote by~$Q$ the maximum common subpath of~$P_{\ell_{|\Phi|-1}}[\segmentEnds(\sigma_{\start})]$ and~$P_{\ell_{|\Phi|-1}}[\segmentEnds((\ell_{|\Phi|},\ell_{|\Phi|-1}))]$.
			If $\segmentEnds(\sigma_{\start}) = \{\bot\}$,\ \ $\segmentEnds(\sigma_{\en}) = \{\bot\}$, or~${Q = \emptyset}$,
			then let~${\segmentEnds(\sigma) \defeq \crossing^{\sigma} \defeq \{\bot\}}.$
			Otherwise, let
			\begin{align*}
				P &\defeq P_{\ell_{|\Phi|}}[\segmentEnds(\sigma_{\en})],\\
				\segmentEnds(\sigma) &\defeq \{\alpha^{\ell_1,\ell_{|\Phi|}}_{P}(Q), \omega^{\ell_1,\ell_{|\Phi|}}_{P}(Q)\}, \text{ and}\\
				\crossing^{\sigma} &\defeq (\cross^{\ell_1, \ell_{|\Phi|}}_P(Q) \cup \cross^{\ell_1, \ell_{|\Phi|}}_Q(P)) \setminus \{\bot\}.
			\end{align*}
	\end{itemize}
	The set~$\crossing \defeq \bigcup_{\sigma} \crossing^\sigma$ is the \emph{crossing set} of $\calP$.
\end{definition}

We make the following observation, which implies that the above definition of crossing sets and endpoints is valid.
\begin{observation}
		\label{thm:crossing-set-valid}
	Let~$\sigma \defeq (\ell_1, \dots, \ell_{|\Phi|})$ be any permutation of any~$\Phi \subseteq [k]$.
	If~$\segmentEnds(\sigma) \ne \{\bot\}$, then
	\begin{enumerate}[(i)]
		\item $\segmentEnds(\sigma) \subseteq P_{\ell_{|\Phi|}}$, and
		\item $\segmentEnds(\sigma)$ is $c$-colored for each~$c \in \Phi$.
	\end{enumerate}
\end{observation}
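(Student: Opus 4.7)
The plan is to prove both parts of the observation simultaneously by induction on $h := \abs{\Phi}$. The base cases are handled directly: for $h = 1$ with $\sigma = (i)$, the set $\segmentEnds(\sigma) = \{s_i, t_i\}$ lies on $P_i = P_{\ell_h}$, and the pair $(s_i, t_i)$ is $i$-colored because $\abs{\pos{s_i}^i - \pos{t_i}^i} = \dist(s_i, t_i)$, which is forced by \cref{thm:basic_inequality}. For $h = 2$ with $\sigma = (i, j)$, the points $\alpha^{i,j}_{P_j}(P_i)$ and $\omega^{i,j}_{P_j}(P_i)$ are by their definition vertices of $P_j = P_{\ell_h}$, and the $(i,j)$-coloring of the pair follows directly from \cref{thm:crossing_colored}.

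For the inductive step $h \geq 3$, I may assume $\segmentEnds(\sigma) \neq \{\bot\}$ (otherwise the claim is vacuous). Applying the induction hypothesis to $\sigma_{end} = (\ell_2, \dots, \ell_h)$ yields $\segmentEnds(\sigma_{end}) \subseteq P_{\ell_h}$, which validates the definition $P := P_{\ell_h}[\segmentEnds(\sigma_{end})]$, and also shows that the endpoint pair is $c$-colored for every $c \in \{\ell_2, \dots, \ell_h\}$. A short argument (a $c$-colored path is strictly monotone in its $c$-coordinate, so any sub-path inherits the $c$-coloring via \cref{thm:basic_inequality}) transfers this to $P$. An analogous application of the IH to $\sigma_{start} = (\ell_1, \dots, \ell_{h-1})$ and to $(\ell_h, \ell_{h-1})$ shows that the two subpaths of $P_{\ell_{h-1}}$ whose intersection defines $Q$ are colored with $\{\ell_1, \dots, \ell_{h-1}\}$ and $\{\ell_{h-1}, \ell_h\}$ respectively; hence $Q$ (as a subpath of each) is $c$-colored for every $c \in \Phi$. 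Part~(i) is then immediate from $\segmentEnds(\sigma) \subseteq P \subseteq P_{\ell_h}$.

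The crux of part~(ii) is upgrading the $\{\ell_2, \dots, \ell_h\}$-coloring of $P$ by the remaining label $\ell_1$. Since $\segmentEnds(\sigma) \neq \{\bot\}$, we have $\alpha^{\ell_1, \ell_h}_P(Q) \neq \bot$, so \cref{thm:crossing_colored} applies and tells us that the subpath of $P$ between $\alpha^{\ell_1, \ell_h}_P(Q)$ and $\omega^{\ell_1, \ell_h}_P(Q)$ is $(\ell_1, \ell_h)$-colored. Combined with the inherited $\{\ell_2, \dots, \ell_h\}$-coloring of $P$, the endpoint pair $\segmentEnds(\sigma)$ is therefore $c$-colored for every $c \in \Phi$, settling~(ii). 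The validity of \cref{def:crossing-sets} drops out of the same induction: part~(i) of the IH ensures at each level that $\segmentEnds(\sigma_{start})$ and $\segmentEnds(\sigma_{end})$ lie on the correct paths, so the subpath constructions in the recursion are well-defined. I expect the main difficulty to be precisely this coloring bookkeeping, since the label $\ell_1$ is never carried by $P$ itself and has to be ``transported'' onto $\segmentEnds(\sigma)$ through the $(\ell_1, \ell_h)$-intersection with $Q$, which is exactly the service provided by \cref{thm:crossing_colored}.
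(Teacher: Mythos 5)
Your proof is correct and follows essentially the same route as the paper, which proves both claims by induction on $h$, deriving (i) from $\segmentEnds(\sigma) \subseteq P \subseteq P_{\ell_h}$ and (ii) from \cref{thm:crossing_colored}; your write-up merely spells out the coloring bookkeeping (subpaths of $c$-colored paths stay $c$-colored via \cref{thm:basic_inequality}) that the paper leaves implicit.
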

\begin{proof}
	We prove both claims by an induction over~$|\Phi|$.
	For~$|\Phi| = 1$, note that~${\segmentEnds(\sigma) = \{s_{\ell_1}, t_{\ell_1}\}}$.
	Clearly~$\{s_{\ell_1}, t_{\ell_1}\} \subseteq P_{\ell_1}$ as these are the ends of~$P_{\ell_1}$ and the pair~$\{s_{\ell_1}, t_{\ell_1}\}$ is by definition~$\ell_1$-colored.
	
	Now assume that both claims hold for all~$\Phi'$ with~$|\Phi'| < |\Phi|$.
	Since~${\segmentEnds(\sigma) \ne \{\bot\}}$, it holds that~$\segmentEnds(\sigma) = \{\alpha^{\ell_1,\ell_{|\Phi|}}_{P}(Q), \omega^{\ell_1,\ell_{|\Phi|}}_{P}(Q)\}$, where
	\begin{align*}
		P &\coloneqq P_{\ell_{\Phi}}[\segmentEnds((\ell_2, \dots, \ell_{\abs{\Phi}}))], \\
		Q &\coloneqq P_{\ell_{|\Phi|-1}}[\segmentEnds((\ell_1,\ell_2,\ldots,\ell_{|\Phi|-1}))] \cap P_{\ell_{|\Phi|-1}}[\segmentEnds((\ell_{|\Phi|},\ell_{|\Phi|-1}))]
	\end{align*}
	if~$|\Phi| \geq 3$,
	and if~$|\Phi|=2$, then~$P \coloneqq P_{\ell_2}, Q \coloneqq P_{\ell_1}$.
	Note that~$Q \neq \emptyset$ and hence
	by induction hypothesis $Q$~is~$c$-colored for each~${c \in \Phi \setminus \{\ell_{|\Phi|}\}}$.
	Thus,
	\[{\segmentEnds(\sigma) =  \{\alpha^{\ell_1,\ell_{|\Phi|}}_{P}(Q), \omega^{\ell_1,\ell_{|\Phi|}}_{P}(Q)\}}\]
	is well-defined and hence~$\segmentEnds(\sigma) \subseteq P \subseteq P_{\ell_{|\Phi|}}$.
	Moreover, by \cref{thm:crossing_colored}, it holds that~$\segmentEnds(\sigma)$ is~\mbox{$c$-colored} for each~${c \in (\Phi \setminus \{\ell_{|\Phi|}\}) \cup \{\ell_{|\Phi|}\} = \Phi}$.
\end{proof}

We next want to show a lemma (\cref{lem:parallel-case} which states that when ``transporting'' the labels via a permutation~${\sigma = (\ell_1, \ell_2, \ldots, \ell_{|\Phi|})}$,
then the intersecting subpath in the target path~$P_{\ell_{|\Phi|}}$ ``agrees'' in all coordinates in~$\setp(\sigma)$ with a subpath of the path~$P_{\ell_{|\Phi|-1}}$ the label is transported from.
The proof uses the following observation:
For any sequence~${\sigma' = (\ell_i,\ell_{i+1},\ldots,\ell_{|\Phi|})}$ with~$i \geq 1$,
it holds that the subpath of~$P_{\ell_{|\Phi|}}$ between the two vertices in~$\segmentEnds(\sigma)$ is a subpath of the one between the two vertices in~$\segmentEnds(\sigma')$.
In other words, if we add more entries to the front of~$\sigma'$, then we get smaller and smaller paths.

\begin{observation}
		\label{obs:less-is-more}
	Let $\sigma \defeq (\ell_1, \ell_2, \dots, \ell_{|\Phi|})$ be any permutation of any $\Phi \subseteq [k]$ with~${\abs{\Phi} \geq 2}$.
	If~$\segmentEnds(\sigma) \ne \{\bot\}$, then
	$P_{\ell_{|\Phi|}}[\segmentEnds(\sigma)] \subseteq P_{\ell_{|\Phi|}}[\segmentEnds((\ell_2, \ell_3, \dots, \ell_{|\Phi|}))].$
\end{observation}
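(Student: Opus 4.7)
The plan is to read the claim directly off the recursive \cref{def:crossing-sets} via a short case distinction on $h$, using \cref{thm:crossing-set-valid} only to know that all the vertices involved really lie on $P_{\ell_h}$.

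First I would handle the base case $h = 2$, so $\sigma = (\ell_1, \ell_2)$ and $\sigma_{end} = (\ell_2)$. By the $h = 1$ clause of \cref{def:crossing-sets}, $\segmentEnds(\sigma_{end}) = \{s_{\ell_2}, t_{\ell_2}\}$, hence $P_{\ell_h}[\segmentEnds(\sigma_{end})] = P_{\ell_2}$ is the whole path. By \cref{thm:crossing-set-valid}(i) we have $\segmentEnds(\sigma) \subseteq P_{\ell_2}$, so $P_{\ell_2}[\segmentEnds(\sigma)]$ is a subpath of $P_{\ell_2}$ and the inclusion is trivial.

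For the inductive case $h \geq 3$, the key point is that the assumption $\segmentEnds(\sigma) \neq \{\bot\}$ forces the recursion to have taken its ``otherwise'' branch: if $\segmentEnds(\sigma_{end}) = \{\bot\}$ or any of the other degenerate conditions held, then \cref{def:crossing-sets} would have set $\segmentEnds(\sigma) = \{\bot\}$, contradicting the hypothesis. Thus $P := P_{\ell_h}[\segmentEnds(\sigma_{end})]$ is a well-defined subpath of $P_{\ell_h}$ and $\segmentEnds(\sigma) = \{\alpha^{\ell_1,\ell_h}_P(Q), \omega^{\ell_1,\ell_h}_P(Q)\}$. By \cref{def:alpha-beta}, both of these vertices are vertices of $P$ itself. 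Since $P$ is a subpath of $P_{\ell_h}$, the subpath of $P_{\ell_h}$ between them coincides with the subpath of $P$ between them, i.e.\ $P_{\ell_h}[\segmentEnds(\sigma)] \subseteq P = P_{\ell_h}[\segmentEnds(\sigma_{end})]$, which is exactly the claim.

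There is no real obstacle here: the statement is built directly into the recursive definition, and the only thing that requires attention is ruling out the $\{\bot\}$ branch so that the subpath $P$ really exists; this is taken care of by the assumption $\segmentEnds(\sigma) \neq \{\bot\}$ together with the ``or'' structure of the degenerate case in \cref{def:crossing-sets}.
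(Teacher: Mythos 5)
Your argument is correct and is essentially the paper's own proof, which simply notes that by \cref{def:crossing-sets} the set $\segmentEnds(\sigma)$ consists of vertices of $P = P_{\ell_h}[\segmentEnds(\sigma_{end})]$, so the subpath between them is contained in $P$. You merely spell out the base case and the observation that the hypothesis $\segmentEnds(\sigma) \neq \{\bot\}$ rules out the degenerate branch, both of which are fine.
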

\begin{proof}
	Follows directly from~\cref{def:crossing-sets} as the vertices~$\segmentEnds(\sigma)$ lie on~$P_{\ell_{\abs{\Phi}}}[\segmentEnds((\ell_2, \ell_3, \ldots, \ell_{\abs{\Phi}}))]$.
\end{proof}

With this observation, we are able to show the following lemma.

\begin{lemma}
	\label{lem:parallel-case}
	Let~$\Phi \subseteq [k]$ with~$|\Phi| \geq 2$.
	Let $\sigma \defeq (\ell_1, \ell_2, \dots, \ell_{|\Phi|})$ be any permutation of~$\Phi$.
	If~$\segmentEnds(\sigma) \ne \{\bot\}$, then
	$P_{\ell_{|\Phi|}}[\segmentEnds(\sigma)] =^{\Phi} Q'$ for some subpath $Q'$ of
	\[
		{Q \defeq P_{\ell_{|\Phi|-1}}[\segmentEnds((\ell_1, \ell_2, \dots, \ell_{|\Phi|-1}))] \cap P_{\ell_{|\Phi|-1}}[\segmentEnds((\ell_{|\Phi|}, \ell_{|\Phi|-1}))]}.
	\]
\end{lemma}

\begin{proof}
	Note that the existence of $Q' \subseteq Q$
	with $P_{\ell_{\abs{\Phi}}}[\segmentEnds(\sigma)] =^{\ell_1, \ell_{\abs{\Phi}}} Q'$
	follows directly from \cref{def:crossing-sets}
	and~\cref{thm:crossing_colored}.
	In particular, the claim holds for $\abs{\Phi}=2$.
	We will use induction over~$|\Phi|$ to also prove the claim for $\abs{\Phi} > 2$.
	Let~$\sigma_{\start} \defeq (\ell_1,\ell_2,\ldots,\ell_{|\Phi|-1})$ and~${\sigma_{\en} \defeq (\ell_2, \ell_3, \dots, \ell_{|\Phi|})}$.
	By \cref{obs:less-is-more},~$P_{\ell_{|\Phi|}}[\segmentEnds(\sigma)] \subseteq P_{\ell_{|\Phi|}}[\segmentEnds(\sigma_{\en})]$
	and hence there is by induction hypothesis a subpath
	\[
		R' \subseteq P_{\ell_{|\Phi|-1}}[\segmentEnds((\ell_2, \dots, \ell_{\abs{\Phi}-1}))] \cap P_{\ell_{|\Phi|-1}}[\segmentEnds((\ell_{|\Phi|}, \ell_{|\Phi|-1}))]
	\]
	with $P_{\ell_{|\Phi|}}[\segmentEnds(\sigma_{\en})] =^{\setp(\sigma_{\en})} R'$.
	Hence, $R' \supseteq^{\ell_{|\Phi|}} P_{\ell_{|\Phi|}}[\segmentEnds(\sigma)] =^{\ell_{|\Phi|}} Q'$ and~$R'$ and~$Q'$ are both subpaths of~$Q$.
	Since~$Q, R' \subseteq P_{\ell_{|\Phi|-1}}[\segmentEnds((\ell_{|\Phi|}, \ell_{|\Phi|-1}))]$ are by \cref{thm:crossing-set-valid} \mbox{$\ell_{|\Phi|}$-colored}, this implies~$R' \supseteq Q'$.
	From~$P_{\ell_{\abs{\Phi}}}[\segmentEnds(\sigma)] =^{\ell_1, \ell_{\abs{\Phi}}} Q'$ follows~$Q' =^{\setp(\sigma)} P_{\ell_{|\Phi|}}[\segmentEnds(\sigma)]$, which proves the claim.
\end{proof}

\subsection{Labels and Avoiding Marble Paths}

In this subsection, we finally state the central result of this section.
To this end, we first define labels for an~$i$-marble path~$T$
\[
\varlabels[T] := \{a \mid \exists \sigma = (a=\ell_1,\ldots,\ell_h=i), h\ge 1 \colon T \subseteq P_i[\segmentEnds(\sigma)]  \}
\]
In our algorithm, the marble paths $P_i \cap \crossing$ play a central role.
Their significance is due to the fact that if the labels of two minimal segments are not identical, then the two segments are avoiding.
This result will be proven in \cref{lem:correctness} at the end of this section.
Before we can formally prove it, we will first generalize \cref{lem:no-alpha-means-avoiding,lem:outside-means-avoiding} from~$\cross$ (comparison of two paths) to~$\crossing$ (sequences of paths).
Unfortunately, the following proof contains a lot of rather tedious case distinctions.

\begin{lemma}
		\label{lem:avoiding-cases}
	Let~$(G,(s_i,t_i)_{i\in [k]})$ be an instance of \kDSP,~$\calP \defeq (P_i)_{i \in [k]}$ be a solution to this instance,~$\crossing$ be the crossing set of~$\calP$,~$\Phi \subseteq [k]$ be some set,~${\sigma \defeq (\ell_1, \ell_2, \ldots, \ell_{|\Phi|})}$ be a permutation of~$\Phi$,~${\sigma_{\start} \defeq (\ell_1, \ell_2, \ldots, \ell_{|\Phi|-1})}$,~$g\defeq \ell_1, i \defeq \ell_{|\Phi|-1}$, and let~$j \defeq \ell_{|\Phi|}$.
	\begin{enumerate}[(i)]
		\item If $\segmentEnds(\sigma) = \{\bot\}$ and $\segmentEnds(\sigma_{\start}) \ne \{\bot\}$, then the two \mpath{s}
			\[ P_{i}[\segmentEnds(\sigma_{\start})] \cap \crossing \quad\text{and}\quad P_{j} \cap \crossing \]
			are avoiding. 
		\item If $\segmentEnds(\sigma) = \{u,v\}$ with~$u <^j v$, then the two \mpath{s}
			\[ P_{i}[\segmentEnds(\sigma_{\start})] \cap \crossing \quad\text{and}\quad P_{j}[s_j, u] \cap \crossing\]
			are avoiding and so are
			\[ P_{i}[\segmentEnds(\sigma_{\start})] \cap \crossing \quad\text{and}\quad P_{j}[v, t_j] \cap \crossing.\]
	\end{enumerate}
\end{lemma}
\begin{proof}
	We will prove both claims simultaneously by induction over $|\Phi|$.

	\noindent
	\emph{Base case:} Let~$|\Phi|=2$ and hence $g=i$ and~$P_i[\segmentEnds(\sigma_{\start})] = P_i$.
	\begin{enumerate}[(i)]
		\item 
			\label{claim1}
			Since~$\segmentEnds(\sigma) = \{\bot\}$, it holds that~$\alpha^{i,j}_{P_j}(P_i) = \bot$.
			By \cref{lem:no-alpha-means-avoiding}, $\cross^{i,j}_{P_i}(P_j)$ and $\cross^{i,j}_{P_j}(P_i)$ are avoiding.
			Hence, $P_{i} \cap \crossing$ and~$P_{j} \cap \crossing$ are also avoiding by \cref{thm:supersegments-avoiding}
			since
			${\cross^{i,j}_{P_i}(P_j) \subseteq P_i \cap \crossing}$ and ${\cross^{i,j}_{P_j}(P_i) \subseteq P_j \cap \crossing}$
			by \cref{def:crossing-sets}.
			
		\item
			\label{claim2}
			Since $\sigma = (i,j)$, it holds that~$u = \alpha^{i,j}_{P_j}(P_i)$ and $v = \omega^{i,j}_{P_j}(P_i)$.
			By \cref{lem:outside-means-avoiding}, $\cross^{i,j}_{P_i}(P_j)$ and $\{s_j, u\}$ are avoiding and so are~$\cross^{i,j}_{P_i}(P_j)$ and $\{v, t_j\}$.
			Thus the claim again follows from \cref{thm:supersegments-avoiding} 
			since ${\cross^{i,j}_{P_i}(P_j) \subseteq P_i \cap \crossing}$ and~${\cross^{i,j}_{P_j}(P_i) \subseteq P_j \cap \crossing}$.
	\end{enumerate}
	
	\noindent
	\emph{Induction step:} Let~$|\Phi|\geq 3$ and assume that both claims holds for all~${\Phi' \subseteq [k]}$ with~${2 \leq |\Phi'| < |\Phi|}$.
	Let $\sigma_{\en} \defeq (\ell_2, \ell_3, \ldots, \ell_{|\Phi|})$ and~$\sigma' \defeq (\ell_2, \ell_3, \ldots,\ell_{|\Phi|-1})$.
	In the following, we will frequently use \cref{thm:supersegments-avoiding} implicitly.
	\begin{enumerate}[(i)]
		\item Since~$\segmentEnds(\sigma) = \{\bot\}$ and~$\segmentEnds(\sigma_{\start}) \neq \{\bot\}$, by \cref{def:crossing-sets}, there are three possible cases:
		\begin{align*}
		\segmentEnds(\sigma_{\en}) &= \{\bot\},\\
		Q \defeq P_{i}[\segmentEnds(\sigma_{\start})] \cap P_{i}[\segmentEnds((j,i))] &= \emptyset, \text{ or }\\
		\alpha^{g,j}_{P_j[\segmentEnds(\sigma_{\en})]}(Q) &= \bot.
		\end{align*}
		We will show that~$P_{i}[\segmentEnds(\sigma_{\start})] \cap \crossing \text{ and } P_{j} \cap \crossing$ are avoiding in each of the three cases.
		\begin{enumerate}[(1)]
			\item We start with the case where~$\segmentEnds(\sigma_{\en}) = \{\bot\}$.
				Since~$\segmentEnds(\sigma_{\start}) \neq \{\bot\}$ it holds by~\cref{obs:less-is-more} that
				\[\emptyset \neq P_i[\segmentEnds(\sigma_{\start})] \subseteq P_i[\segmentEnds(\sigma')]\]
				and in particular,~$\segmentEnds(\sigma') \neq \{\bot\}$.
				Since also~$\segmentEnds(\sigma_{\en}) = \{\bot\}$, the induction hypothesis~\ref{claim1} states that
				$P_{i}[\segmentEnds(\sigma')] \cap \crossing$ and $P_{j} \cap \crossing$
				are avoiding.
				By definition, each segment of~$P_{i}[\segmentEnds(\sigma')] \cap \crossing$ is also avoiding with~$P_{j} \cap \crossing$
				and since~$P_i[\segmentEnds(\sigma_{\start})] \subseteq P_i[\segmentEnds(\sigma')]$ and~$\segmentEnds(\sigma_{\start}) \subseteq \crossing$, it holds that~$P_{i}[\segmentEnds(\sigma_{\start})]\cap \crossing$ is a segment of~$P_{i}[\segmentEnds(\sigma')] \cap \crossing$.
			\item We continue with the case where
				\[Q \defeq P_i[\segmentEnds(\sigma_{\start})] \cap P_i[\segmentEnds((j,i))] = \emptyset.\]
				First, if $\segmentEnds((j,i)) = \{\bot\}$, then the base case~\ref{claim1} gives that 
				$P_i \cap \crossing$ and $P_j \cap \crossing$ are avoiding,
				so we are done since $P_i[\segmentEnds(\sigma_{\start})] \cap \crossing$ is a segment of $P_i \cap \crossing$.
				
				So assume now $\segmentEnds((j,i)) \neq \{\bot\}$.
				Then, by base case~\ref{claim2},
				$P_j \cap \crossing$ is avoiding with both
				\[P_i[s_i, \alpha^{i,j}_{P_i}(P_j)] \cap \crossing \quad\text{and}\quad P_i[\omega^{i,j}_{P_i}(P_j), t_i] \cap \crossing.\]
				Since~$Q = \emptyset$, it holds that
				$$P_i[\segmentEnds(\sigma_{\start})] \subseteq P_i[s_i, \alpha^{i,j}_{P_i}(P_j)] \text{ or } P_i[\segmentEnds(\sigma_{\start})] \subseteq P_i[\omega^{i,j}_{P_i}(P_j),t_j].$$
				Assume \wilog{} the former.
				Then $P_i[\segmentEnds(\sigma_{\start})] \cap \crossing$
				is a segment of~$P_i[s_i, \alpha^{i,j}_{P_i}(P_j)] \cap \crossing$.
				Since we also have by \cref{lem:outside-means-avoiding} that
				$P_i[s_i, \alpha^{i,j}_{P_i}(P_j)] \cap \crossing$ and~$P_j \cap \crossing$ are avoiding,
				it follows that~$P_i[\segmentEnds(\sigma_{\start})] \cap \crossing$ and~$P_j \cap \crossing$ are avoiding.
			\item It remains to analyze the case where~$\alpha^{g,j}_{P_j[\segmentEnds(\sigma_{\en})]}(Q) = \bot$.
				We assume~${\segmentEnds(\sigma_{\en}) \neq \{\bot\}}$ and~${Q \neq \emptyset}$ as we can otherwise use the proofs above.
				Assume towards a contradiction that~${P_j \cap \crossing}$ and~$P_i[\segmentEnds(\sigma_{\start})] \cap \crossing$ are not avoiding.
				Then there are minimal segments~$S_i \subseteq P_i[\segmentEnds(\sigma_{\start})] \cap \crossing$ and~$S_j \subseteq P_j \cap \crossing$ that are not avoiding.
				
				We claim that $S_j \subseteq P_j[\segmentEnds(\sigma_{\en})]$,
				since otherwise the induction hypothesis~\ref{claim2} (applied to $\sigma_{\en}$) would state that
				$S_j$ is avoiding with $P_i[\segmentEnds(\sigma')] \cap \crossing$.
				This would contradict our assumption since $P_i[\segmentEnds(\sigma_{\start})] \cap \crossing$ is a segment of $P_i[\segmentEnds(\sigma')] \cap \crossing$.
				
				By \cref{lem:outside-means-avoiding}, we further have $S_i \subseteq Q$.
				But since $\alpha^{g,j}_{P_j[\segmentEnds(\sigma_{\en})]}(Q) = \bot$,
				\cref{lem:no-alpha-means-avoiding} yields that $P_j[\segmentEnds(\sigma_{\en})] \cap \crossing$ and $Q \cap \crossing$ are avoiding --- a contradiction.
		\end{enumerate}
	\item
		Since the two claims in~\ref{claim2} are symmetrical, we will only show that~${P_{i}[\segmentEnds(\sigma_{\start})] \cap \crossing}$ and~$P_{j}[s_j, u] \cap \crossing$ are avoiding.
		To this end, assume towards a contradiction that there are minimal non-avoiding segments
		\[ S_i \subseteq P_i[\segmentEnds(\sigma_{\start})] \cap \crossing \quad\text{and}\quad S_j \subseteq P_j[s_j,u] \cap \crossing. \]
		
		We assume that $S_i \subseteq P_i[\segmentEnds((j, i))]$ and thus $S_i \subseteq Q$.
		Otherwise, $S_i$ and $P_j \cap \crossing$ are avoiding
		by \cref{lem:outside-means-avoiding} (if $\segmentEnds((j, i)) \neq \{\bot\}$)
		or by \cref{lem:no-alpha-means-avoiding} (if $\segmentEnds((j, i)) = \{\bot\}$),
		and~$P_j[s_j,u] \cap \crossing$ is clearly a segment of $P_j \cap \crossing$.
		
		By assumption, we have $\segmentEnds(\sigma) \neq \{\bot\}$ implying~$\segmentEnds(\sigma_{\en}) \neq \{\bot\}$ by \cref{obs:less-is-more}.
		Let~$\{y,z\} \defeq \segmentEnds(\sigma_{\en})$  with $y <^j z$.
		If~$S_j \subseteq P_j[y,u] \cap \crossing$, then~${S_i \subseteq Q \cap \crossing}$
		and $S_j$ are avoiding by \cref{lem:outside-means-avoiding}.
		
		Otherwise, we have $S_j \subseteq P_j[s_j, y]$.
		Applying induction hypothesis \ref{claim2} to $\sigma_{\en}$ then gives that~${S_i \subseteq P_i[\segmentEnds(\sigma')] \cap \crossing}$ and $S_j \subseteq P_j[s_j,y] \cap \crossing$
		are avoiding.
		\qedhere
	\end{enumerate}
\end{proof}

We are now equipped to show the main result of this section.

\begin{proposition}
	\label{lem:correctness}
	For~$i,j \in [k]$ let $S_i\subseteq P_i \cap \crossing$ and $S_j \subseteq P_j \cap \crossing$ be two minimal segments.
	If~$\varlabels[S_i] \ne \varlabels[S_j]$, then~$S_i$ and~$S_j$ are avoiding.
\end{proposition}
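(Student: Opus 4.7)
The plan is to reduce to an application of \cref{lem:avoiding-cases} by exhibiting a well-chosen permutation. If $i=j$, then $S_i$ and $S_j$ are two distinct minimal segments of $V(P_i)\cap\cross$; since $P_i$ is $i$-colored, \cref{thm:diamond_condition} forces their diamond areas to meet at most in the one endpoint they may share, so $S_i$ and $S_j$ are avoiding.

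For $i\neq j$, my plan is, after possibly swapping $(i,S_i)$ and $(j,S_j)$, to fix some $a\in\varlabels[S_i]\setminus\varlabels[S_j]$ together with a witness $\sigma=(a=\ell_1,\ldots,\ell_h=i)$ satisfying $j\notin\setp(\sigma)$. Once this is established, the extension $\sigma':=(\sigma,j)$ is a valid permutation ending as $(\ldots,i,j)$, and since $a\notin\varlabels[S_j]$ it cannot witness $a\in\varlabels[S_j]$; thus either $\segmentEnds(\sigma')=\{\bot\}$, or $\segmentEnds(\sigma')=\{u,v\}$ with $S_j\not\subseteq P_j[u,v]$. As $S_j$ is minimal and $u,v\in V(P_j)\cap\cross$, the latter forces $S_j\subseteq P_j[s_j,u]\cap\cross$ or $S_j\subseteq P_j[v,t_j]\cap\cross$. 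Applying \cref{lem:avoiding-cases}(i) or (ii) accordingly, and using \cref{thm:supersegments-avoiding} to pass from the supersegment $P_i[\segmentEnds(\sigma)]\cap\cross\supseteq S_i$ to $S_i$ itself, yields that $S_i$ and $S_j$ are avoiding.

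The main obstacle is producing such a witness with $j\notin\setp(\sigma)$. If $a\in\{i,j\}$ the trivial witness $(a)$ already avoids the other index, so assume $a\notin\{i,j\}$ and pick $(a,\sigma)$ of minimum length among witnesses for both directions of $\varlabels[S_i]\triangle\varlabels[S_j]$. By iterated \cref{obs:less-is-more}, any suffix of $\sigma$ is itself a witness for its first element; so if $j=\ell_p$ for some $p<h$, then $(\ell_{p+1},\ldots,\ell_h)$ is a strictly shorter witness for $\ell_{p+1}\in\varlabels[S_i]$, forcing $\ell_{p+1}\in\varlabels[S_j]$ by minimality. The delicate remaining case is that $\ell_{p+1}$ is a shared label. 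I plan to resolve it by additionally exploiting the prefix $(\ell_1,\ldots,\ell_p=j)$, which has $\segmentEnds\ne\{\bot\}$ (the non-$\bot$ property is prefix-closed by \cref{def:crossing-sets}) yet whose $P_j$-segment cannot contain $S_j$ (since $a\notin\varlabels[S_j]$), and to combine this geometric information with the witness for $\ell_{p+1}\in\varlabels[S_j]$ via the ``parallel'' structure of \cref{lem:parallel-case} to either derive a contradiction to minimality or build an alternative $j$-free witness through the recursive construction in \cref{def:crossing-sets}.
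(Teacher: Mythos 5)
Your handling of the cases where a witness $\sigma=(a,\ldots,i)$ for some $a\in\varlabels[S_i]\setminus\varlabels[S_j]$ exists with $j\notin\setp(\sigma)$ is sound and essentially the paper's argument: extend to $\sigma'=(\sigma,j)$, note that $S_j\not\subseteq P_j[\segmentEnds(\sigma')]$ because otherwise $a\in\varlabels[S_j]$, and invoke \cref{lem:avoiding-cases}(i) or (ii). The same goes for the degenerate cases $a\in\{i,j\}$ and $i=j$.

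The genuine gap is the case you yourself flag as ``delicate'' and leave as a plan: a minimum-length witness $\sigma=(a=\ell_1,\ldots,\ell_h=i)$ that contains $j=\ell_p$ in its interior, where $\ell_{p+1}$ is a shared label. Your intended resolution --- producing ``an alternative $j$-free witness'' or a contradiction to minimality --- cannot succeed in general. Take $k=3$ with $\{a,j,i\}=[3]$ in the configuration of \cref{fig:recursive-alignment}: $P_a$ and $P_i$ are $a,i$-noncrossing, so $\segmentEnds((a,i))=\{\bot\}$ and the \emph{only} witness for $a\in\varlabels[S_i]$ is $(a,j,i)$; if moreover $\varlabels[S_j]=\{i,j\}$, the symmetric difference is exactly $\{a\}$, every witness for it passes through $j$, and nothing contradicts minimality. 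So in this branch one must abandon the witness-manipulation strategy altogether. The paper instead argues geometrically: apply \cref{lem:parallel-case} along $\sigma$ to the \emph{prefix} $\sigma'=(\ell_1,\ldots,\ell_p=j)$ to get $\segmentEnds(\sigma')\ne\{\bot\}$ and $S_i\subseteq^{j}P_j[\segmentEnds(\sigma')]$; since $a\in\varlabels\bigl[P_j[\segmentEnds(\sigma')]\bigr]$ but $a\notin\varlabels[S_j]$, the minimal segment $S_j$ is disjoint from $P_j[\segmentEnds(\sigma')]$, and strict monotonicity of $P_j$ in its $j$-coordinate yields $\pos{S_i}^{j}\cap\pos{S_j}^{j}=\emptyset$, hence $S_i$ and $S_j$ are avoiding. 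Without this (or an equivalent) argument the proof is incomplete precisely in the case that makes the proposition nontrivial.
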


\begin{proof}
	We start with the case where~$i \notin \varlabels[S_j]$.
	Then by \cref{def:crossing-sets},~${\segmentEnds((i,j)) = \{\bot\}}$
	or~$S_j$ is not a segment of $P_j[u,v] \cap \crossing$, where~$\{u,v\} \defeq \segmentEnds((i,j)) \neq \{\bot\}$.
	In both cases $S_j$~and~$P_i \cap \crossing$ are avoiding by applying \cref{lem:avoiding-cases} to $\sigma = (i,j)$.
	The case where~$j \notin \varlabels[S_i]$ is analogous.
	
	It remains to consider the case where
	$i,j \in \varlabels[S_i] \cap \varlabels[S_j]$.
	\Wilog{} there is~$d \in \varlabels[S_i] \setminus \varlabels[S_j]$.
	By \cref{def:crossing-sets}, there is a set~${\Phi = \{\ell_1, \ell_2, \ldots, \ell_{|\Phi|}\}}$ and a permutation~${\sigma = (\ell_1, \ell_2, \ldots, \ell_{|\Phi|})}$ of~$\Phi$ such that $\ell_1 = d$,~$\ell_{|\Phi|} = i$, and $S_i \subseteq P_i[\segmentEnds(\sigma)] \cap \crossing$.
	
	If~$j \notin \Phi$, then apply \cref{lem:avoiding-cases} to~$\sigma' \defeq (\ell_1,\ell_2,\ldots,\ell_{|\Phi|},j)$
	to see that $S_j$ and $P_i[\segmentEnds(\sigma)]$ are avoiding
	(note that $S_j$ cannot be a segment of~$P_j[\segmentEnds(\sigma')]$	since~$d \notin \varlabels[S_j]$).
	
	Now assume that~$j \in \Phi$
	and let $x$ be such that~$j = \ell_x$.
	For all $h \leq \abs{\Phi}$, we denote by~${\sigma_h \defeq (\ell_1, \ell_2, \ldots, \ell_h)}$ the respective prefix of~$\sigma$.
	Next, since~$S_i \subseteq P_i[\segmentEnds(\sigma)]  \cap \crossing$, it follows that~${\segmentEnds(\sigma) \neq \{\bot\}}$
	and, by \cref{def:crossing-sets}, $\segmentEnds(\sigma_h) \neq \{\bot\}$ for all~$h$.
	By \cref{lem:parallel-case}, for each $h > x$ there is
	a subpath~$Q'$ of $P_{\ell_{h-1}}$
	such that $P_{\ell_h}[\segmentEnds(\sigma_h)] =^{\setp(\sigma_h)} Q'$.
	Thus, by transitivity, there is a subpath~$Q''$ of~$P_j[\segmentEnds(\sigma_x)]$ with $Q'' =^{\setp(\sigma_x)} P_i[\segmentEnds(\sigma)]$.
	In particular, $S_i \subseteq^{j} P_j[\segmentEnds(\sigma_x)]$.

	Let $Q_i$ be a path following~$S_i$ and $Q_j$ a path following $S_j$.
	By \cref{thm:crossing-set-valid}, $Q_i$ and~$Q_j$ are both $j$-colored.
	By the above $Q_i \subseteq^j P_j[\segmentEnds(\sigma_x)]$.
	Since $d \notin \varlabels[S_j]$, it follows that~$S_j$ is not a segment of $P_j[\segmentEnds(\sigma_x)] \cap \crossing$ and therefore~$\pos{Q_j}^j \cap \pos{P_j[\segmentEnds(\sigma_x)]}^j \subseteq \pos{\segmentEnds(\sigma_x)}^j$.
	
	This proves~$\pos{Q_i}^j \cap \pos{Q_j}^j \subseteq \pos{P_j[\segmentEnds(\sigma_x)]}^j$
	and thus $S_i$ and $S_j$ are $\{j\}$-avoiding.
\end{proof}

\section{The Algorithm: Utilizing the Geometry}\label{sec:the-algorithm}
\label{sec:algorithm}

In this section, we finally present the algorithm behind \cref{thm:main-thm}.
In a nutshell, we first guess all the \mpath{s}~$T_i := P_i \cap \crossing$ and the respective ends~$\segmentEnds$ corresponding to some solution (if one exists).
We then compute all minimal segments of each \mpath{}~$T_i$, compute their respective~$\varlabels$,
and partition the segments such that all segments in the same part of the partition are strictly monotone in a common coordinate 
and that segments in distinct parts of the partition are avoiding.
The crucial improvement over the algorithm by \citet{Loc21} is that our partition is much smaller.
Afterwards, we find for all segments in one part of the partition disjoint paths that follow the respective segments via dynamic programming.

To this end, we look at directed acyclic graphs (DAGs) that result from the input graph by removing and directing the edges.
For a graph~$G$ with vectors~$\pos{v}$ for all~${v\in V}$ (as defined in \cref{sec:2D-geometry}) and a coordinate~$c$, we define the DAG~${D_c=(V,A)}$ with
\[A = \{(x,y) \mid \{x,y\} \in E(G) \land~\pos{y}^c - \pos{x}^c = 1\}.\]
Note that each edge~$\{u,v\}$ in any~$c$-colored path~$P$ fulfills~$|\pos{u}^c - \pos{v}^c| = 1$ by definition and each path in~$D_c$ corresponds to a~$c$-colored path in~$G$.
Moreover, all paths between two vertices~$u$ and~$v$ in~$D_c$ have the same length as they each need to contain exactly one vertex for each~$c$-coordinate~$i \in [\pos{u}^c,\pos{v}^c]$.
Thus, the following observation holds true.
\begin{observation}
	\label{obs:path-dag}
	A path $P$ in $G$ is $c$-colored if and only if $P$ or its reversal is a path in~$D_c$.
\end{observation}
Next, we define the problem \DPD.
We are given a DAG~$D$ and a list $(s_i,t_i)_{i\in [p]}$ of (possibly intersecting) terminal pairs, 
and ask whether there are pairwise internally disjoint $s_i$-$t_i$-path in $D$, for each $i \in [p]$.
Formally:
\decProb{\DPD}{A DAG~$D$ and~$p$ pairs~$(s_i,t_i)_{i \in [k]}$ of vertices.}{Are there~$p$ internally vertex-disjoint paths~$P_i$ in~$D$ such that each~$P_i$ is a shortest~\mbox{$s_i$-$t_i$-path}?}

\Citet{FHW80} showed an $n^{O(p)}$-time algorithm for \textsc{$p$-Disjoint Path on DAGs}.
For the completeness of our algorithm and to drop the big-O in the exponent, we 
show that~\textsc{\mbox{$p$-Disjoint} Paths on DAGs} can be solved in $O(n^{p-1}m + pn^p)$ time.
Our algorithm for~\textsc{\mbox{$p$-Disjoint} Paths on DAGs} does not contain new algorithmic ideas 
compared to the algorithm of \mbox{\citet{FHW80}}.

\begin{lemma}
		\label{lem:DP}
	An instance of \textsc{$p$-Disjoint Paths on DAGs} on a graph with~$n$~vertices can be solved in~$O(n^{p-1}m + pn^p)$ time.
\end{lemma}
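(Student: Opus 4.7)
The plan is to maintain a Boolean dynamic programming table indexed by $p$-tuples of vertices $(v_1,\ldots,v_p)\in V(D)^p$, where $f(v_1,\ldots,v_p)=1$ iff $D$ admits internally vertex-disjoint paths from $v_i$ to $t_i$ for every $i\in[p]$. The answer to the instance is then $f(s_1,\ldots,s_p)$. To sidestep corner cases arising from coinciding terminals, I would first preprocess the DAG by subdividing each terminal: replace each $s_i$ by a fresh source $s_i'$ with a single arc $s_i'\to s_i$, and each $t_i$ by a fresh sink with a single arc $t_i\to t_i'$. This adds only $O(p)$ vertices and makes all $2p$ terminals pairwise distinct while preserving the answer. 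Internally disjoint $s_i$-$t_i$-paths in the original DAG correspond exactly to vertex-disjoint $s_i'$-$t_i'$-paths in the modified DAG.

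Compute a topological ordering $\sigma\colon V(D)\to[n]$ in $O(n+m)\subseteq O(n^2)$ time, and process tuples in decreasing order of $\sum_i\sigma(v_i)$. The base case is $f(t_1',\ldots,t_p')=1$. For any other tuple, let $i^*$ be the (unique) index minimizing $\sigma(v_{i^*})$ among those $i$ with $v_{i^*}\ne t_i'$, and set $f(v_1,\ldots,v_p)=1$ iff some out-neighbor $w$ of $v_{i^*}$ in $D$ with $w\notin\{v_j:j\ne i^*\}$ satisfies $f(v_1,\ldots,v_{i^*-1},w,v_{i^*+1},\ldots,v_p)=1$. Note that by restricting to advancing only the single coordinate $v_{i^*}$, the per-state branching is at most $\deg^+(v_{i^*})\le n-1$.

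The main obstacle is justifying this restricted advancement rule. For soundness, suppose the DP's transition sequence encodes paths $R_1,\ldots,R_p$ and, for contradiction, that some vertex $v$ lies on two of them, say $R_i$ and $R_j$; without loss of generality path $i$ reaches $v$ first. Because a state can carry $v$ in at most one coordinate (the blocking check forbids stepping into a vertex already in the tuple), path $i$ must leave $v$ before path $j$ can enter. At the step when path $i$ leaves $v$, our rule certifies that $\sigma(v)\le\sigma(v_k)$ for every not-yet-finished front $v_k$. Since the $\sigma$-value of each front only increases along the sequence, every future front $v_j'$ of path $j$ satisfies $\sigma(v_j')\ge\sigma(v)$; but for path $j$ to step into $v$ we need $v$ to be an out-neighbor of $v_j'$ and hence $\sigma(v_j')<\sigma(v)$, a contradiction. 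Completeness follows by induction: given a valid solution, walk each coordinate along its prescribed path in the order dictated by the rule, and observe that vertex-disjointness guarantees the chosen out-neighbor never coincides with another current front.

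For the runtime: there are at most $n^p$ tuples, and each is processed in $O(n)$ time, since only the $O(n)$ out-neighbors of the single coordinate $v_{i^*}$ are examined and the blocking check against the tuple costs $O(p)=O(1)$. Together with the $O(n+m)=O(n^2)$ preprocessing this gives a total running time of $O(n^{p+1})$.
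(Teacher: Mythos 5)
Your proof is correct and follows essentially the same approach as the paper: a Boolean table indexed by $p$-tuples of path fronts, advanced one coordinate at a time by always moving the topologically extremal front, with a blocking check against the other coordinates. The only (cosmetic) differences are that you run the recurrence backward from the targets rather than forward from the sources, and you handle coinciding terminals by attaching fresh source/sink vertices instead of the paper's bookkeeping with the terminal set $V^{end}$; both yield the same $O(n^{p+1})$ bound.
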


\begin{proof}
Let~$D = (V,A)$ be a DAG and let~$(s_i,t_i)_{i \in [p]}$ be a set of~$p$ terminal pairs.
We define~$V^{\en} \defeq \bigcup_{i \in [p]}\{s_i,t_i\}$ to be the set of all terminals.
We also choose an arbitrary topological order of~$D$ and denote by~$u \prec v$ that~$u$ comes before~$v$ in this topological order.
We assume without loss of generality that~$s_i \preceq s_j$ for all~$i < j \in [p]$.
We further assume that~$s_i \preceq t_i$ for all~$i \in [p]$ as otherwise there can be no path from~$s_i$ to~$t_i$ and that~$p \leq n$ as we can iterate over all pairs~$(s_i,t_i)$ and delete those that are connected by an arc~$(s_i,t_i) \in A$.
All remaining paths have at least one inner vertex that has to be from~$V \setminus V^{\en}$ and that has to be unique for each path.
Hence, if there are at least~$n+1$ pairs remaining, then the instance has no solution.

We build a table~$T[x_1,x_2,\ldots,x_p] \in \{\true,\false\}$ that stores~$\true$ if and only if the following three criteria are fulfilled.
\begin{enumerate}[(i)]
	\item $x_i \in (V \setminus V^{\en}) \cup \{s_i,t_i\}$,
	\item $s_i \preceq x_i \preceq t_i$ for all~$x_i \in \{s_i,t_i\}$, and
	\item there exist $s_i$-$x_i$-paths such that each inner vertex of each of these paths is in~$V \setminus V^{\en}$ and that each vertex in~$V \setminus V^{\en}$ is contained in at most one of these paths. 
\end{enumerate}

If the table is completely filled, then there is a set of internally vertex-disjoint shortest~\mbox{$s_j$-$t_j$-paths} if and only if~$T[t_1,t_2,\ldots,t_p] = \true$ as the first two requirements are trivially fulfilled.
We initialize the table with~${T[s_1,s_2,\ldots,s_p] \defeq \true}$ as internally vertex-disjoint~\mbox{$s_i$-$s_i$-paths} trivially exist.
Moreover, for each tuple~$(x_1,\ldots,x_p) \in V^p$ if~${x_i \prec s_i}$,~${t_i \prec x_i}$, or~$x_i \in V^{\en} \setminus \{s_i,t_i\}$ for at least one~$i \in [p]$ or~$x_i = x_j \in V \setminus V^{\en}$ for some~$i \neq j$, then we set~$T[x_1,\ldots,x_p] \defeq \false$.
Note that there are~$n^p$ possible tuples and the initialization can be done in~$O(1)$ time per entry.

We next show how to compute the entries of~$T$.
To this end, for some tuple~$(x_1,x_2,\ldots,x_p)$, let~$x_\ell$ be a vertex such that~$x_\ell \neq s_\ell$ and~$x_i \preceq x_\ell$ for all~$x_i$ with~$i \in [p]$ and~$x_i \neq s_i$.
Moreover, let
\[N^*(x_i) \defeq \{v \mid (v,x_i) \in A \land v \in  \{s_i\} \cup (V \setminus (V^{\en} \cup \{x_1, \ldots, x_p\})) \}.\]
Finally, let
\[T[x_1,x_2,\ldots,x_p] \defeq \bigvee_{x'_\ell \in N^*(x_\ell)} T[x_1,x_2,\ldots,x_{\ell-1},x'_\ell,x_{\ell+1},\ldots, x_p].\]

We now show by induction on the sum of positions in the topological order of all~$x_i$ that~$T[x_1,x_2,\ldots,x_p] = \true$ if and only if the three criteria are fulfilled.
In the base case,~$x_i = s_i$ for all~$i \in [p]$ and hence~$T[x_1,x_2,\ldots,x_p]=\true$ or there is some~$x_i$ such that~$x_i \prec s_i$ and therefore~$T[x_1,x_2,\ldots,x_p]=\false$.
Note that there is no~$s_i$-$x_i$-path in the latter case.

Now to show the statement for some table entry~$T[x_1,x_2,\ldots,x_p]$, assume that the statement holds for all table entries~$T[x'_1,x'_2,\ldots,x'_p]$ such that~$x'_i \preceq x_i$ for all~$i \in [p]$ and~$x'_j \prec x_j$ for at least one~$j \in [p]$.
To this end, first assume that~$T[x_1,x_2,\ldots,x_p] = \true$.
Since~${T[x_1,x_2,\ldots,x_p] = \true}$, it was not set to~$\false$ in the initialization and thus i) and ii) are satisfied.
By construction, there is an~$x'_\ell \in N^*(x_\ell)$ such that \[{T[x_1,x_2,\ldots,x_{\ell-1},x'_\ell,x_{\ell+1},\ldots,x_p] = \true}.\]
By the induction hypothesis, there are internally vertex-disjoint~$s_\ell$-$x'_\ell$- and~\mbox{$s_j$-$x_j$-paths} for all~${j \in [p] \setminus \{\ell\}}$ such that~$s_\ell \preceq x'_\ell \preceq t_\ell$ and~$x'_\ell \in (V \setminus V^{\en}) \cup \{s_\ell,t_\ell\}$.
By definition of~$x_\ell$ it holds that~$x_i \preceq x_\ell$ for all~$x_i$ with~$i\in [p]$ and~$x_i \neq s_i$.
Moreover, by the recursive definition of~$T$ it holds that~$x_\ell$ is not contained in any of the~$s_i$-$x_i$-paths for~$i \in [p]$ or~$x_\ell = t_\ell$.
Note that in the latter case it holds for each~$i \in [p]$ that if~$x_i = x_\ell$, then~$x_i \in \{s_i,t_i\}$ by induction hypothesis.
Thus, the~$s_\ell$-$x'_\ell$-path can be extended by the arc~$(x'_\ell,x_\ell)$ and the resulting path combined with the other~$s_i$-$x_i$-paths satisfies iii) in both cases.

To show the other direction assume that~$x_1,x_2,\ldots,x_p$ satisfy i) to iii).
Then, consider the~\mbox{$s_\ell$-$x_\ell$-path} and the predecessor~$x'_\ell$ of~$x_\ell$.
Note that~$x'_\ell$ exists as otherwise~$x_i = s_i$ for all~${i \in [p]}$ and hence we are in the base case.
By construction,~${x'_\ell \in N^*(x_\ell) \subseteq (V \setminus V^{\en}) \cup \{s_i\}}$.
Note further that~$x'_\ell \prec x_\ell \preceq t_\ell$, implying~$x'_\ell \neq t_\ell$ and hence i) is also satisfied by~$x'_\ell$.
Further, since there is an~\mbox{$s_\ell$-$x'_\ell$-path} (a subpath of the~$s_\ell$-$x_\ell$ path), it holds that~${s \preceq x'_\ell \prec x_\ell \preceq t_\ell}$ and thus~$x'_\ell$ also satisfies ii).
Finally, iii) is also satisfied by the~$s_\ell$-$x'_\ell$-subpath combined with the other~$s_i$-$x_i$-paths.
The induction hypothesis then states that \[T[x_1,x_2,\ldots,x_{\ell-1},x'_\ell,x_{\ell+1},\ldots,x_p] = \true.\]
Since~$x'_\ell \in N^*(x_\ell)$, it holds that~$T[x_1,x_2,\ldots,x_p] = \true$.
Thus, the statement holds for all table entries~$T[x_1,x_2,\ldots,x_p]$.

It remains to analyze the running time.
Note that there are at most~$n^p$ possible table entries and computing one entry takes~${O(p + \deg(x_\ell))}$~time as~$V^{\en}, \ell$, and~$N^*(x_\ell)$ can be computed in~${O(p + \deg(x_\ell))}$~time and iterating over all neighbors of~$x_\ell$ takes~$O(\deg(x_\ell))$ time. 
Hence, by the handshaking lemma, the overall running time is in~$O(n^{p-1}m + pn^p)$.
\end{proof}

Making use of \cref{lem:DP}, we can now present our algorithm for solving \kDSP{}.
Pseudo-code is given in \cref{alg:dynProg}.
\begin{algorithm}[t!]
	\caption{Our algorithm for \kDSP.}
	\label{alg:dynProg}
	\myproc{\solve{$G$, $(s_i,t_i)_{i \in [k]}$}}
	{
		\ForEach{possible crossing set~$(T_i)_{i \in [k]}$ with endpoints~$\marbles$ \label{alg:dynProg:guessing}}
		{
			\tcc{We now try to find a solution~$\calP \defeq (P_i)_{i \in [k]}$ with~$T_i = V(P_i) \cap \crossing$ for all~$i \in [k]$ and~$\marbles = \segmentEnds$.}
			\ForEach{$i \in [k]$}{
				$\mathcal P_i \gets \emptyset$ \tcp{$P_i$ contains all segments corresponding to~$D_i$}
				}
			\ForEach{\emph{minimal segment $S$ of some~$T_i$ with~$i \in [k]$\label{alg:dynProg:assignment}}}{
				$\varmarks[S_i] \gets \emptyset$\;
				\ForEach{\emph{permutation~$\sigma = (\ell_1,\ell_2,\ldots,i)$ with~$\marbles(\sigma) = \{\alpha,\omega\} \neq \{\bot\}$ and~$\alpha \leq^i \start(S_i) <^i \en(S_i) \leq^i \omega$}\label{alg:dynProg:marks}}{
					$\varmarks[S_i] \gets \varmarks[S_i] \cup \setp(\sigma)$\;
				}
				$j \gets \min \varmarks[S]$\;
				$x \gets \argmin \{\pos{v}^j \mid v \in \{\start(S),\en(S)\}\}$\label{alg:dynProg:ifs}\;
				$y \gets \argmax \{\pos{v}^j \mid v \in \{\start(S),\en(S)\}\}$\;
				$\mathcal P_j = \mathcal P_j \cup \{ (x,y) \}$ \label{alg:dynProg:ife}\;
			}
			\ForEach{$j \in [k]$}{
					Order~$\mathcal P_j = ((x_1,y_1),(x_2,y_2),\ldots)$ such that~$\pos{x_1}^j \leq \pos{x_2}^j \leq \ldots$ \label{alg:dynProg:sorting}\;
			}
			\lIf{\emph{all instances $(D_i,\mathcal P_i)$ of \textsc{$|\mathcal P_i|$-Disjoint Paths on DAGs} are yes-instances and the combined solutions form a solution of \kDSP}}{
				\KwRet{$\true$} \label{alg:dynProg:yes}
			}
		}
		\KwRet{$\false$}\label{alg:dp-end}
	}
\end{algorithm}%
Next, we show that \cref{alg:dynProg} is correct and runs in~$O(n^{16k+k!+k+1})$ time.
We start with the analysis of the running time.

\begin{lemma}
		\label{lem:running-time}
	\cref{alg:dynProg} runs in~$O(k \cdot n^{16k \cdot k! + k + 1})$ time.
\end{lemma}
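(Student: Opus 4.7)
The strategy is to separately bound the two sources of exponential blow-up in Algorithm~\ref{alg:dynProg}: the outer enumeration of the crossing data and the inner calls to the disjoint-paths dynamic program of Lemma~\ref{lem:DP}.

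First I would bound the size of the crossing set. By Definition~\ref{def:crossing-sets}, $\cross$ is the union of the sets $\cross^\sigma$ indexed by the permutations~$\sigma$ of nonempty subsets of~$[k]$. The number of such permutations is $\sum_{j=1}^{k}\binom{k}{j}j!\le k\cdot k!$. Each $\cross^\sigma$ contains at most a constant number of vertices: two path endpoints and at most one vertex for each of $\alpha^{a,b}_P(Q),\omega^{a,b}_P(Q),\partial^{a,b}_P(Q),\varpi^{a,b}_P(Q),\delta^{a,b}_P(Q)$ (see Definition~\ref{def:2d-crossing-set}), and for $h\ge 3$ both paths $P$~and~$Q$ contribute such a set. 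Totalling these gives $|\cross|\le 12\,k\cdot k!$. Since~$\segmentEnds(\sigma)$ is always a pair of vertices already lying in~$\cross$ (or~$\bot$), fixing $(T_i)_{i\in[k]}$ determines $\segmentEnds$ up to a polynomial number of choices. Consequently, the outer \textbf{foreach} in line~\ref{alg:dynProg:guessing} is executed at most $n^{12\,k\cdot k!}$ times (up to polynomial factors).

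Next I would analyze one iteration of the outer loop. Lines~\ref{alg:dynProg:assignment}--\ref{alg:dynProg:ife} iterate over all minimal segments (at most~$|\cross|$ of them) and, for each, over all permutations $\sigma$ (again at most $k\cdot k!$), testing whether $S\subseteq\marbles(\sigma)$ and updating $\varmarks$; this runs in time $\poly(n,k\cdot k!)$. The sorting step in line~\ref{alg:dynProg:sorting} is likewise polynomial. The real cost comes from solving, for every $i\in[k]$, the instance $(D(G,i),\mathcal P_i)$ of \textsc{$|\mathcal P_i|$-Disjoint Paths on DAGs} via Lemma~\ref{lem:DP}, which costs $O(n^{|\mathcal P_i|+1})$. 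I would then bound $\max_i|\mathcal P_i|\le k$ by arguing that, after the sort in line~\ref{alg:dynProg:sorting}, the pairs in $\mathcal P_i$ can be grouped so that each of the at most~$k$ marble paths contributes at most one pair per $\mathcal P_i$ (successive segments on the same path can be linked into a single pair without altering feasibility because of the monotonicity of $i$-coloured paths). Summing over~$i$ yields an inner cost of $O(k\cdot n^{k+1})$.

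Multiplying the two contributions gives the claimed total $O(k\cdot n^{12\,k\cdot k!+k+1})$.

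The main obstacle is Step~3: justifying that the dynamic program does not explode. Two subtleties show up here. First, one must argue that the number of permutations $\sigma$ for which $\segmentEnds(\sigma)\neq\{\bot\}$ is at most $k\cdot k!$ and that each contributes at most a constant number of new vertices to $\cross$; this is an explicit but mildly tedious counting exercise. Second, one must justify the bound $|\mathcal P_i|\le k$; this requires combining Lemma~\ref{lem:correctness} (segments with different labels are avoiding, so different "blocks" in $\mathcal P_i$ do not interact) with Observation~\ref{obs:path-dag} (consecutive $i$-coloured segments on the same path can be merged in the $i$-layered DAG without loss). The remaining steps are book-keeping.
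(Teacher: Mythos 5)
Your outer-loop analysis is fine (the paper gets a slightly tighter bound of $n^{8k\cdot k!}$ guesses by noting that $\delta^{a,b}_P(Q)\neq\bot$ forces $\alpha^{a,b}_P(Q)=\omega^{a,b}_P(Q)=\partial^{a,b}_P(Q)=\varpi^{a,b}_P(Q)=\bot$, but your $n^{12k\cdot k!}$ is also a valid upper bound). The genuine gap is your claim that $\max_i\abs{\mathcal P_i}\le k$, which is what you need to make the inner cost $O(k\cdot n^{k+1})$ and hence to land on the stated exponent. This bound is not justified and is false in general. The part $\mathcal P_i$ receives one terminal pair per minimal segment $S$ with $\min\varmarks[S]=i$, and a single \mpath{} $T_j$ can contribute \emph{many} such segments to the \emph{same} part: for example, $P_3$ may alternate between segments labelled $\{3\}$ (assigned to $\mathcal P_3$) and segments labelled $\{1,3\}$ or $\{2,3\}$ (assigned to $\mathcal P_1$ or $\mathcal P_2$), so the $\{3\}$-segments landing in $\mathcal P_3$ are separated by segments routed elsewhere and cannot be concatenated into a single terminal pair. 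Even for two \emph{consecutive} minimal segments $(u,v)$ and $(v,w)$ in the same part, merging them into the pair $(u,w)$ is unsound: a $u$-$w$-path in the $i$-layered DAG need not pass through $v$, yet $v$ is a guessed vertex of the crossing set whose presence on the solution path is exactly what the avoidance arguments (\cref{lem:correctness}) rely on.

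The paper instead bounds $\abs{\mathcal P_i}$ by the total number of non-trivial minimal segments: each permutation $\sigma$ places at most four vertices on a \mpath{} and, since $\infnorm{\pos{\alpha}-\pos{\partial}}=1$ and $\infnorm{\pos{\varpi}-\pos{\omega}}=1$, these create at most two new non-trivial minimal segments, giving at most $4k\cdot k!+k$ segments overall and hence $p_i\le 4k\cdot k!+k$. The dynamic program of \cref{lem:DP} then costs $O(n^{4k\cdot k!+k+1})$ per part, and combining this with the $n^{8k\cdot k!}$ guesses yields the exponent $12k\cdot k!+k+1$. That your (incorrect) decomposition multiplies out to the same final exponent is a numerical coincidence; as written, the step bounding the size of the disjoint-paths instances does not hold, so the proof is incomplete.
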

\begin{proof}
	First, observe that there are at most~$k \cdot k!$ different permutations of subsets of~$k$ objects as there are exactly~$k!$ permutations of exactly~$k$ objects and each of these can be truncated at~$k$ positions to get any permutation of any smaller (non-empty) subset of objects.
	Second, observe that by \cref{def:crossing-sets} there are at most eight vertices guessed for each sequence~$\sigma$ as if~$\delta^{i,j}_{P}(Q) \neq \bot$, then~$\alpha^{i,j}_{P}(Q) = \omega^{i,j}_{P}(Q) = \partial^{i,j}_{P}(Q)= \varpi^{i,j}_{P}(Q) = \bot$.
	Hence, at most~$8k \cdot k!$ vertices need to be guessed, which requires at most $n^{8k \cdot k!}$~attempts.
		
	Next, we analyze the running time of each iteration of the main foreach-loop in~\cref{alg:dynProg}.
	By \cref{def:crossing-sets}, there are at most four vertices on a \mpath{}~$T_i$ for each sequence~$\sigma$ and each of these vertices increases the number of minimal segments on~$T_i$ by at most one.
	Note that for each~$\sigma$ the set $\crossing^{\sigma}$ contains vertices from at most two paths.
	Thus, we create at most~$8k\cdot k!$ new segments overall.
	Since we start with~$k$ \mpath{s}, there are at most~$8k\cdot k! + k$ minimal segments.
	Thus, there are at most~$(8k\cdot k! + k)\cdot (k\cdot k!)$ iterations of the loop in \cref{alg:dynProg:marks}, each of which takes constant time.
	Since all other operations in the loop in \cref{alg:dynProg:assignment} take constant time, the overall running-time for this loop is in~$O((8k\cdot k! + k)\cdot (k\cdot k!))$.
	Each iteration of \cref{alg:dynProg:sorting} can be done in~$O(n)$ time using bucket sort and hence the overall running time for all iterations is in~$O(n\cdot k)$.

	Next, there are~$k$ instances of \textsc{$p_i$-Disjoint Paths on DAGs} that are solved using \cref{lem:DP}, where~$p_i \leq 8k \cdot k! + k$ and~$p_i \leq n$ for all~$i \in [k]$.
	By \cref{lem:DP}, the running time for solving one instance is in~$O(n^{8k \cdot k! + k-1}m + n^{8k \cdot k! + k + 1}) \subseteq O(n^{8k \cdot k! + k+1})$ and the running time for solving all instances is therefore in~$O(k \cdot n^{8k \cdot k! + k + 1})$.
	Lastly, we verify in \cref{alg:dynProg} that the solutions found can indeed be merged into one solution for \kDSP.
	Note that we only stated the decision version of~\textsc{$p$-Disjoint Paths on DAGs} but the actual solution can be found using a very similar algorithm where we do not only store~$\true$ or~$\false$ in the table~$T$ but also some set of disjoint paths corresponding to each table entry that stores~$\true$.
	Verifying a solution can be done in~$O(k\cdot n)$ time by iterating over all solution paths and verifying that between each pair of consecutive vertices there is an edge, that all paths are shortest paths, and that all paths are internally vertex-disjoint.
	The last verification can be done by marking all inner vertices of each path and if some vertex is already marked once and visited again, then return~$\false$ and otherwise return~$\true$. 
	Thus, the overall running time of \cref{alg:dynProg} is in
	\begin{equation*}
		O(n^{8k \cdot k!} \cdot ((8k\cdot k! + k) \cdot (k\cdot k!) + n \cdot k + k \cdot n^{8k \cdot k! + k + 1} + n\cdot k) \subseteq O(k \cdot n^{16k \cdot k! + k + 1}). \qedhere
	\end{equation*}
\end{proof}

For the correctness of \cref{alg:dynProg}, we need to show that each part of the partition of minimal segments can be solved independently.
This follows from~\cref{lem:correctness} together with the fact that \cref{alg:dynProg} exhaustively tries all possibilities for the crosssing set~$\crossing$.
Together with \cref{lem:running-time}, this implies \cref{thm:main-thm}.

\algorithmThm*

\begin{proof}
	We use \cref{alg:dynProg} and focus on the correctness as the running time is already analyzed in \cref{lem:running-time}.
	If \cref{alg:dynProg} returns~$\true$, then \cref{alg:dynProg:yes} is executed and a solution is verified.
	It remains to show that if there is some solution, then \cref{alg:dynProg} returns~$\true$.
	If there is some solution~$\calP = (P_i)_{i \in [k]}$, then let~$\crossing$ be its crossing set (\cref{def:crossing-sets}).
	Then, there is some iteration of \cref{alg:dynProg:guessing} where all guesses are correct, that is, $\marbles = \segmentEnds$ and~${T_i = V(P_i) \cap \crossing}$.
	We now consider this iteration of \cref{alg:dynProg:guessing}.
	
	Note that \cref{thm:crossing-set-valid} states that the pair~$\{\start(S),\en(S)\}$ is~$c$-colored for each sequence~$\sigma$, each segment~$S$ with~$\{\start(S),\en(S)\} = \marbles(\sigma) = \segmentEnds(\sigma)$ and for each~$c \in \setp(\sigma)$.
	Hence, the same also holds for each minimal segment~$S' \subseteq S$.
	By \cref{alg:dynProg:marks}, there is a solution where the shortest paths between the endpoints of each minimal segment~$S$ are strictly~$c$-monotone for each~${c \in \varlabels[S]}$.
	Note that~${\varlabels[S] = \varmarks[S]}$ in this iteration of \cref{alg:dynProg:guessing}.
	Hence, each path following~$S$ is strictly~$c$-increasing for each~${c \in \varmarks[S]}$ and by \cref{obs:path-dag} this shortest path is contained in~$D_c$.
	Thus, we can find \emph{some} solution for each minimal segment using \cref{lem:DP} such that all paths for these minimal segments with the same marks are internally vertex-disjoint.
	Since~$\varmarks[S] = \varlabels[S]$ for all minimal segments, by \cref{lem:correctness}, all shortest paths between endpoints of minimal segments with different marks are internally vertex-disjoint.
	Hence, the result computed by \cref{alg:dynProg} is a solution to \kDSP{} and thus the algorithm returns~$\true$.
\end{proof}

\section{Lower bounds for \kDSP} %
\label{sec:eth}
The \emph{Exponential-Time Hypothesis (ETH)} states that 
there is no $2^{o(n)}$-time algorithm for \textsc{3-SAT}, where $n$ is the number of variables \cite{impagliazzo2001complexity}.
We show that there is no $f(k)\cdot n^{o(k)}$-time algorithm for \kDSP, unless the ETH fails.

\hardnessThm*
\begin{proof}
	We reduce from \textsc{Multicolored Clique} which is defined as follows. 
	\decProb{Multicolored Clique (MCC)}
	{A graph $G=(V,E)$, an integer~$k\in \NN$, and a coloring function~$c\colon V \rightarrow [k]$. }
	{Is there a multicolored clique in~$G$, that is, a set of pairwise adjacent vertices containing exactly one vertex of each color in~$[k]$?}

	We provide a polynomial-time reduction 
	from an instance~$(G,c,k)$ of \textsc{Multicolored Clique} to an instance~$(G' = (V',E'),(s_i,t_i)_{i\in[2k]})$ of \textsc{$2k$-DSP}.
	
	The basic idea is as follows: 
	For each vertex in~$G$ we will have a ``horizontal'' and a ``vertical'' path in~$G'$.
	These paths will form a grid in~$G'$, see \cref{fig:hardness-example} for an illustration. 
	\begin{figure}[t]
		\newcommand{\edgeInKDSPInstance}{
			\node[vertex,fill=white,minimum size=5pt] at (\i + \j * \smallDist - 2 * \smallDist,2 * \smallDist + -\x - \y * \smallDist) {};
			\node[vertex,fill=white,minimum size=5pt] at (\x + \y * \smallDist - 2 * \smallDist,2 * \smallDist + -\i - \j * \smallDist) {};
			\node[vertex,fill=black] at (\i + \j * \smallDist - 2 * \smallDist - \smallerDist,2 * \smallDist + -\x - \y * \smallDist) {};
			\node[vertex,fill=black] at (\i + \j * \smallDist - 2 * \smallDist,2 * \smallDist + -\x - \y * \smallDist + \smallerDist) {};
			\node[vertex,fill=black] at (\x + \y * \smallDist - 2 * \smallDist - \smallerDist,2 * \smallDist + -\i - \j * \smallDist) {};
			\node[vertex,fill=black] at (\x + \y * \smallDist - 2 * \smallDist,2 * \smallDist + -\i - \j * \smallDist + \smallerDist) {};
		}
		\centering
		\begin{tikzpicture}
			\newcommand{\colorA}{blue}
			\newcommand{\colorB}{red!90!black}
			\newcommand{\colorC}{green!60!black}
			\newcommand{\colorD}{black}
			
			\newcommand{\smallDist}{0.3}
			\newcommand{\smallerDist}{0.1}
			\newcommand{\cliqueVertices}{1, 3, 2, 1} %
		
			\begin{scope}[yshift=-1cm]
				\foreach \x in {1,...,3}{
					\node[vertex,fill=\colorA,
						label=above:{$1,\x$}
						] (v-1\x) at (\x,0) {};
					\node[vertex,fill=\colorB,
						label=right:{$2,\x$}
						] (v-2\x) at (4,-\x) {};
					\node[vertex,fill=\colorC,
						label=below:{$3,\x$}
						] (v-3\x) at (\x,-4) {};
					\node[vertex,fill=\colorD,
						label=left:{$4,\x$}
						] (v-4\x) at (0,-\x) {};
				}
			\end{scope}
			
			\begin{scope}[xshift=5.5cm, xscale=1.4, yscale=1.2]
			
				\foreach[count=\i] \color in {\colorA,\colorB,\colorC,\colorD}{
					\node[vertex,fill=\color,label=below:{$s_\i$}] (s-\i) at (0,-\i) {};
					\node[vertex,fill=\color,label=below:{$t_\i$}] (t-\i) at (5,-\i) {};
					
					\pgfmathtruncatemacro\j{\i + 4};
					
					\node[vertex,fill=\color,label=right:{$t_\j$}] (tt-\i) at (\i,-5) {};
					\node[vertex,fill=\color,label=right:{$s_\j$}] (ss-\i) at (\i,0) {};
					\foreach \j in {1,...,3}
					{
						\node[vertex,fill=\color] (sc-\j\i) at (\smallDist,2 * \smallDist + -\i - \j * \smallDist) {};
						\draw[dotted,thick] (sc-\j\i) -- (s-\i);
						\node[vertex,fill=\color] (tc-\j\i) at (4.7,2 * \smallDist + -\i - \j * \smallDist) {};
						\draw[dotted,thick] (tc-\j\i) -- (t-\i);
						\draw (tc-\j\i) -- (sc-\j\i);

						\node[vertex,fill=\color] (ssc-\j\i) at (\i + \j * \smallDist - 2 * \smallDist,-\smallDist) {};
						\draw[dotted,thick] (ssc-\j\i) -- (ss-\i);
						\node[vertex,fill=\color] (ttc-\j\i) at (\i + \j * \smallDist - 2 * \smallDist,-4.7) {};
						\draw[dotted,thick] (ttc-\j\i) -- (tt-\i);
						\draw (ttc-\j\i) -- (ssc-\j\i);
					}
				}

				\begin{pgfonlayer}{background}
					\foreach \i in {1,...,4} \foreach \x in {1,...,4} \foreach \j in {1,...,3} \foreach \y in {1,...,3}
					{
						\node[vertex,fill=black] at (\i + \j * \smallDist - 2 * \smallDist,2 * \smallDist + -\x - \y * \smallDist) {};
						\node[vertex,fill=black] at (\x + \y * \smallDist - 2 * \smallDist,2 * \smallDist + -\i - \j * \smallDist) {};
					}

					\foreach \i in {1,...,4} \foreach \j in {1,...,3} \foreach \y in {1,...,3}
					{
						\ifnum \j=\y%
							\pgfmathtruncatemacro\x{\i};
							\edgeInKDSPInstance
						\fi%
					}

					\foreach \i / \j / \x / \y in {
							1/3/3/3, 1/2/3/3, 3/1/4/3, 3/3/2/1, 1/3/4/1, 2/2/1/3, 1/1/4/3, 3/3/4/2, 1/2/2/2, 1/2/3/1, 2/1/4/3, 2/2/4/2, 2/3/3/1, 4/2/3/2%
						} 
					{
						\draw (v-\i\j) -- (v-\x\y);
						\edgeInKDSPInstance
					}
					\foreach[count=\i] \j in \cliqueVertices
					{
						\foreach[count=\x] \y in \cliqueVertices
						{
							\ifnum \i<\x%
								\draw[very thick] (v-\i\j) -- (v-\x\y);
								\edgeInKDSPInstance
							\fi
						}
						\draw[path] (s-\i.center) -- (sc-\j\i.center) -- (tc-\j\i.center) -- (t-\i.center);
						\draw[path] (ss-\i.center) -- (ssc-\j\i.center) -- (ttc-\j\i.center) -- (tt-\i.center);
					}
				\end{pgfonlayer}
				
			\end{scope}
			
		\end{tikzpicture}
		\caption{
			An illustration of the reduction from \textsc{Multicolored Clique} to \kDSP. \\
			\emph{Left side:} Example instance for \textsc{Multicolored Clique} with~$k=4$ colors and three vertices per color.
			A multicolored clique is highlighted (by thick edges). \\
			\emph{Right side:} The constructed instance with the eight shortest paths highlighted.
			Note that these paths are pairwise disjoint.
			Dashed edges (incident to~$s_i$ and~$t_i$ vertices) indicate paths of length~12.
			This ensures that no shortest~$s_i$-$t_i$-path contains a vertex~$s_j$ or~$t_j$ with~$i \ne j$.
		}
		\label{fig:hardness-example}
	\end{figure}
	With the pairs~$(s_a,t_a)$ and~$(s_{a+k},t_{a+k}), a \in [k]$, we ensure that for each color~$a$ a horizontal and a vertical path corresponding to a vertex of color~$a$ has to be taken into a solution. 
	If two vertices~$u$ and~$v$ cannot appear together in a clique (because they have the same color or are not adjacent), then the horizontal $u$-path and the vertical~$v$-path will intersect (so not both paths can be taken at the same time). 
	Note that the highlighted paths in the right side of \cref{fig:hardness-example} do not cross at vertices.
	The shortest path property ensures that a $s_a$-$t_a$-path cannot ``switch'' between horizontal or vertical paths in the grid.
	If there are~$2k$ disjoint shortest paths connecting~$s_a$ and~$t_a$, then the horizontal and vertical paths will not cross. 
	Thus, the corresponding vertices will form a clique in~$G$. 
	
	The details of the construction are as follows: 
	Initialize~$G'$ as the empty graph.
	For each color~$a \in [k]$ we add four vertices~$s_a$, $t_a$, $s_{k+a}$, and $t_{k+a}$ to~$V'$.
	For each vertex~$v \in V$, we add two paths~$P_v = s_{c(v)} p_v^1 p_v^2 \dots p_v^n t_{c(v)}$ and~$Q_v = s_{k+c(v)} q_v^1 q_v^2 \dots q_v^n t_{k+c(v)}$ where the~$p_v^i$ and~$q_v^i$ are new vertices.
	Afterwards, subdivide the first and last edge of each of these paths by inserting~$n$~new degree-2 vertices (indicated by the dotted lines in \cref{fig:hardness-example}).
	
	To finish the construction, we merge some vertices.
	Here, merging two vertices~$u$ and~$v$ means to add a new vertex~$w$ with~$N(w) = N(u) \cup N(v)$ and to delete~$u$ and~$v$ afterwards.
	To simplify notation, we use both names~$u$ and~$v$ for the new merged vertex~$w$.
	Fix any vertex order $V = \{v_1, \dots, v_n\}$.
	For each pair~$v_i, v_j$ with $i \neq j$, we merge~$p_{v_i}^j$ and $q_{v_j}^i$ if 
	\begin{align}
		&c(v_i) = c(v_j) \quad\text{or} \label{cond:same-color}
		\\
		&\{v_i,v_j\} \notin E. \label{cond:different-color}
	\end{align}

	It remains to show the correctness of the construction.
	We begin by observing that the following two inequalities hold for all $i, j, a$,
	as they are valid before the vertex merging	and are kept invariant by each merge step.
	\begin{align*}   
		\dist(s_a, p_{v_i}^j) &\geq n + j  &
		\dist(s_a, q_{v_i}^j) &\geq n + i
	\end{align*}
	For any vertex $v$ and $j \in [n]$, the existence of path $P_v$ proves $\dist(s_{c(v)}, p_v^j) \leq n+j$
	and thus~$\dist(s_{c(v)}, p_v^j) = n+j$.
	
	Observe that $p_{v_i}^j$ is only merged with $q_{v_j}^i$ (if at all).
	Hence, $N(p_{v_i}^j) \subseteq \set{p_{v_i}^{j-1}, p_{v_i}^{j+1}, q_{v_j}^{i-1}, q_{v_j}^{i+1}}$.
	Of these four vertices only $p_{v_i}^{j-1}$ has $\dist(s_a, p_{v_i}^{j-1}) \leq n+j-1$.
	It follows inductively that the only path of length $n+j$ connecting $p_{v_i}^j$ to any $s_a$ is a subpath of $P_{v_i}$.
	Thus, the set of shortest~\mbox{$s_{a}$-$t_{a}$-paths} in $G'$
	is exactly~$\set{P_{v}; c(v) = a}$.
	An analogous argument implies that~${\set{Q_v; c(v) = a}}$ is the set of shortest $s_{k+a}$-$t_{k+a}$-paths.
	
	Using this, we can now prove that~$G$ contains a multi-colored clique~$C \subseteq V$ of size at most~$k$ if and only if there are~$2k$ shortest and pairwise disjoint paths in~$G'$ that connect~$(s_i,t_i)_{i\in[2k]}$.
	
	``$\Rightarrow$:''
	Given the multicolored clique~$C$, we select the following paths in~$G'$: 
	For each vertex~$v \in C$, we take the two paths~$P_v$ and $Q_v$.
	As shown above these are shortest paths.
	It remains to show that they are pairwise disjoint: 
	By construction, any two selected paths~$P_{v_i}, Q_{v_j}$ overlap in some vertex if and only if Condition~\eqref{cond:same-color} or Condition~\eqref{cond:different-color} is satisfied.
	Since~$v_i, v_j \in C$, it follows that~$\{v_i,v_j\} \in E$ for~$c(v_i) \neq c(v_j)$. 
	Moreover, for~$c(v_i) = c(v_j)$, we have~$v_i = v_j$ since we take exactly one vertex per color into the clique.
	Thus, the paths are pairwise disjoint.
	
	``$\Leftarrow$:''
	Assume that $R_a$ is a shortest $s_a$-$t_a$-path for each $a \in [2k]$ and that these paths are pairwise disjoint.
	As previously shown, for each $a$ we have $R_a = P_v$ (if $a \leq k$) or~$R_a = Q_v$ (if~$a > k$) for some vertex $v$ with $c(v) = a$.
	Further observe that for any $a \in [k]$,~$R_a$ and~$R_{k + a}$ correspond to the same vertex~$v$, i.e., $R_a = P_v$ and $R_{k+a} = Q_v$,
	since otherwise Condition~\ref{cond:same-color} would imply that these two paths intersect.
	Hence, the~$2k$ paths correspond to~$k$ vertices of~$G$ and, due to Condition~\ref{cond:different-color}, it follows that these vertices form a clique in $G$.
	
	By observing that the provided reduction is a parameterized reduction and the fact that \textsc{Multicolored Clique} is W[1]-hard~\cite{CFK+15}, we obtain the first part of the theorem, that is, the W[1]-hardness of \kDSP{} with respect to~$k$.
	The second part of the theorem follows as there is no $f(k)\cdot n^{o(k)}$-time algorithm for MCC~\cite{chen2006strong} (unless the ETH fails) and the number of terminal pairs in the \textsc{$2k$-DSP} instance depends linearly on $k$.
\end{proof}

Note that all edges in the previous reduction are only ever used ``from~$s_i$ to~$t_i$''.
Thus, we can easily modify the reduction by directing all edges in this way to also show W[1]-hardness with respect to~$k$ for the directed variant.
Moreover, by replacing each vertex~$u$ by two vertices~$u^{\iin}$ and~$u^{\oout}$ where~$u^{\iin}$ is incident to all incoming arcs and~$u^{\oout}$ is incident to all outgoing arcs of~$u$ (and adding the arc~$(u^{\iin},u^{\oout})$), we also get the same hardness for the variant where we ask for edge-disjoint~\mbox{$s_i$-$t_i$-paths} instead of vertex disjoint paths.
\section{Conclusion}

We provided an improved algorithm for \kDSP.
However, while the running time of our algorithm can certainly be slightly improved by some case distinctions and a more careful analysis, the algorithm is still far from being practical.
Reducing the factor in the exponent to a polynomial in~$k$ is a clear challenge for future work.
Considering the fine-grained complexity of \DSP{2}, it would be interesting to know whether there are running time barriers based on e.\,g.\;the Strong Exponential Time Hypothesis. 

Concerning generalizations of \kDSP, we believe that we can modify our algorithm in a straight-forward way to work with positive edge-lengths.
However, the case of non-negative edge-lengths seems much more difficult.
Our basic geometric observations made in \cref{sec:2D-geometry} crucially depend on the fact that we are looking for shortest paths.
Thus, if there are no~$k$~disjoint shortest paths, then computing $k$~disjoint paths minimizing their total length in polynomial time is still an open problem for $k\geq 3$
(for~$k=2$ \citet{BH19} provided a randomized~$O(n^{11})$ time algorithm).

\bibliographystyle{plainnat}

\bibliography{bib}

\end{document}